  \newcommand{\field}[1]{\mathbb{#1}}
  \newcommand{\rbb}{\field{R}}
  \newcommand{\zbb}{\field{Z}}
\newcommand{\AC}{\mathcal{A}}
\newcommand{\DC}{{\mathcal D}}
\newcommand{\IC}{{\mathcal I}}
\newcommand{\FC}{{\mathcal F}}
\newcommand{\OC}{{\mathcal O}}
\newcommand{\UC}{{\mathcal U}}
\newcommand{\VC}{{\mathcal V}}
\newcommand{\dr}{{\mathrm d}}
\newcommand{\eps}{{\varepsilon}}
\newcommand{\gr}{{\mathrm g}}
\newcommand{\I}{\,\mathrm{i}\,}
\newcommand{\conesupp}{\mathrm{cone\,supp\,}}
\newcommand{\PsDO}{\psi\mathrm{DO}}
\newcommand{\im}{\mathrm{Im}}
\newcommand{\Op}{\mathrm{Op}}
\newcommand{\rank}{\mathrm{rank}\,}
\newcommand{\supp}{\mathrm{supp}\,}
\newcommand{\sgn}{\mathrm{sgn}\,}
\newcommand{\Tr}{\mathrm{Tr}}
\newcommand{\WF}{\mathrm{WF}\,}
  \newtheorem{theorem}{Theorem}[section]
  \newtheorem{definition}[theorem]{Definition}
  \newtheorem{lem}[theorem]{Lemma}
  \newtheorem{pro}[theorem]{Proposition}
  \newtheorem{cor}[theorem]{Corollary}
  \newtheorem{rem}[theorem]{Remark}
  \newtheorem{Assumption}[theorem]{Assumption}
  \newtheorem{exa}[theorem]{Example}
\numberwithin{equation}{section}
\begin{document}

\title[Semiclassical theory of discontinuous systems]
{The Semiclassical theory of discontinuous systems and ray-splitting billiards}

\author[D. Jakobson]{Dmitry Jakobson}
\address{Department of Mathematics and Statistics, McGill
University, 805 Sherbrooke Str. West, Montr\'eal QC H3A 0B9,
Ca\-na\-da.} \email{jakobson@math.mcgill.ca}

\author[Y. Safarov]{Yuri Safarov}
\address{
Department of Mathematics,
King's College London,
Strand, London, WC2R 2LS,
United Kingdom }
\email{yuri.safarov@kcl.ac.uk}

\author[A. Strohmaier]{Alexander Strohmaier}
\address{Department of Mathematical Sciences,  Loughborough University,  Loughborough, Leicestershire, LE11 3TU,
UK} \email{a.strohmaier@lboro.ac.uk}

\author[Y. CdV (Appendix)]{ with an Appendix by Yves Colin de Verdi\`ere}
\address{Institut Fourier, Grenoble University,  BP74,
 38402-St Martin d'H\`eres Cedex
  (France)}
\email{yves.colin-de-verdiere@ujf-grenoble.fr}
\keywords{ray splitting billiards, eigenfunction, semi-classical, quantum ergodicity}

\subjclass[2000]{Primary: 81Q50}

\date{\today}

\thanks{D.J.\ was partially supported by NSERC, FQRNT and Dawson Fellowship}

\begin{abstract}
We analyze the semiclassical limit of spectral theory on manifolds whose metrics have jump-like discontinuities. Such systems are quite different from manifolds with smooth Riemannian metrics because the semiclassical limit does not relate to a classical flow but rather to branching (ray-splitting) billiard dynamics. In order to describe this system we introduce a dynamical system on the space of functions on phase space. To identify the quantum dynamics in the semiclassical limit we compute the principal symbols of the Fourier integral operators associated to reflected and refracted geodesic rays and identify the relation between classical and quantum dynamics. In particular we prove a quantum ergodicity theorem for discontinuous systems. In order to do this we introduce a new notion of ergodicity for the ray-splitting dynamics.
\end{abstract}

\maketitle

%-------------------------------------------------------------------------------
%                       INTRODUCTION AND OVERVIEW
%-------------------------------------------------------------------------------

\section{Introduction}

Many questions about spectra and eigenfunctions of elliptic operators are motivated by  Bohr's correspondence principle in quantum mechanics, asserting that a classical dynamical  system manifests itself in the semiclassical (as Planck's constant $h\to 0$) limit  of its quantization. When the quantum system is given by the Laplacian $\Delta$ on the Riemannian manifold $M$  (describing a quantum particle on $M$ in the absence of electric and magnetic fields) the corresponding classical system is the geodesic flow $G^t$ on $M$, so in the high energy limit  
eigenfunctions should reflect the properties of the geodesic flow.  

One of the most studied question concerns limits of eigenfunctions.  To an eigenfunction $\phi_j$ with 
$\Delta\phi_j = \lambda_j\phi_j$ one can associate a measure $\dr\mu_j$ on $M$ with the density 
$|\phi_j|^2$; its phase space counterpart is a distribution $\dr\omega_j$ 
on the unit cosphere bundle $S^*M$, projecting to $\dr\mu_j$. It can be 
defined as follows: given a smooth function 
$a$ on $S^*M$ (an observable), we choose a ``quantization,'' a pseudodifferential operator $A=\Op(a)$ 
of order zero with principal symbol $a$, and let 
\begin{equation}\label{meas:def} 
\langle a,\dr\omega_j\rangle = 
\int_M (A\phi_j)(x)\,\overline{\phi_j(x)}\,\dr x:=
\langle A\phi_j,\phi_j\rangle.   
\end{equation}
The definition of the measures $\dr\omega_j$ depends of course on the choice of the quantization map $a \mapsto\Op(a)$, but any two choices differ by an operator of a lower order, and that does not affect the asymptotic  behaviour of $\langle A\phi_j,\phi_j\rangle$.  The measures $\dr\omega_j$ are sometimes called 
{\em Wigner measures.} 

A natural problem is to study the set of weak* limit points of $\omega_j$-s; it follows from Egorov's  theorem  that any limit measure 
is invariant under the geodesic flow, but the limits are quite different  for manifolds with integrable and ergodic geodesic flows.  
If $M$ has completely integrable geodesic  
flow $G^t$, sequences of $\dr\omega_j$-s concentrate on Liouville tori in phase space satisfying the quantization condition.  

Let
\begin{equation*}\label{counting_function}
N(\lambda) :=\ \#\{\lambda_j<\lambda^2\}
\end{equation*}
be the the counting function of the Laplacian (as usual, we enumerate eigenvalues taking into account their multiplicities). Recall that, by the Weyl formula,
$$
N(\lambda)\ =\ \lambda^n (2\pi)^{-n}n^{-1}\mathrm{Vol}\,(S^*M)+o(\lambda^n)\,,\qquad\lambda\to+\infty\,.
$$

If $G^t$ is ergodic, the following fundamental result (sometimes called {\em quantum 
ergodicity} theorem) holds.

\begin{theorem}\label{thm:qerg} 
Let $M$ be a compact manifold with ergodic geodesic flow, and let $A$ be a zero order pseudodifferential operator with principal symbol $\sigma_A$.  Then  
\begin{equation*}\label{qerg:fla}
\sum_{\lambda_j\leq\lambda^2}\left|\langle A
\phi_j,\phi_j\rangle-\int_{S^*M} \sigma_A\,\dr\omega\right|^2\ =\ o\left(N(\lambda)\right)\,,\quad\lambda\to+\infty,
\end{equation*}
or, equivalently,
\begin{equation*}
\lim_{N\to\infty}\frac1N\,\sum_{j=1}^N\left|\langle A
\phi_j,\phi_j\rangle-\int_{S^*M} \sigma_A\,\dr\omega\right|^2\ =\ 0
\end{equation*}
where $\dr\omega$ is the normalized canonical measure on $S^*M$.
\end{theorem}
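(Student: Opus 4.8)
The plan is to prove the classical quantum ergodicity theorem (Theorem~\ref{thm:qerg}) by the now-standard Shnirelman--Zelditch--Colin de Verdi\`ere argument, which reduces the statement to two ingredients: a local Weyl law and an Egorov theorem. First I would normalize $A$ by subtracting the constant $\bar\sigma_A := \int_{S^*M}\sigma_A\,\dr\omega$ times the identity, so that it suffices to prove $\frac1N\sum_{j=1}^N|\langle A\phi_j,\phi_j\rangle|^2\to 0$ under the assumption $\int_{S^*M}\sigma_A\,\dr\omega = 0$. The key analytic input is the local Weyl law: for a zero-order pseudodifferential operator $B$,
\begin{equation*}
\lim_{\lambda\to\infty}\frac{1}{N(\lambda)}\sum_{\lambda_j\le\lambda^2}\langle B\phi_j,\phi_j\rangle = \int_{S^*M}\sigma_B\,\dr\omega,
\end{equation*}
which follows from the Hörmander--Duistermaat--Guillemin analysis of $\Tr(B\,e^{-it\sqrt{\Delta}})$ (or a Tauberian argument applied to the smoothed spectral function), using compactness of $M$ so that the remainder in the Weyl asymptotics is $o(\lambda^n)$.

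Next I would invoke Egorov's theorem: for $A$ a zero-order $\PsDO$ and $U^t = e^{it\sqrt{\Delta}}$, the conjugate $A(t) := U^{-t} A\, U^t$ is again a zero-order $\PsDO$ with principal symbol $\sigma_A\circ G^t$, where $G^t$ is the geodesic flow on $S^*M$. Averaging in time, set $\langle A\rangle_T := \frac1T\int_0^T A(t)\,\dr t$; this is a zero-order $\PsDO$ with principal symbol $\frac1T\int_0^T \sigma_A\circ G^t\,\dr t$. Since $U^t$ commutes with $\Delta$, we have $\langle \langle A\rangle_T\phi_j,\phi_j\rangle = \langle A\phi_j,\phi_j\rangle$ for every eigenfunction, so replacing $A$ by $\langle A\rangle_T$ does not change the diagonal matrix elements but improves the symbol. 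Applying the local Weyl law to $B = \langle A\rangle_T^* \langle A\rangle_T$ (itself a zero-order $\PsDO$ with principal symbol $|\sigma_{\langle A\rangle_T}|^2$) and using the elementary inequality $|\langle A\phi_j,\phi_j\rangle|^2 = |\langle\langle A\rangle_T\phi_j,\phi_j\rangle|^2 \le \langle \langle A\rangle_T^*\langle A\rangle_T\phi_j,\phi_j\rangle$ (Cauchy--Schwarz, since $\|\phi_j\|=1$) gives
\begin{equation*}
\limsup_{N\to\infty}\frac1N\sum_{j=1}^N|\langle A\phi_j,\phi_j\rangle|^2 \le \int_{S^*M}\left|\frac1T\int_0^T\sigma_A\circ G^t\,\dr t\right|^2\dr\omega.
\end{equation*}
Finally, ergodicity of $G^t$ together with the von Neumann / Birkhoff ergodic theorem forces the time averages $\frac1T\int_0^T\sigma_A\circ G^t\,\dr t$ to converge in $L^2(S^*M,\dr\omega)$ to the space average $\bar\sigma_A = 0$ as $T\to\infty$; letting $T\to\infty$ in the bound above closes the argument. (The equivalence of the two displayed forms in the statement is the elementary fact that a nonnegative sequence is Cesàro-$o(1)$ iff its partial sums are $o(N)$, via the monotonicity of $N(\lambda)$ and the Weyl law relating $N$ and $\lambda$.)

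The main obstacle, and the only point requiring genuine care, is the passage from the Cauchy--Schwarz bound on a subsequence to the full Cesàro statement: one must extract a density-one subsequence independently of $T$ and observables. The clean way is the standard diagonal argument — fix a countable dense family of observables, control each along a density-one set, and intersect — combined with the observation that the bound is uniform in $T$ in the right sense. In the smooth setting this is classical; I flag it here because the paper's real contribution is the \emph{discontinuous} case, where the geodesic flow $G^t$ is replaced by branching ray-splitting dynamics, Egorov's theorem must be reformulated in terms of Fourier integral operators attached to reflected and refracted rays with the principal symbols computed in the body of the paper, and the notion of ergodicity itself has to be adapted; Theorem~\ref{thm:qerg} as stated is the smooth prototype whose proof scheme we will mirror there.
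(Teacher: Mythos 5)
Your proposal reproduces the classical Shnirelman--Zelditch--Colin de Verdi\`ere argument correctly, and it should be noted that the paper itself does not prove Theorem~\ref{thm:qerg}: it is stated as background and attributed to the references \cite{CV,HMR,Shn74,Shn93,Zelditch,Zelditch:96}. So there is nothing in the paper to compare it against at the level of that statement.

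It is worth recording, however, that the paper's proof of its main result (Theorem~\ref{theo:main}, the ray-splitting analogue) has exactly the skeleton you describe --- normalize so $\int\sigma_A\,\dr\omega=0$, time-average to get $\tilde A_T$, Cauchy--Schwarz in the form $|\langle A\phi_j,\phi_j\rangle|^2 \le \langle \tilde A_T^*\tilde A_T\phi_j,\phi_j\rangle$, local Weyl law, then let $T\to\infty$ via ergodicity --- but with one step replaced. Egorov's theorem fails across the metric discontinuity, so $\tilde A_T$ is not a $\PsDO$ and one cannot compute the principal symbol of $\tilde A_T^*\tilde A_T$ directly. The paper instead writes the relevant operators (after a microlocal cut-off $\Op(\chi)\,A^*A_s\,B$ confining attention to non-grazing, non-dead-end trajectories) as finite sums of FIOs via the parametrix of Theorem~\ref{t:parametrix}, and applies a local Weyl law for FIOs (Theorem~\ref{t:local-fio} together with Corollary~\ref{c:local-fios} and Lemma~\ref{l:thetas}) that keeps track of the Maslov index factors $i^{\Theta_\Phi}$; the resulting leading coefficient is an average $\Xi^{\mathrm d}_s(\sigma_A)$ under the branching ``diagonal'' transfer operator rather than the pullback $\sigma_A\circ G^t$. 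In short: you use Egorov plus a local Weyl law for $\PsDO$s; the paper uses a local Weyl law directly at the FIO level, which is what survives the discontinuity.

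One remark on the last paragraph of your proposal. You flag as the ``main obstacle'' a passage from a bound on a subsequence to the Ces\`aro statement, requiring a diagonal argument over a countable family of observables. That is not needed here. The argument you gave already yields, for each fixed $T$,
\begin{equation*}
\limsup_{N\to\infty}\frac1N\sum_{j=1}^N|\langle A\phi_j,\phi_j\rangle|^2 \;\le\; \int_{S^*M}\Bigl|\frac1T\int_0^T\sigma_A\circ G^t\,\dr t\Bigr|^2\dr\omega,
\end{equation*}
with no subsequence extraction, and sending $T\to\infty$ uses the von Neumann ergodic theorem for a \emph{single} fixed observable. The Ces\`aro statement of Theorem~\ref{thm:qerg} is for a fixed $A$, so it follows directly. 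The diagonal/density-one argument is only needed for the stronger reformulation that a single density-one subsequence of eigenfunctions equidistributes simultaneously for all observables, which is a corollary of the theorem, not a step in its proof. This does not invalidate your argument (the bound itself is proved correctly), but the remark misidentifies where the difficulty lies.
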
 

The theorem shows that for a subsequence $\phi_{j_k}$ of eigenfunctions of the full density, $\dr\omega_{j_k}\to\dr\omega$, and after projecting to $M$ we find that $\phi_{j_k}^2\to 1$ (weak$^*$). In other words, almost all high energy eigenfunctions become equidistributed on the manifold and in phase  space.  
Various versions of Theorem \ref{thm:qerg} were proved in \cite{CV,HMR,Shn74,Shn93,Zelditch,Zelditch:96}
by Shnirelman, Zelditch, Colin de Verdi\`ere and Helffer--Martinez--Robert, as well as in other papers.  
Quantum ergodicity has been established for billiards in \cite{GL,ZZ}
 by G\'erard--Leichtnam and Zelditch--Zworski.  

Important further questions concern the rate of convergence in \eqref{qerg:fla}, quantum analogues 
of mixing and entropy.  Rudnick and Sarnak conjectured that on negatively curved manifolds, 
the conclusion of Theorem \ref{thm:qerg} holds {\em without averaging}, or equivalently $d\omega_j\to d\omega$ for {\em all} eigenfunctions; this is sometimes called {\em quantum unique ergodicity} (QUE).  This conjecture has been proved for some arithmetic hyperbolic 
manifolds by Lindenstrauss, with further progress by Soundararajan and Holowinsky.  
On the other hand, A. Hassell (\cite{Ha}) has shown that on the Bunimovich stadium billiard, there exist 
{\em exceptional} sequences of eigenfunctions concentrating on the ``bouncing ball'' orbits, so the 
analogue of the QUE conjecture does not hold for all billiards.  

The aim of this paper is to identify the correct semiclassical dynamics corresponding to quantum systems with discontinuities. More precisely, we are looking at manifolds (possibly with boundary) whose metrics are allowed to have jump discontinuities across codimension one hypersurfaces. Such manifolds model 
situations in physics where waves propagate in matter that consists of different layers  of materials. In the simplest case we would have two isotropic materials touching at a hypersurface. The metric in each layer is then given by $g_i=n_i(x)^2 g_e$, where $n_i(x)$ is the refraction index in layer $i$ and $g_e$ is the Euclidian metric on the tangent bundle. Wave propagation in these media is described by the wave equation
$$
 (\frac{\partial^2}{\partial t^2} + \Delta) \phi(x,t) =0, 
$$
where $\Delta$ is the Laplace operator with respect to the metric $g_i$
and transmissive boundary conditions are imposed on the solutions. The high energy limit of such a systems shows properties that do not remind of classical mechanics: singularities of solutions travel on geodesics until they hit the discontinuity. Then they are reflected and refracted according to the laws of geometrical optics. Consequently, there is no classical flow on phase space that describes the high energy limit.  Moreover, the naive generalization of Egorov's theorem fails in this situation. If $A$ is a pseudodifferential operator on the manifold supported away from the discontinuity then the quantum mechanical time-evolution $U(t) A U(-t)$ of $A$ will in general fail to be a pseudodifferential operator. Thus, on the algebraic level of observables the quantum-classical correspondence fails. We will show that after forming an average over the eigenstates  the quantum dynamics relates to a certain probabilistic dynamics that takes into account the different branches of geodesics emerging in this way. 
Our main result establishes a quantum ergodicity theorem in the case where this classical dynamics is ergodic.

Our proof relies on a precise symbolic calculus for Fourier integral operators associated with canonical transformations (see Section \ref{s:FIO})
and on a local Weyl law for such operators (Theorem \ref{t:local-fio}). We  construct a local parametrix for the wave kernel consisting of a sum of such Fourier integral operators (Section \ref{t:parametrix}) and apply to them the above results.

The usual proof of quantum ergodicity is based on the consideration of the positive operator obtained by squaring the average of the time-evolution of a pseudodifferential operator. Egorov's theorem plays an important role in this construction. Since it does not hold in our setting, the standard proof cannot be directly applied.
Instead we use the local Weyl law for an operator that is not necessarily positive but whose expectation value with respect to any eigenfunction is positive. We then apply the symbolic calculus to obtain an explicit formula for the leading asymptotic coefficient.

The proof of the main result is presented in full detail in Section \ref{s:ergodicity}. It should be mentioned that 
local Weyl laws of the type given in Theorem \ref{t:local-fio} are very powerful and have many applications. 
In particular, a less explicit but similar result was recently stated and used in
\cite{TothZelditchI} and \cite{TothZelditchII} to prove quantum ergodic restriction theorems.

Ray splitting not only occurs in the quantum systems we consider but 
also happens in situations described by systems of partial differential equations
and higher order equations. Moreover, ray-splitting occurs in a natural way in
quantum graphs. Our results carry over in a straightforward manner to these situations.

%%%%%%%

Ray-splitting billiards have been studied extensively in the Physics literature, see e.g. 
\cite{BYNK, BAGOP1, BAGOP2, BKS, COA, KKB,TS1, TS2} and references therein.  The emphasis has been on spectral statistics, trace formulae, eigenfunction localization 
(``scarring''), and the behaviour of periodic orbits.  
In the mathematical literature, the emphasis has been on the propagation of singularities 
\cite{Iv1} and spectral asymptotics \cite{Iv2, Sa2}.  

Quantum ergodicity has not previously been considered for ray-splitting (or branching) billiards.  
Our results fill an important gap in the semiclassical theory of such systems.  It is our hope 
that our results will serve as a motivation to further study a probabilistic dynamical system 
that we introduce.

%%%%%%

\section{Setting: Manifolds with metric discontinuities along hypersurfaces}\label{s:setting}

Following Zelditch and Zworksi \cite{ZZ},
we say that $M$ is a compact manifold with piecewise smooth Lipschitz boundary if $M$ is a compact subset of a smooth
manifold $\tilde M$ such that there exists a finite collection of smooth functions $f_1,\ldots,f_\ell$ such that
\begin{enumerate}
 \item $df_j |_{f_j^{-1}(0)} \not= 0$,
 \item $M$ has Lipschitz boundary and $f_i^{-1}(0) \bigcap f_j^{-1}(0)$
  is an embedded submanifold of $\tilde M$.
 \item $M = \{ x \in \tilde M \,:\, \forall 1 \leq j \leq \ell : f_j(x) \geq 0\}$.
\end{enumerate}
A Riemannian metric on $M$ is assumed to be the restriction of a Riemannian metric on $\tilde M$.
We will denote the regular part of the boundary $\partial M$ of $M$ by $\partial M_{\mathrm{reg}}$
and the singular part by $\partial M_{\mathrm{sing}}$.

Suppose that $X$ is a compact manifold with piecewise smooth Lipschitz boundary and 
$Y \subset X$ is a co-dimensional one piecewise smooth closed hypersurface in $X$
such that $\partial Y \subset \partial X$. We will also assume that the completion
of $X \backslash Y$ with respect to the inherited uniform structure 
is again a manifold $M$ with piecewise smooth Lipschitz boundary. Thus, cutting $X$ open along
$Y$ results in $M$ and we obtain a part $N$ of the boundary $\partial M$ with a two-fold covering
map $N \to Y$.

\begin{figure}[H]
 \includegraphics*[width=5cm,height=4cm]{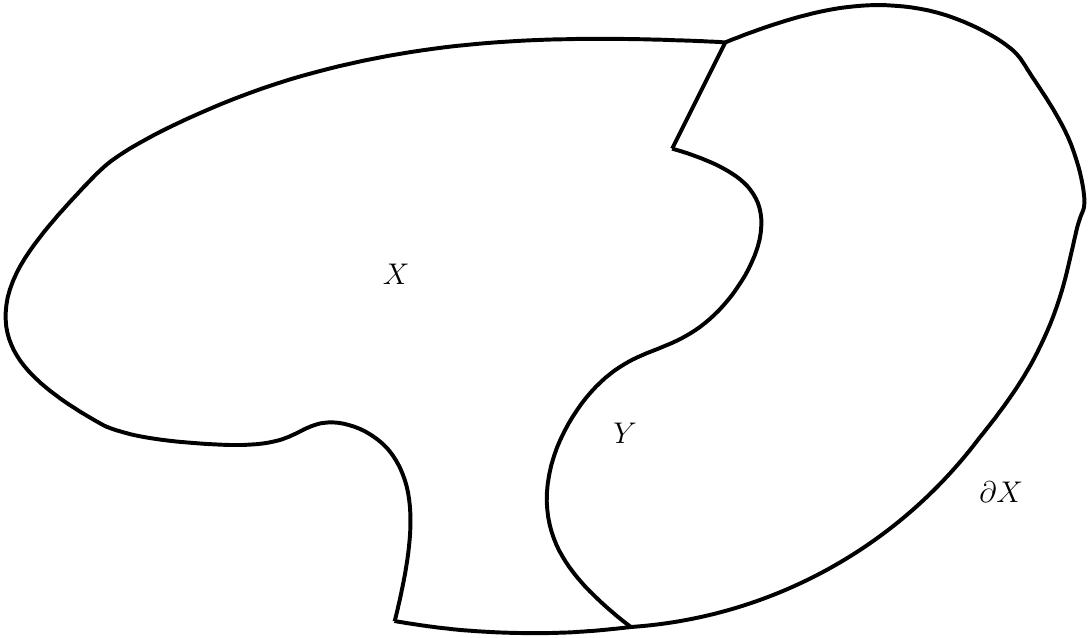} \hspace{1cm}
 \includegraphics*[width=5cm,height=4cm]{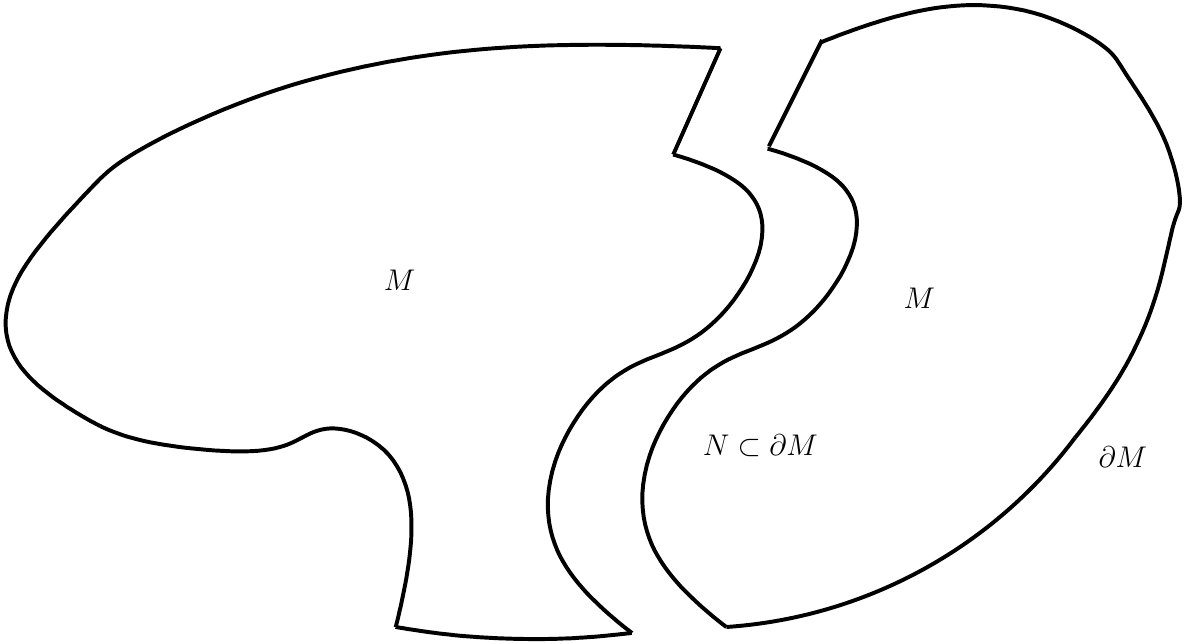}
 \caption{Domain in $\mathbb{R}^2$ with boundary} \label{fig1}
\end{figure}

\begin{exa}
In the simplest case $X$ is oriented, $Y$ is a closed hypersurface that separates
$X$ into two parts $X_1$ and $X_2$. Then $M$ will be the disjoint union of
$\overline{X_1}$ and $\overline{X_2}$, $N$ is the disjoint union of two copies of $Y$,
and the deck transformation 
simply interchanges these two copies of $Y$.
\end{exa}

\begin{figure}[H]
 \includegraphics*[width=5cm,height=2cm]{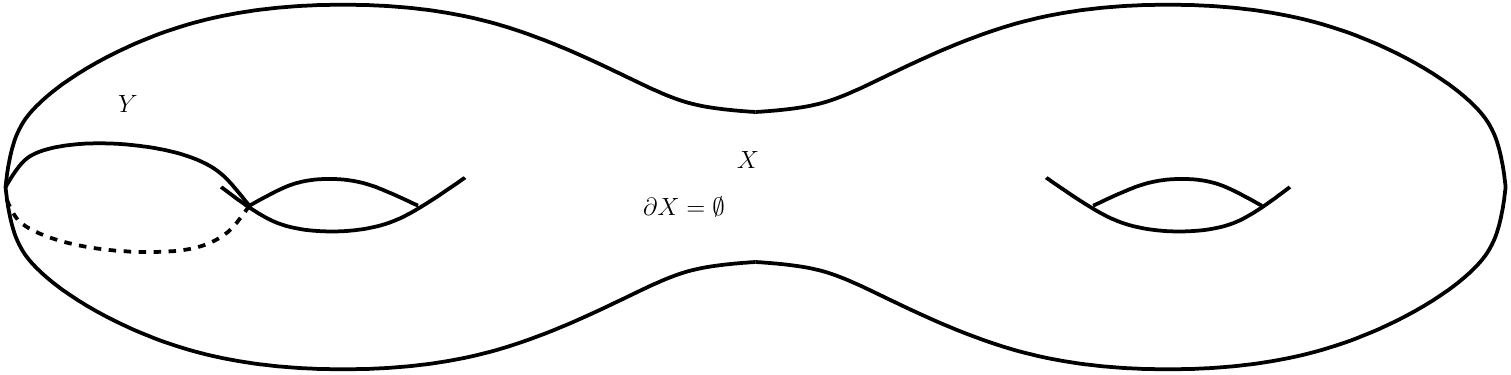} \hspace{1cm}
 \includegraphics*[width=5cm,height=2cm]{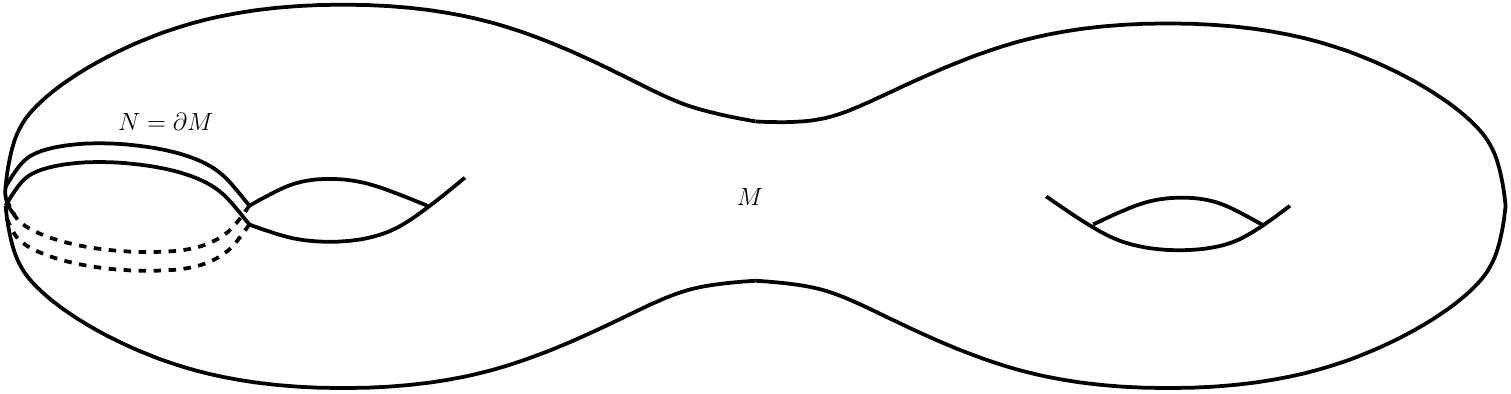}
 \caption{Closed manifold with metric discontinuity} \label{fig2}
\end{figure}

A smooth metric on $M$ defines a metric on
$X \backslash Y$. Since we do not require that the covering map $\phi$ be an isometry, this metric
can in general not be continued to a metric on $X$ but has a jump discontinuity at $Y$.
Manifolds $X$ with a metric of this form on $X \backslash Y$ can be thought of as
Riemannian manifolds with metric jump discontinuities at $Y$. Note that the construction
ensures that the metric can be continued smoothly up to $Y$ on either side of $Y$
in local coordinates (although the continuations from the left and from the right need not coincide).

The gluing construction defines a map $T^* M |_{N} \to T^*X |_Y$
which is again a two-fold covering map that lifts the original covering map.
Note that the deck transformation of this cover does not in general preserve the length of covectors.

Since $N$ has zero measure, functions in $L^p(M)$ can also be understood
as functions in $L^p(X)$. Moreover, functions in $C^\infty(M)$ can be understood
as functions in $L^\infty(X)$ which are smooth away from $Y$ and can have a jump discontinuity
along $Y$. For the sake of notational simplicity, in the following we will not distinguish
between the spaces $L^p(M)$ and $L^p(X)$ and understand $C^\infty(M)$ as a subspace
of $L^\infty(X)$.

We will assume that $D$ is either the Sobolev space
$H^1(X)$  or the space $H^1_0(X_{\mathrm{int}})$ of $H^1$-functions vanishing on $\partial X$. We define the Laplace
operator $\Delta$ on $X$ as the self-adjoint operator
defined by the Dirichlet quadratic form
\begin{equation}\label{q-form}
 q(\phi,\phi)=\int_{X \backslash Y} |\nabla\phi(x) |^2\,\mathbf g(x)\,\dr x
\end{equation}
with domain $D$, where $\mathbf g$ is the standard Riemannian density, $\mathbf g(x)\,\dr x$ is the volume element and
$|\nabla\phi(x)|$ is the Riemannian norm of the covector $\nabla\phi$. Contraction of the density $\mathbf{g}$ with the outward pointing 
unit normal vector field results in a density $\mathbf{h}$ on $N$. There then exists a positive function $b \in L^\infty(N) \cap C^\infty(N_\mathrm{reg})$ such that
$ {\mathbf h}(x) =  b(x) \widehat{\mathbf h}(x)$ for all $x \in N$, where $\widehat{\mathbf h}$ denotes the deck transformation
of the density $\mathbf{h}$ on $N$.

The domain of the unique self-adjoint operator generated by this quadratic form
coincides with
\begin{gather*}
\bigl\{f \in H^2(M) \bigcap H^1(X) \;:\; \\ \langle\widehat{\nabla f},n_N(x)\rangle = -  \langle\nabla f,n_N(x)\rangle \cdot b(x)
\textrm{ for all } x \in N,\   f|_{\partial X}=0 \bigr\}
\end{gather*}
in the case $D=H^1_0(X_{\mathrm{int}})$, and with
\begin{gather*}
\bigl\{f \in H^2(M) \bigcap H^1(X) \;:\; \\ \langle\widehat{\nabla f},n_N(x)\rangle = - \langle\nabla f, n_N(x)\rangle \cdot b(x)\textrm{ for all } x \in N,\  \langle\nabla  f,n_{\partial X}\rangle=0 \bigr\}
\end{gather*}
in the case $D=H^1(X)$.

In both formulae $n_N$ and $n_{\partial X}$ are the outward pointing unit normal vector fields at $N_{\mathrm{reg}}$ and $\partial X_{\mathrm{reg}}$
respectively, and $\widehat{\nabla f}$ is the deck transformation of $\nabla f$. Thus, in the former case the operator is subject to Dirichlet boundary conditions at $\partial X$
and transmissive boundary condition at $N$. In the latter case the operator is subject to Neumann boundary
conditions at $\partial X$ and transmissive boundary conditions at $N$. 

To be more specific and for notational purposes
we would like to describe the transmissive boundary condition in local coordinates near $Y_{\mathrm{reg}}$.
Suppose that $x_0 \in Y_{\mathrm{reg}}$ is a point.
Locally the normal bundle of $Y_{\mathrm{reg}}$
is trivial and, therefore, there exists a connected open neighbourhood $\OC$ of $x_0$ such 
that $\OC \backslash Y$ is the disjoint
union of two connected components $\OC_+$ and  $\OC_-$. 
After making such a choice, for each point $x \in Y \cap \mathcal{O}$ there are two points $x_+$ and $x_-$ in $N$
that correspond to $x$ under the covering $N \to Y$. These points $x_\pm$ can be thought of as the different limits in $M$
that sequences have when converging to $x$ in $\mathcal{O}_+$ and $\mathcal{O}_-$ respectively.
For any function $f(x)$ on $M$ we can then define its two boundary values $f_+(x):=f(x_+)$ and $f_-(x):=f(x_-)$.
The deck transformation group will act by interchanging the points $x_+$ and $x_-$. Similarly, different metrics
$\{g_+^{ij}(x)\}$ and $\{g_-^{ij}(x)\}$ on $T_x^*X$ are obtained by passing to the limit
in $\OC_+$ and  $\OC_-$ respectively. 
Let $g_\pm(x,\xi)$ be the corresponding quadratic forms, and let $\mathfrak{n}^\pm$ be the $g_\pm$-unit 
conormal vectors oriented into the $g_\pm$-sides.
In the same way let $h$ be the induced Riemannian
metric on $N_{\mathrm{reg}}$. Then $h_+(x)=h(x_+)$ and $h_-(x)=h(x_-)$ correspond to the different Riemannian metrics on 
$Y_{\mathrm{reg}}$ that are induced by the different limits from left and right hand side respectively.
The induced metric densities are then given by $\mathbf{(h}_\pm)(x) = \left( \mathrm{det}((h_\pm)_{ij}(x)) \right)^{1/2}$ and the function
$b$ is computed as $b_+(x)=b(x_+)=\mathbf{h}_+(x) \left( \mathbf{h}_-(x) \right)^{-1}$. By construction $b_-(x)=b(x_-)=(b_+(x))^{-1}$.
Elements $f$ in the domain of $\Delta$ then satisfy the transmissive boundary conditions
\begin{gather*}
 f_+(x) = f_-(x),\\
 b_+(x) \partial_n f_+(x) = -\partial_n f_-(x),
\end{gather*}
for all $x \in  Y_{\mathrm{reg}} \cap \OC$, where $\partial_n f_\pm$ is the exterior normal derivative
of $f$ at the point $x_\pm$.

Let $H^s(M)$ be the Sobolev space. Since these boundary conditions are elliptic, we have
$$
 \mathrm{dom}(\Delta^{s/2}) \subset H^s(M)
$$
for any $s>0$. Moreover, if $s>n/2+k$ then
$$
 \mathrm{dom}(\Delta^{s/2}) \subset C^k(M) \bigcap C(X).
$$

For technical reasons, it is more convenient to consider operators acting in the space of half-densities on $X$ rather than in the space of functions. Further on, if $\mathcal{H}(\cdot)$ 
is a function space, we shall denote by $\mathcal{H}(\cdot,\Omega^{1/2})$ the corresponding space of half-densities.

The operator ${\mathbf g}^{1/2}\,\Delta\,{\mathbf g}^{-1/2}$ with domain
$$
\{f \in L^2(X,\Omega^{1/2}) \,:\, {\mathbf g}^{-1/2}f\in \mathrm{dom}(\Delta) \}
$$
is said to be the {\it Laplacian in the space of half-densities.} Clearly, it is a self-adjoint operator 
whose spectrum coincides with the spectrum of $\Delta$. Moreover, $f$ is an eigenfunction of $\Delta$ if 
and only if the half-density ${\mathbf g}^{1/2} f$ is an eigenvector of ${\mathbf g}^{1/2}\,\Delta\,{\mathbf g}^{-1/2}$  
corresponding to the same eigenvalue.

We shall denote the Laplacian in the space of half-densities by the same letter $\Delta$ specifying, 
when necessary, in what space we consider the operator.

\section{Ray-splitting billiards}\label{s:billiards}

\subsection{Reflection and refraction}\label{s:billiards1}

As above let $\{g^{ij}(x)\}$ be the Riemannian metric on $T_x^*X$ and  
$g(x,\xi):=\sum_{i,j=1}^ng^{ij}(x)\,\xi_i\,\xi_j$. 
For local statements near $Y$ it will be enough to consider open sets 
$\OC$ as in the previous section and a choice $\OC_\pm$ as above so that we can use the above notation
$\{g_\pm^{ij}(x)\}$ for the different boundary values of the metric.
If $(y,\eta)\in T^*(X \backslash\partial M)$, where $y\in X \backslash\partial M$ and $\eta\in T^*_yX$, let denote by $(x^t(y,\eta),\xi^t(y,\eta))$ the Hamiltonian  trajectory generated by the Hamiltonian $\sqrt{g(x,\xi)}$ and starting at 
$(x^0(y,\eta),\xi^0(y,\eta)):=(y,\eta)$. Its projection $x^t$ onto $X$ is the geodesic emanating from the point $y$ in 
the direction $\eta$. Clearly, $x^t$ and $\xi^t$ are positively homogeneous functions of $\eta$ of degree 0 and 1 respectively. Furthermore, if  $(y,\tilde\eta)\in S^*M$ then $(x^t(y,\tilde\eta),\xi^t(y,\tilde\eta))\in S^*M$ and the geodesic $x^t$ is parametrised by its length. 

The Hamiltonian  trajectory $(x^t,\xi^t)$ is well defined until the geodesic $x^t$ hits the boundary $\partial M$, i.e., either the set
$Y$ or the boundary of $X$. In the former case, according to the laws of geometrical optics, it splits into two geodesics. 
One of them is obtained by reflection, and the other is the refracted trajectory which goes through $Y$ but changes its direction. In the latter case there is only the reflected trajectory. 
More precisely, there are the following possibilities.

Assume, for the sake of definiteness, that the trajectory 
$$
\gamma_+(t)=(x^t(y,\eta),\xi^t(y,\eta))
$$ 
approaches $Y$ from the $g_+$-side and meets $Y_{\mathrm{reg}}$ at the time $t^*$. Let 
$$
\lim_{t\to t^*-0}(x^t(y,\eta),\xi^t(y,\eta))\ :=\ (x^*,\xi^*)\,,
$$ 
so that $x^*\in Y_{\mathrm{reg}}$ and $g_+(x^*,\xi^*)=g(y,\eta)$. Denote by $\xi_Y^\pm$ the $g_\pm$-orthogonal projections 
of $\xi^*$ onto the cotangent space $T_{x^*}^*Y$.

Clearly, $\xi^*=\xi_Y^+-\tau_+\,\mathfrak{n}^+$ where 
$$
\tau_+\ =\ \sqrt{g_+(x^*,\xi^*)-g_+(x^*,\xi_Y^+)}\,.
$$ 
By definition, 
the {\it reflected} trajectory is the Hamiltonian trajectory $\tilde\gamma_+(t)$ originating from the point 
$(x^*,\xi_Y^++\tau_+\,\mathfrak{n}^+)$ at the time $t^*$ and going into the $g_+$-side of $X$. 
If $\tau_+=0$ then the geodesic $x^t$ hits $Y_{\mathrm{reg}}$ at zero angle. In this case
the reflected trajectory is not well defined.

Assume that $g_-(x^*,\xi_Y^-)<g_+(x^*,\xi^*)$ and denote 
$$
\tau_-\ =\ \sqrt{g_+(x^*,\xi^*)-g_-(x^*,\xi_Y^-)}\,. 
$$
Obviously,
$g_-(x^*,\xi_Y^-\pm\tau_-\mathfrak{n}^-))=g_+(x^*,\xi^*)$. The Hamiltonian trajectory $\tilde\gamma_-(t)$ originating from the 
point $(x^*,\xi_Y^-+\tau_- \mathfrak{n}^-)$ at the time $t^*$ and going into the 
$g_-$-side is called the {\it refracted} trajectory. Note that in this case $\tilde\gamma_-(t)$ 
is the reflection of the trajectory $\gamma_-(t)$ coming from the $g_-$-side to the point $(x^*,\xi_Y^--\tau_- \mathfrak{n}^-)$.
The corresponding refracted trajectory coincides 
with $\tilde\gamma_+(t)$, so that  $\gamma_+(t)$ and $\gamma_-(t)$ have the same pair of reflected and refracted trajectories.

If $g_-(x^*,\xi_Y^-)>g_+(x^*,\xi^*)$ then there is no refraction. In this case one says that $(x^*,\xi^*)$ is a point of 
{\it total reflection.} If $g_-(x^*,\xi_Y^-)=g_+(x^*,\xi^*)$ then the angle of refraction is zero and 
the refracted trajectory may not be well defined.

If the geodesic hits a point $x^*\in\partial X_{\mathrm{reg}}$ then $(x^t,\xi^t)$ is reflected in the same way as above. Namely, the reflected trajectory is the Hamiltonian trajectory originating from $(x^*,\xi^*_{\partial X}-\xi^*_n)$ where $\xi^*_{\partial X}$ and $\xi^*_n$ are tangential and normal components of $\xi^*$.

\subsection{Billiard trajectories}\label{s:billiards2}
The trajectory obtained by consecutive reflections and/or refractions is called a billiard trajectory. In general, there are infinitely many billiard trajectories originating from a given point 
$(y,\eta)\in T^*(X \backslash\partial M)$; moreover, the set of these trajectories is typically uncountable. 
We shall denote them by $(x^t_\kappa,\xi^t_\kappa)$, where $\kappa$ is an index specifying 
the type of trajectory (see below). Each billiard trajectory $(x^t_\kappa,\xi^t_\kappa)$ 
consists of a collection of geodesic segments $(x^t_{\kappa,j},\xi^t_{\kappa,j})$, which are joined at $Y$ and $\partial X$.

Following \cite{Sa1}, we shall suppose that $\kappa$ is a ternary fraction 
$$0.\kappa_1\kappa_2\ldots,$$ where $\kappa_m=0$ or $\kappa=2$ for all $m$, so 
that $\kappa$ is a point of the Cantor set in $[0,1]$. More precisely, we 
say that the trajectory has type $\kappa$ if the following is true.
\begin{itemize}
\item If the $(m+1)$st segment of the trajectory is obtained by reflection and there exists the 
corresponding refracted ray then $\kappa_m=0$.
\item  if the $(m+1)$st segment of the trajectory is obtained by refraction then $\kappa_m=2$,
\item  If the trajectory has only $m$ segments or the $m$th segment ends at $\partial X$ or at a point of total reflection then either $\kappa_m=0$ or $\kappa_m=2$.
\end{itemize}
The last condition implies that a billiard trajectory may have different types $\kappa$. Roughly speaking, the equality $\kappa_m=2$ means that $(m+1)$st segment is obtained by refraction whenever it is possible.

\begin{figure}[H]
 \includegraphics*[width=10cm,height=5cm]{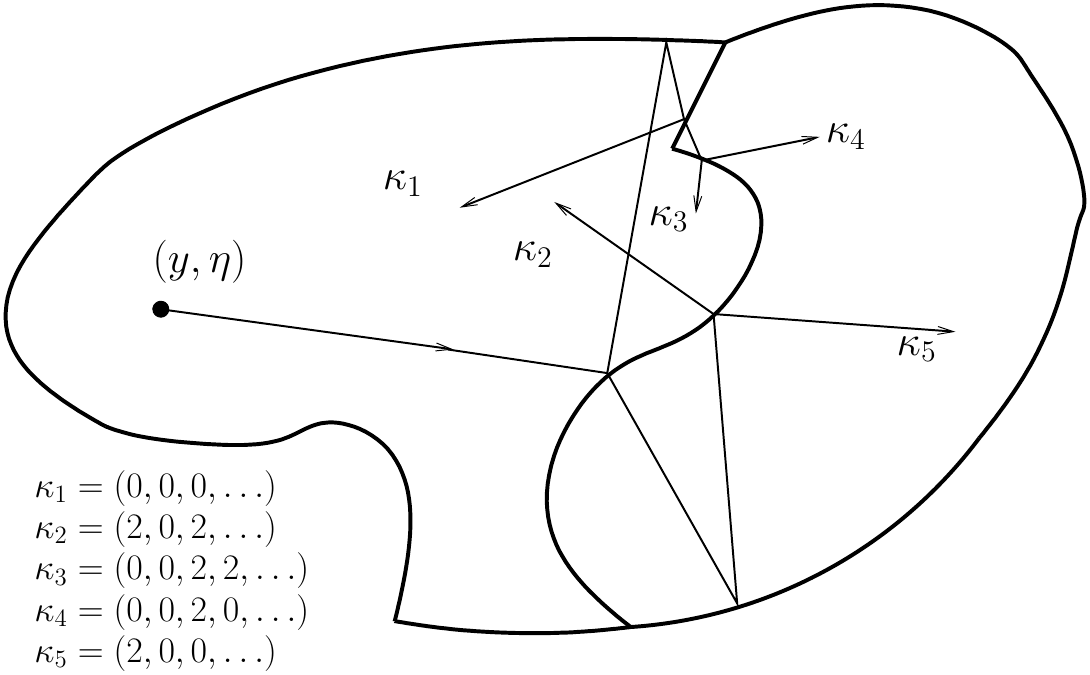} \hspace{1cm}
 \caption{Ray-splitting trajectories} \label{fig3}
\end{figure}

\subsection{Dead-end and grazing trajectories}\label{s:billiards3}
A billiard trajectory is not well-defined if
\begin{itemize}
\item the trajectory hits $\partial M$ infinitely many times in a finite time,
\item or the angle of incidence or the angle of refraction is equal to zero,
\item or the trajectory hits a point in $\partial M_{\mathrm{sing}}$.
\end{itemize}
Trajectories of the first type are called {\it dead-end}, trajectories of the second type are said to 
be {\it grazing}, and we call trajectories of the third kind {\it singular}. 
Let $O_\dr$, $O_\gr$, and $O_\mathrm{s}$ be the sets of starting points of the dead-end, 
grazing and singular trajectories, respectively. Clearly, $O_\dr$, $O_\gr$, and $O_\mathrm{s}$ are conic subsets of $T^*(X\setminus\partial M)$. Throughout the paper we shall suppose that the following assumption holds.

\begin{Assumption}\label{a:bad-points}
The conic set $O_\dr\bigcup O_\gr \bigcup O_\mathrm{s}$ has measure zero in the cotangent bundle.
\end{Assumption}

This assumption means the set of starting points of ``bad'' trajectories is of measure zero or, in other words, that the billiard trajectories $(x^t_\kappa(y,\eta),\xi^t_\kappa(y,\eta))$ are well-defined for all $\kappa$, all $t\geq0$ and almost all $(y,\eta)\in T^*X$.

\begin{rem}
One can easily show that $O_\gr\bigcup O_\mathrm{s}$ is a set  of measure zero (see, for instance \cite{Sa1}, \cite{SV1} or \cite{SV2}). However, there are reasons to believe that $O_\dr\bigcap S^*M$ may have a positive measure. 
In \cite{SV1} the authors constructed such an example for a similar branching billiard.
\end{rem}

Let $\OC_{T}$ be the set of points $(y,\eta)\in T^*(X\setminus\partial M)$ such that all the billiard trajectories $(x^t_\kappa(y,\eta),\xi^t_\kappa(y,\eta))$ are well defined for $t\in[0,T]$ and $x^T_\kappa(y,\eta)\not\in\partial M$. Clearly,  $\OC_T$ is an open conic subset of $T^*(X\setminus\partial M)$. Assumption \ref{a:bad-points} implies that their intersection $\OC_\infty:=\bigcap_{T>0}\OC_T$ is a set of full measure in $T^*X$.

Note that the  mapping 
$\Phi^t_\kappa:(y,\eta)\mapsto(x^t_\kappa,\xi^t_\kappa)$ defined on $\OC_T$ is a homogeneous canonical transformation in the sense in symplectic geometry
for each fixed $t\in[t,T]$ and $\kappa$. It preserves the canonical symplectic 1-form 
$\xi\cdot{\mathrm d}x$ on $T^*X$ and 
the standard measures on $T^*X$ and $S^*M$ (see, for instance, \cite{Sa1} or \cite{SV2}).

\section{Fourier integral operators}\label{s:FIO}

Further on we shall abbreviate the words `Fourier integral operator' and `pseudodifferential operator'  to FIO and $\PsDO$, respectively. We shall always be assuming that their symbols and amplitudes belong to H\"ormander's classes $S^m_{\mathrm{phg}}$ (see, for instance, \cite[Chapter 18]{H2}). Recall that the conic support $\,\conesupp p\,$ of a function $\,p\in S^m_{\mathrm{phg}}\,$ is defined as the closure of the union $\bigcup_j\supp p_j$, where $p_j$ are the positively homogeneous functions appearing in the asymptotic expansion $p\sim\sum_j{p_j}$.

\subsection{Definition}\label{FIO:def}
Let $\Phi:(y,\eta)\mapsto(x^*(y,\eta),\xi^*(y,\eta))$ be a smooth homogeneous canonical transformation in $T^*M$ defined on an open conic set $\DC(\Phi)\subset T^*M$, and let $V:C_0^\infty(M,\Omega^{1/2})\mapsto C^\infty(M,\Omega^{1/2})$ be an operator with Schwartz kernel $\VC(x,y)$ (that is, $Vu(x)=\langle\VC(x,\cdot),u(\cdot)\rangle$).

The operator $V$
is said to be a FIO of order $m$ associated with $\Phi$ if  $\VC(x,y)$ can be written as an oscillatory integral of the form
\begin{equation}\label{integral}
(2\pi)^{-n}\int_{T^*_yM} e^{i\varphi(x,y,\eta)}p(y,\eta)\,\left|\det\varphi_{x\eta}(x,y,\eta)\right|^{1/2}\,
\varsigma(x,y,\eta)\,d\eta
\end{equation}
modulo a half-density from $C^\infty(M\times M,\Omega^{1/2})$. Here $\varsigma$, $\varphi$ and $p$ are smooth functions on $M\times T^*M$ satisfying the following conditions.
\begin{enumerate}
\item[{\bf(a$_1$)}]
$\varsigma$ is an arbitrary cut-off function, which is positively homogeneous of degree 0 for large $\eta$, is identically equal to 1 in a small neighbourhood of the set $\{x=x^*(y,\eta)\}$ and vanishes outside another small neighbourhood of the set $\{x=x^*(y,\eta)\}$.
\item[{\bf(a$_2$)}]
$p$ is an amplitude from a class $S^m_{\mathrm{phg}}$ with $\conesupp p\subset\DC(\Phi)$;
\item[{\bf(a$_3$)}]
$\varphi$ is positively homogeneous in $\eta$ of degree 1 with  $\im\,\varphi\geq0$.
\item[{\bf(a$_4$)}]
$\varphi(x,y,\eta)=(x-x^*)\cdot\xi^*+O\left(|x-x^*|^2\right)$ as $x\to x^*$.
\item[{\bf(a$_5$)}]
$\det\varphi_{x\eta}(x,y,\eta)\ne0$ for all $(x,y,\eta)\in\supp\varsigma$ and $x=x^*$ is the only solution of
the equation $\varphi_\eta(x,y,\eta)=0$ on $\supp\varsigma$.
\end{enumerate}

Note that 
\begin{itemize}
\item 
$\varphi_\eta(x^*,y,\eta)=0$ because $\Phi$ preserves the 1-form $x\cdot\dr\xi$. Therefore the condition {\bf(a$_5$)} is fulfilled whenever $\det\varphi_{x\eta}(x^*,y,\eta)\ne0$ and $\supp\varsigma$ is small enough.
\item
The right hand side of \eqref{integral} behaves as a half-density with respect to $x$ and $y$ and, consequently, the corresponding operator acts in the space of half-densities;
\end{itemize}

\begin{rem}\label{r:fio-def1}
The above definition of a FIO was introduced in \cite{LSV} (see also \cite[Chapter 2]{SV2}). It is equivalent to the traditional one, which is given in terms of local real-valued phase functions parametrizing the Lagrangian manifold 
\begin{equation}\label{lagr-manifold}
\{(y,\eta;x,\xi)\in\DC(\Phi)\times T^*M\,:\,(x,\xi)=(x^*(y,\eta),\xi^*(y,\eta))\}
\end{equation}
(see, for example, \cite{H1} or \cite{Tr}).
\end{rem}

\begin{rem}\label{r:fio-def2}
By \cite[Theorem 1.8]{LSV}, the Schwartz kernel of a FIO can be represented by an integral of the form \eqref{integral} with any phase function $\varphi$ and 
cut-off function $\varsigma$ satisfying the above conditions.
\end{rem}

\begin{rem}\label{r:fio-def3}
One can define a FIO using \eqref{integral} with an amplitude $\tilde p(x,y,\eta)\in S^m_{\mathrm{phg}}$ depending on $x$ instead of  $p(y,\eta)$. 
These two definitions are equivalent. Indeed, since $(x-x^*)e^{i\varphi}=B\,\nabla_\eta\,e^{i\varphi}$ with some smooth matrix-function $B$, one can always
remove the dependence on $x$ by expanding $\tilde p$ into Taylor's series at the point $x=x^*$, replacing $(x-x^*)e^{i\varphi}$ with $B\,\nabla_\eta\,e^{i\varphi}$ 
and integrating by parts. In particular, this procedure shows that \eqref{integral} with an $x$-dependent amplitude $\tilde p(x,y,\eta)$ defines an infinitely 
smooth half-density whenever $p\equiv0$ in a conic neighbourhood of the set $\{x=x^*\}$.
\end{rem}

One can find all homogeneous terms in the expansion of the amplitude $p(y,\eta)$ by analysing asymptotic behaviour of the Fourier transforms of localizations 
of the distribution \eqref{integral}. This implies that $p(y,\eta)$ is determined modulo a rapidly decreasing function by the FIO and the phase function $\varphi$. 
It is not difficult to show that the conic support $\conesupp p$ does not depend on the choice of $\varphi$ and is determined only by the FIO $V$ itself 
(see, for instance,  \cite[Section 2.7.4]{SV2}). We shall denote it by $\conesupp V$.

\begin{rem}\label{r:fio-def4}
By \cite[Corollary 2.4.5]{SV2}, if a phase function satisfies {\bf(a$_3$)}, {\bf(a$_4$)} and
the matrix $\mathrm{Im} \varphi_{xx}(x^*,y,\eta)$ is positive definite then $\,\det\varphi_{x\eta}(x^*,y,\eta)\ne0$. One can deduce from this that
the set of phase functions $\varphi$ satisfying {\bf(a$_3$)}--{\bf(a$_5$)} is connected and simply connected.
\end{rem}

\subsection{The Keller--Maslov bundle}

Let $\mathcal{D}_{\mathbb{Z}}(\Phi)$ be the $\mathbb{Z}$-principal bundle over $\mathcal{D}(\Phi)$ on which the multivalued function
$\arg{\det}^2\varphi_{x\eta}(x^*,y,\eta)$ becomes a single valued continuous function of $y,\eta$ depending continuously on $\varphi$. The fibre at the point $(y,\eta)$ can be thought of as the set of equivalence classes of pairs $(\varphi,a)$,
where $\varphi$ is a phase function satisfying {\bf(a$_3$)}--{\bf(a$_5$)}, $a$ is an integer, and the equivalence relation is
$(\varphi,a) \sim (\tilde \varphi, \tilde a)$ iff  
$$
 2 \pi ( a - \tilde a) - \arg{\det}^2\varphi_{x\eta}(x^*,y,\eta) + \arg{\det}^2\tilde \varphi_{x\eta}(x^*,y,\eta) \in (-\pi, \pi],
$$
where the branch of the argument in the right hand side is chosen to be continuous along any path in the set of phase functions satisfying {\bf(a$_3$)}--{\bf(a$_5$)}.
Then $\arg{\det}^2\varphi_{x\eta}(x^*,y,\eta)$ can be defined as a continuous single valued function on the principal bundle
$\mathcal{D}_{\mathbb{Z}}(\Phi)$ determined by
$$
 \arg{\det}^2\varphi_{x\eta}(x^*,y,\eta,[(\varphi,a)]) \in (-\pi + a, \pi + a ].
$$
Locally real phase functions provide local trivializations of $\mathcal{D}_{\mathbb{Z}}(\Phi)$ determining the  topology on the total space.

In the following we will often suppress the argument $[(\varphi,a)]$ and think of the function $\arg{\det}^2\varphi_{x\eta}(x^*,y)$
as a multivalued function on $\mathcal{D}(\Phi)$ understood as a continuous single valued function on $\mathcal{D}_{\mathbb{Z}}(\Phi)$.
Factoring out $4 \,\mathbb{Z} \subset \mathbb{Z}$ one obtains a $\mathbb{Z}_4$-principal bundle which we denote
by $\mathcal{D}_{\mathbb{Z}_4}(\Phi)$.

The complex line bundle associated with the representation 
$$
\mathbb{Z}_4 \to \mathbb{C}^{\times}, \quad a \mapsto i^{-a}
$$ 
is called the Keller--Maslov line bundle. Our definition here is equivalent to the one given in the literature 
(see, for instance, \cite{H1} and \cite{Tr}) because 
if $\varphi_{x\eta}(x^*,y,\eta)$ and $\tilde \varphi_{x\eta}(x^*,y,\eta)$ are real then $(\varphi,a) \sim (\tilde \varphi, \tilde a)$
is equivalent to 
$$
 a - \tilde a = -\frac{1}{2} \left( \sgn\varphi_{\eta \eta}(x^*,y,\eta) - \sgn\tilde\varphi_{\eta \eta}(x^*,y,\eta) \right).
$$
Sections of the Keller--Maslov line bundle can be understood as functions $f(y,\eta,\mathfrak{a})$ on 
$\mathcal{D}_{\mathbb{Z}_4}(\Phi)$ satisfying the equivariance condition
\begin{gather} \label{equivariance}
 f(y,\eta,\mathfrak{a}+n)=i^{-n} f(y,\eta,\mathfrak{a}),
\end{gather}
where $\mathfrak{a}=[(\phi,a)]$ denotes the variable in the fibre on which the $\mathbb{Z}$-action is defined by
$[(\phi,a)]+n = [(\phi,a+n)]$.
For the purpose of this article we will think of them in this way.

The situation simplifies when the bundle $\mathcal{D}_{\mathbb{Z}}(\Phi)$ is topologically trivial. This is equivalent to the existence a branch of $\arg(\det^2\varphi_{x\eta}(x^*,y,\eta))$ which is continuous on the set $\DC(\Phi)\,$. Clearly, such a branch exists whenever $\DC(\Phi)$ is simply connected.

\subsection{The index function $\Theta_\Phi$}

Let $C=C_1+iC_2$ be a symmetric $n\times n$-matrix with a nonnegative (in the sense of operator theory) real part $C_1$, and let $\Pi_C$ be the orthogonal 
projection on $\ker C$. We shall denote $\det_+C=\det(C+\Pi_C)$. Furthermore, we define the function $\arg{\det}_+C$ in such a way that it is continuous with 
respect to $C$ on the set of matrices with a fixed kernel and is equal to zero when $C_2=0$. In particular, if $C_1=0$ then $\arg\det_+C=\frac\pi2\,\sgn C_2$ where $\sgn C_2$ 
is the signature of $C_2$ (see \cite[Section 3.4]{H2}).

The following is \cite[Proposition 2.3]{LSV}.

\begin{pro}\label{p:index}
The  function 
\begin{multline}\label{index1}
\Theta_\Phi(y,\eta)\ =\ \frac1{2\pi}\,\arg{\det}^2\varphi_{x\eta}(x^*,y,\eta)\\
-\frac1\pi\,\arg{\det}_+(\varphi_{\eta\eta}(x^*,y,\eta)/i)\;+\;\frac12\,\rank x^*_\eta(y,\eta)
\end{multline}
is integer-valued and,
as a function on $\mathcal{D}_{\mathbb{Z}}(\Phi)$, does not depend on $\varphi$  and local coordinates. 
The function $\Theta_\Phi$ is continuous along any path on which $\rank x^*_\eta$ is constant.
\end{pro}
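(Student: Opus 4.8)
The plan is to verify directly that the right-hand side of \eqref{index1} is insensitive to the two sources of ambiguity in its definition: (i) the choice of local coordinates on $M$ near $x^*$, and (ii) the choice of phase function $\varphi$ satisfying {\bf(a$_3$)}--{\bf(a$_5$)}. Once both invariances are established, continuity along paths with $\rank x^*_\eta$ constant follows because on such a path one can choose a continuous family of admissible phase functions (the set of such $\varphi$ is connected by Remark \ref{r:fio-def4}) and continuous local coordinates, and each of the three terms is then manifestly continuous: $\arg\det{}^2\varphi_{x\eta}$ is continuous by the very construction of $\mathcal{D}_{\mathbb{Z}}(\Phi)$, $\arg\det_+(\varphi_{\eta\eta}/i)$ is continuous on strata where $\ker\varphi_{\eta\eta}$ has fixed dimension (and $\rank x^*_\eta$ constant pins down that dimension via the identity in step 2), and $\rank x^*_\eta$ is constant by hypothesis.

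First I would handle coordinate invariance. Under a change of coordinates $x\mapsto \tilde x$ near $x^*$ with Jacobian $J = \partial\tilde x/\partial x$ at $x^*$, and the dual change $\eta \mapsto {}^tJ^{-1}\eta$ upstairs, the mixed Hessian transforms as $\tilde\varphi_{\tilde x\eta} = {}^tJ^{-1}\varphi_{x\eta}\,(\text{stuff})$ up to factors whose determinant is a positive power times $(\det J)^{\pm1}$; hence $\det^2\tilde\varphi_{\tilde x\eta}$ and $\det^2\varphi_{x\eta}$ differ by $(\det J)^{2}\cdot(\det\,\partial\eta/\partial\tilde\eta)^2$, a manifestly positive real factor, so $\arg\det{}^2$ is unchanged. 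The term $\arg\det_+(\varphi_{\eta\eta}/i)$ is already coordinate-free since $\eta$ lives in a fixed fiber and $\varphi_{\eta\eta}$ is evaluated on the set $\{\varphi_\eta=0\}$. Finally $\rank x^*_\eta$ is intrinsic. So only the phase-function dependence is substantive.

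The core of the argument is invariance under change of $\varphi$. Here I would appeal to the stationary-phase / Morse-lemma machinery of \cite{LSV} (equivalently \cite[Chapter 2]{SV2}): two admissible phase functions $\varphi$ and $\tilde\varphi$ parametrize the same Lagrangian \eqref{lagr-manifold}, so by the equivalence-of-phase-functions theorem they are related, after adding extra fiber variables and an affine change, by a quadratic form $Q$ whose signature is governed by the difference of the Hessian data. The key linear-algebra identity is that the combination
\[
\tfrac1{2\pi}\arg{\det}^2\varphi_{x\eta} - \tfrac1\pi\arg{\det}_+(\varphi_{\eta\eta}/i)
\]
changes, under $\varphi\rightsquigarrow\tilde\varphi$, by exactly $\tfrac12\big(\rank x^*_\eta(\varphi) - \rank x^*_\eta\big)$ — but $\rank x^*_\eta$ is the same number computed either way, so in fact the two-term combination is $\varphi$-independent once we add back $\tfrac12\rank x^*_\eta$. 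Concretely: the rank of $x^*_\eta$ equals $n$ minus the dimension of the projection of the Lagrangian's vertical tangent space, and this dimension is read off from $\ker\varphi_{\eta\eta}(x^*,y,\eta)$ via the relation $x^*_\eta = -\varphi_{x\eta}^{-1}\varphi_{\eta\eta}|_{\{\varphi_\eta=0\}}$ restricted appropriately; combining the Maslov-type index count for $\arg\det{}^2\varphi_{x\eta}$ (which jumps by multiples of $\pi$ as eigenvalues of a real part cross zero) with the signature jumps of $\arg\det_+(\varphi_{\eta\eta}/i)$ yields cancellation up to the rank correction. This is precisely the content packaged as \cite[Proposition 2.3]{LSV}, so in the write-up I would state that the computation reduces to that reference and indicate the identity $x^*_\eta = -\varphi_{x\eta}^{-1}\varphi_{\eta\eta}$ (valid on $\{\varphi_\eta = 0\}$, following by differentiating $\varphi_\eta(x^*(y,\eta),y,\eta)\equiv0$ in $\eta$) as the bridge between the third term and the first two.

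The main obstacle is the bookkeeping of branches in the non-real (complex phase) case: when $\varphi_{xx}$ has nontrivial imaginary part, $\det^2\varphi_{x\eta}$ need not be real and $\varphi_{\eta\eta}/i$ need not have zero real part, so one cannot simply use signatures — one must track $\arg\det_+$ continuously through the homotopy between $\varphi$ and $\tilde\varphi$, using the connectedness and simple-connectedness from Remark \ref{r:fio-def4} to guarantee the homotopy exists and that the total change in each multivalued argument is well-defined (not just well-defined mod $2\pi$). Managing that the individual terms are genuinely continuous along a $\rank x^*_\eta$-constant path, rather than merely locally constant mod corrections, is where the care lies; everything else is the standard symplectic/Morse-theoretic computation.
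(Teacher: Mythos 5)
The paper itself offers no proof of Proposition \ref{p:index}: it is stated verbatim as \cite[Proposition 2.3]{LSV} and the burden of proof is passed to that reference. Your sketch ultimately does the same — you assemble the ingredients and then write ``This is precisely the content packaged as \cite[Proposition 2.3]{LSV}'' — so at the level of logical dependence your argument and the paper's coincide. Your surrounding commentary (the role of the identity $\varphi_{\eta x}\,x^*_\eta + \varphi_{\eta\eta} = 0$ relating $\rank x^*_\eta$ to $\rank\varphi_{\eta\eta}$, the need to track $\arg\det_+$ continuously through a homotopy of phase functions via the connectedness from Remark \ref{r:fio-def4}, the caution about complex phases) is a reasonable unpacking of what that reference supplies.

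One local misstatement worth fixing: you claim that $\arg\det_+(\varphi_{\eta\eta}/i)$ is ``already coordinate-free since $\eta$ lives in a fixed fiber.'' That is not the right reason. A change of local coordinates on $M$ acts on the fiber variable by the covariant linear map $\eta\mapsto J(y)^T\tilde\eta$, so $\varphi_{\eta\eta}$ transforms by the real congruence $\varphi_{\eta\eta}\mapsto J(y)\,\varphi_{\eta\eta}\,J(y)^T$, which is certainly not the identity. The conclusion is still correct, but for a different reason: one checks that $\arg\det_+(JCJ^T)=\arg\det_+(C)$ for any symmetric $C$ with $\Re C\ge0$ and any real invertible $J$ (e.g.\ by writing $\arg\det_+(\tilde C)=\lim_{\epsilon\to0^+}\arg\det(\tilde C+\epsilon P)$ for any positive definite $P$, then taking $P=JJ^T$ and using $\det(JCJ^T+\epsilon JJ^T)=(\det J)^2\det(C+\epsilon I)$). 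Without some such argument the coordinate-invariance of the middle term is not immediate. Also note the transpose: from differentiating $\varphi_\eta(x^*(y,\eta),y,\eta)\equiv0$ one gets $x^*_\eta=-(\varphi_{\eta x})^{-1}\varphi_{\eta\eta}=-\left(\varphi_{x\eta}^{T}\right)^{-1}\varphi_{\eta\eta}$, which is what you want for the rank equality, but is not literally $-\varphi_{x\eta}^{-1}\varphi_{\eta\eta}$.
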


\begin{definition}\label{d:index}
If $\gamma: [a,b] \to \DC(\Phi)$ is a path in $\DC(\Phi)$ let $\tilde \gamma:[a,b] \to \DC_{\mathbb{Z}}(\Phi)$ be any continuous lift.  
$\Theta_\Phi(\tilde\gamma(b)) - \Theta_\Phi(\tilde\gamma(a))$ is independent of the lift and called the Maslov index of $\gamma$.
\end{definition}

The above definition was introduced in \cite{LSV} (the idea goes back to \cite{Ar}). If the path is closed then it
coincides with the Maslov index defined in \cite{H2} via a \v{C}ech cohomology class associated with parametrizations of the Lagrangian manifold \eqref{lagr-manifold} by 
families of local real-valued phase functions.  The index function $\Theta_\Phi$ allows one to extend this notion to non-closed paths.

\subsection{The principal symbol of a FIO}\label{FIO:symbol}

Choosing $\varsigma$ with a sufficiently small support, we can rewrite \eqref{integral} in the form 
\begin{equation}\label{integral1}
(2\pi)^{-n}\int_{T^*_yM} e^{i\varphi(x,y,\eta)}q(y,\eta)\,\left(\mathrm{det}^2\,\varphi_{x\eta}(x,y,\eta)\right)^{1/4}\,
\varsigma(x,y,\eta)\,d\eta\,,
\end{equation}
where $q$ is the section of the Keller--Maslov bundle obtained from the amplitude 
$\tilde q(x,y,\eta)=p(y,\eta)\,e^{-\frac i4(\arg({\det}^2\varphi_{x\eta}(x,y,\eta))}$ by the procedure described in Remark \ref{r:fio-def3}. Clearly, $q$ belongs to the same class $S^m_{\mathrm{phg}}$ and 
\begin{equation}\label{symbol-1}
q_0(y,\eta)\ =\ p_0(y,\eta)\,e^{-\frac i4(\arg({\det}^2\varphi_{x\eta}(x^*,y,\eta))}\,,
\end{equation}
where $q_0$ and $p_0$ are the leading homogeneous terms  of the amplitudes $q$ and $p$.

By construction, $\left(\mathrm{det}^2\,\varphi_{x\eta}(x,y,\eta)\right)^{1/4}$ is well defined for 
$x$ sufficiently closed to $x^*$ as a continuous function on $\mathcal{D}_{\mathbb{Z}_4}(\Phi)$ satisfying the equivariance condition
\begin{gather} \label{equivariance2}
 f(y,\eta,\mathfrak{a} + n)=i^{n} f(y,\eta,\mathfrak{a}),
\end{gather}
where $\mathfrak{a}$ is a variable for the fibre of $\mathcal{D}_{\mathbb{Z}_4}(\Phi)$.
Therefore it is a section in the dual of the Keller--Maslov bundle. Since the product of 
$q(y,\eta)\left(\mathrm{det}^2\,\varphi_{x\eta}(x,y,\eta)\right)^{1/4}$ is single valued, 
$q(y,\eta)$ is a section of the Keller--Maslov line bundle. We shall call it a {\it full symbol} of the corresponding FIO.

The following is \cite[Theorem 2.7.11]{SV2}.

\begin{theorem}\label{t:symbol}
Let $\,V\,$ be a FIO 
whose Schwartz kernel is given by \eqref{integral1}. Then the leading homogeneous term $q_0$ of the amplitude $\,q\,$
is uniquely determined by the operator $\,V\,$. 
\end{theorem}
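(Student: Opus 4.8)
The statement to prove is that the leading homogeneous term $q_0$ of the full symbol of a FIO $V$ (represented in the form \eqref{integral1}) depends only on $V$, not on the choices made in the oscillatory integral representation — the phase function $\varphi$ and the cut-off $\varsigma$. The natural route is to reduce the claim to something already extracted in the preceding discussion: we have just noted that the homogeneous terms of the amplitude $p(y,\eta)$ in \eqref{integral} are recovered, modulo rapidly decreasing functions, by analysing the asymptotics of the Fourier transforms of localizations of the kernel $\VC(x,y)$. The only remaining ambiguity is in the choice of $\varphi$, and the point of passing from $p$ to $q$ via the Keller--Maslov twist \eqref{symbol-1} is precisely to absorb that ambiguity. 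So the proof is: (i) fix a reference local parametrization and show $p_0$ is determined once $\varphi$ is fixed; (ii) show that changing $\varphi$ changes $p_0$ exactly by the factor $e^{\frac i4\left(\arg\det{}^2\varphi_{x\eta}(x^*,y,\eta)-\arg\det{}^2\tilde\varphi_{x\eta}(x^*,y,\eta)\right)}$, so that $q_0$, being a section of the Keller--Maslov bundle rather than a scalar function, is invariant.

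First I would fix $(y,\eta)$ and a small conic neighbourhood, and use the stationary-phase / Fourier-transform argument already alluded to after Remark \ref{r:fio-def4}: applying $V$ to a suitable family of test half-densities concentrated near $y$ with oscillation $e^{i\langle\cdot,\mu\rangle}$ and taking $\mu\to\infty$ along the ray determined by $\eta$, the leading asymptotics of $V$ acting on this family isolates $p_0(y,\eta)$ together with the factor $\left|\det\varphi_{x\eta}(x^*,y,\eta)\right|^{1/2}$ and a stationary-phase Hessian contribution coming from $\varphi_{\eta\eta}(x^*,y,\eta)$. Condition \textbf{(a$_5$)} guarantees the stationary point $x=x^*$ is nondegenerate and unique on $\supp\varsigma$; condition \textbf{(a$_4$)} pins the phase at the stationary point to $(x-x^*)\cdot\xi^*$, hence the exponential factor in the test computation is governed by the canonical relation \eqref{lagr-manifold} alone; and an integration-by-parts argument (as in Remark \ref{r:fio-def3}) shows the contribution of $\supp\varsigma$ away from $\{x=x^*\}$ is smoothing, so $\varsigma$ itself drops out. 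This gives: for fixed $\varphi$, the product $q_0(y,\eta)\left(\det{}^2\varphi_{x\eta}(x^*,y,\eta)\right)^{1/4}$ equals an explicit quantity built from $V$ and the canonical relation — in particular it is independent of $\varsigma$ and of lower-order data in $p$ and $\varphi$.

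Next I would compare two admissible phase functions $\varphi$ and $\tilde\varphi$. By Remark \ref{r:fio-def4} the space of phase functions satisfying \textbf{(a$_3$)}--\textbf{(a$_5$)} is connected and simply connected, so it suffices to track how the extracted data transforms along a path from $\varphi$ to $\tilde\varphi$; equivalently, one invokes \cite[Theorem 1.8]{LSV} (Remark \ref{r:fio-def2}) which already says the kernel of a FIO can be rewritten with any admissible phase, and one only needs to identify the effect on the leading amplitude term. The change of phase multiplies the amplitude $\tilde p_0$ relative to $p_0$ by a unimodular factor whose argument is $\tfrac14\bigl(\arg\det{}^2\varphi_{x\eta}-\arg\det{}^2\tilde\varphi_{x\eta}\bigr)$ evaluated at $x^*$, together with a compensating change in the Maslov/Hessian index encoded in Proposition \ref{p:index} — and the definition \eqref{symbol-1} of $q_0$ is exactly rigged so that these cancel, i.e. $q_0$ transforms as a section of the Keller--Maslov line bundle. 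Combined with step one, $q_0$ is thereby a well-defined section depending only on $V$.

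The main obstacle is the second step: making the bookkeeping of the phase-dependent unimodular factors precise, so that the branch choices in $\left(\det{}^2\varphi_{x\eta}\right)^{1/4}$ and in $\arg\det_+(\varphi_{\eta\eta}/i)$ match up across the whole (simply connected) family of admissible phases rather than just infinitesimally. This is precisely the content of Proposition \ref{p:index} and the equivariance conditions \eqref{equivariance}, \eqref{equivariance2}; the honest work is to verify that the stationary-phase Hessian factor produced in step one is exactly the one that converts the scalar ambiguity into the line-bundle equivariance, with no leftover sign. Since this is \cite[Theorem 2.7.11]{SV2} and the ingredients (the parametrization independence of $\Theta_\Phi$, the $\mathrm{Op}$-to-kernel correspondence, \cite[Theorem 1.8]{LSV}) are all available, the argument is a matter of assembling them carefully rather than introducing a new idea.
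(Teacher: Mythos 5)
Your proposal is correct and, as far as one can compare, follows the same route as the proof the paper is implicitly relying on. The paper itself does not prove Theorem~\ref{t:symbol}: it states it as ``the following is \cite[Theorem~2.7.11]{SV2}'' (with Remark~\ref{r:proof3} adding that the argument in \cite{SV1} for billiards extends to the general case), so there is no in-house proof to compare against. Your two-step scheme --- (i) for fixed admissible $\varphi$ extract $p_0$ from $V$ by localizing the kernel and applying stationary phase, so that $p_0(\det^2\varphi_{x\eta})^{1/4}$ is recovered from $V$ and $\varphi$ alone, and (ii) compare two admissible phases using \cite[Theorem~1.8]{LSV} (Remark~\ref{r:fio-def2}), the simple connectedness of the space of phases (Remark~\ref{r:fio-def4}), and the $\varphi$-independence of $\Theta_\Phi$ (Proposition~\ref{p:index}) to show that the twist in \eqref{symbol-1} exactly cancels the phase-dependent unimodular factor --- is precisely the Laptev--Safarov--Vassiliev argument encoded in the identities \eqref{symbol-1} and \eqref{sigma-theta}. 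The one place you wave your hands, namely matching the stationary-phase Hessian index $\arg\det_+(\varphi_{\eta\eta}/i)$ against the branch of $\arg\det^2\varphi_{x\eta}$ along a path of phases, is exactly the content of \eqref{index1}--\eqref{sigma-theta}, and you correctly flag it as the point where the honest bookkeeping lives rather than glossing over it, so there is no genuine gap.
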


\begin{rem}\label{r:proof2}
By Theorem {\rm\ref{t:symbol}}, the leading homogeneous term $q_0$ does not depend on the choice of local 
coordinates and does not change when 
we change the phase function $\varphi$ in the representation \eqref{integral1}.
\end{rem}

\begin{rem}\label{r:proof3}
Theorem {\rm\ref{t:symbol}} was proved in \cite{SV1} only for canonical transformations associated with billiards. 
However, the same proof works in the general case.
\end{rem}

In the following we will denote the leading homogeneous term $q_0$ of the symbol of a FIO $V$ by $\sigma_V$ and 
think of it as a multivalued function
on $\mathcal{D}(\Phi)$ or as a single valued function on $\mathcal{D}_{\mathbb{Z}_4}(\Phi)$ respectively, 
that satisfies (\ref{equivariance}).
Note that the product $i^{\Theta_\Phi}\sigma_V$ is single valued by construction as
\begin{equation}\label{sigma-theta}
 i^{\Theta_\Phi}\sigma_V \ =\ i^{(\rank x^*_\eta)/2-(\arg{\det}_+(\varphi_{\eta\eta}/i)\pi}\,p_0
\end{equation}
where $p_0$ is the leading homogeneous term of the amplitude $p$ from \eqref{integral} and 
$\,\arg{\det}_+(\varphi_{\eta\eta}/i)\,$ is evaluated at $x=x^*(y,\eta)$.

\begin{exa}\label{e:symbol}
If $\Phi$ is the identity transformation then $\varphi_{x\eta}(x^*,y,\eta)\equiv I$ and the corresponding 
FIO $\,V\,$ is a $\PsDO$ (see, for example, \cite[Theorem 19.1]{Sh}). 
If we put  $\arg(\det^2\varphi_{x\eta})\equiv0$ then the principal symbol of $\,V\,$ 
(in the sense of the theory of pseudodifferential operators) coincides with the leading homogeneous term $q_0$.
\end{exa}

If the bundle $\mathcal{D}_{\mathbb{Z}_4}(\Phi)$ is trivial 
we can globally fix a branch of $$\arg({\det}^2\varphi_{x\eta}(x,y,\eta)$$ for a fixed phase function.
Note, that in general there will be no preferred branch. In case 
$V$ is a $\PsDO$ we shall however suppose that $\arg(\det^2\varphi_{x\eta})\equiv 0$, 
so that in this case $\sigma_V$ coincides with the traditional principal symbol.

\subsection{Symbolic calculus for FIOs}\label{calculus}

In what follows, in order to avoid `boundary effects' when considering compositions of various FIOs and $\PsDO$s on $M$, we shall have to assume that supports of their 
Schwartz kernels are separated from the boundary. Namely, we shall deal the following classes of operators.
\begin{itemize}
\item $\AC'_0$ is the class of operators $V$ whose Schwartz kernels $\VC(x,y)$ vanish in a neighbourhood of the set $\partial M\times M$.
\item $\AC''_0$ is the class of operators $V$ whose Schwartz kernels $\VC(x,y)$ vanish in a neighbourhood of the set $M\times\partial M$.
\item $\AC_0:=\AC'_0\bigcap\AC''_0$.
\item $\AC$ is the class of operators that can be written in the form $cI+A_0$ where $A_0\in\AC_0$. 
\end{itemize}

The following results are in principle well-known. However, we need explicit formulae for the principal symbols, 
which are not obvious (partly, due to the fact that there are many 
possible definitions of the symbol). Therefore we have included a simple direct proof of Theorem 
\ref{t:composition} in Appendix \ref{s:a}.
In the rest of this section $\mathfrak{b} = \mathfrak{b}(y,\eta,\mathfrak{a}_1,\mathfrak{a}_2)$ or $\mathfrak{b} = \mathfrak{b}(y,\eta,\mathfrak{a})$ will denote
a map between the fibres of the $\mathbb{Z}_4$-bundles that depends only on the involved canonical transformations.
A construction of this map is implicitly contained in the proofs of the statements but is not needed for our purposes.

\begin{theorem}\label{t:composition}
Let $V_j$ be FIOs of order $m_j$ associated with canonical transformations $\Phi_j$, where $j=1,2$. 
Assume that either $V_1\in\AC'_0$ or $V_2\in\AC'_0$. 
Then the composition $V_2^*V_1$ is a FIO of order $m_1+m_2$ associated with the canonical 
transformation $\Phi=\Phi_2^{-1}\circ\Phi_1$ with principal symbol equal to
$\,\sigma_{{V_2}^*{V_1}} (y,\eta,\mathfrak{b}) = 
\sigma_{V_1}(y,\eta,\mathfrak{a}_1)\,\overline{\sigma_{V_2}(\Phi(y,\eta),\mathfrak{a}_2)}\,$  such that
$$
\conesupp(V_2^*V_1)\subset\left(\conesupp V_1\,\bigcap\,\Phi^{-1}(\conesupp V_2)\right).
$$
\end{theorem}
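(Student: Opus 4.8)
The plan is to compute the Schwartz kernel of $V_2^*V_1$ directly from the oscillatory-integral representations \eqref{integral1} of the two factors and read off the leading symbol. First I would write $\VC_j(x,y)$ for the kernel of $V_j$ in the form \eqref{integral1} with phase functions $\varphi_j$, amplitudes $q_j\in S^{m_j}_{\mathrm{phg}}$ and cut-offs $\varsigma_j$ supported near $\{x=x^*_j(y,\eta)\}$; the kernel of $V_2^*$ is then $\overline{\VC_2(y,x)}$. Since one of the factors lies in $\AC'_0$, the product of cut-offs is compactly supported away from $\partial M$, so the composition
\[
\KC(x,y)\ =\ \int_M \overline{\VC_2(z,x)}\,\VC_1(z,y)\,\dr z
\]
is a well-defined oscillatory integral in the variables $z,\eta_1,\eta_2$ with phase $\varphi_1(z,y,\eta_1)-\overline{\varphi_2(z,x,\eta_2)}$ and an amplitude that is a product of the $q_j$, the quarter-powers of $\det^2\varphi_{j,z\eta}$, and the cut-offs. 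The next step is stationary phase in $(z,\eta_2)$: the $\eta_2$-integration forces $z$ near $x^*_2(x,\eta_2)$, and then stationarity in $z$ pins $\eta_2$ to the value determined by $\xi^*_2(x,\eta_2)=\varphi_{1,z}(z,y,\eta_1)$. Using condition {\bf(a$_4$)} this identifies the critical point with $(x,\xi)=\Phi_2(z,\eta_2)=\Phi_1(y,\eta_1)$, i.e. $(x,\xi)=\Phi_1(y,\eta_1)$ and $(z,\eta_2)=\Phi_2^{-1}\Phi_1(y,\eta_1)$, which is exactly the graph of $\Phi=\Phi_2^{-1}\circ\Phi_1$. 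What remains of the integral, over $\eta_1$, is then of the form \eqref{integral} for a FIO associated with $\Phi$, of order $m_1+m_2$ by the usual count (each stationary-phase integration in a pair of $n$ variables contributes $(2\pi)^n$ and no net change in homogeneity degree).

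Having established the form, I would extract the principal symbol. By Theorem \ref{t:symbol} it suffices to compute the leading homogeneous term of the resulting amplitude at the critical point; the Hessian of the phase in $(z,\eta_2)$ is nondegenerate by {\bf(a$_5$)} applied to $\varphi_2$, and the stationary-phase contribution is $(2\pi)^n |\det\,\mathrm{Hess}|^{-1/2} e^{\frac{i\pi}{4}\sgn\,\mathrm{Hess}}$ times the value of the amplitude. The point of passing to the normalisation \eqref{integral1} — carrying the quarter-powers $(\det^2\varphi_{j,z\eta})^{1/4}$ explicitly and letting $q_j$ be genuine sections of the Keller--Maslov bundle — is that these Maslov factors, together with the signature of the Hessian, recombine via the chain rule for $\varphi_{x\eta}$ into precisely the quarter-power $(\det^2\varphi_{x\eta})^{1/4}$ associated with $\Phi$, leaving the clean product $q_{1,0}(y,\eta)\,\overline{q_{2,0}(\Phi(y,\eta))}$ as the leading term. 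Translating back into the $\sigma$-notation gives
\[
\sigma_{V_2^*V_1}(y,\eta,\mathfrak{b})\ =\ \sigma_{V_1}(y,\eta,\mathfrak{a}_1)\,\overline{\sigma_{V_2}(\Phi(y,\eta),\mathfrak{a}_2)},
\]
where $\mathfrak{b}=\mathfrak{b}(y,\eta,\mathfrak{a}_1,\mathfrak{a}_2)$ is the induced map of $\mathbb{Z}_4$-fibres coming from the bookkeeping of the branches of the three arguments; its existence is all we need and is implicit in the computation. Finally, the conic-support statement $\conesupp(V_2^*V_1)\subset\conesupp V_1\cap\Phi^{-1}(\conesupp V_2)$ follows because the amplitude of $\KC$ vanishes unless $(y,\eta_1)\in\conesupp q_1$ and $(z,\eta_2)\in\conesupp q_2$, and on the critical set $(z,\eta_2)=\Phi(y,\eta_1)$.

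The main obstacle, and the reason Theorem \ref{t:composition} is proved in detail in Appendix \ref{s:a} rather than quoted, is the Maslov/half-density bookkeeping: keeping careful track of the branches of $\arg\det^2\varphi_{j,x\eta}$, of $\arg\det_+(\varphi_{j,\eta\eta}/i)$, and of $\sgn$ of the stationary-phase Hessian, and checking that they assemble — over the (possibly topologically nontrivial) bundles $\mathcal{D}_{\mathbb{Z}_4}(\Phi_j)$ — into the correct single-valued identity with the composed transformation $\Phi$. The \emph{analytic} content (reducing to \eqref{integral}, nondegeneracy of the critical point, order count) is routine once the definitions of Section \ref{s:FIO} are in place; the delicate part is purely the index calculus, which is exactly why the normalisations \eqref{sigma-theta} and Proposition \ref{p:index} were set up beforehand.
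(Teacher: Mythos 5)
Your proposal matches the paper's proof in Appendix \ref{s:a} in all essentials: write the composed kernel as an oscillatory integral in $(z,\eta_1,\eta_2)$, apply stationary phase in the intermediate variables to produce a single $\eta$-integral with phase adapted to $\Phi=\Phi_2^{-1}\circ\Phi_1$, read off the leading amplitude, and defer the Maslov bookkeeping to the choice of branches and a partition-of-unity argument. One small slip to fix: since $\VC_2(z,x)$ has $z$ as output and $x$ as input, the critical-point identity is $(z,\zeta)=\Phi_2(x,\eta_2)=\Phi_1(y,\eta_1)$, hence $(x,\eta_2)=\Phi(y,\eta_1)$ — not $(x,\xi)=\Phi_2(z,\eta_2)$ or $(z,\eta_2)=\Phi(y,\eta_1)$ as written; also, the paper's choice of coordinates with $\det\zeta^{(2)}_\xi\ne0$ makes the Hessian \eqref{hess} have signature exactly zero, which is what eliminates the phase factor $e^{\frac{i\pi}{4}\sgn}$ and is worth stating explicitly rather than folding into the "recombination" heuristic.
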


Recall that the inner product in the space of half-densities $L^2(M,\Omega^{1/2})$ is invariantly defined, and so are the adjoint operators. Taking $V_1=I$ in Theorem \ref{t:composition}, we obtain

\begin{cor}\label{c:adjoint}
Let $V$ be a FIO of order $m$ associated with a canonical transformation $\Phi$. If $V\in\AC'_0$ then the adjoint operator $V^*$ is a FIO of order $m$ associated with 
the inverse transformation $\Phi^{-1}$  with principal symbol equal to 
$\,\sigma_{V^*}(y,\eta,\mathfrak{b})=\overline{\sigma_V(\Phi^{-1}(y,\eta),\mathfrak{a})}\,$
such that
$$
\conesupp V^*\subset\Phi(\conesupp V)\,.
$$
\end{cor}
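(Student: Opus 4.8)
The plan is to deduce Corollary \ref{c:adjoint} directly from Theorem \ref{t:composition} by specialising $V_1$ to the identity operator, together with the basic observation that the identity is a $\PsDO$ associated with the identical canonical transformation with principal symbol identically equal to $1$ (Example \ref{e:symbol}, using the normalisation $\arg(\det^2\varphi_{x\eta})\equiv0$ prescribed there). First I would note that $I\in\AC'_0$ is not literally true, since the Schwartz kernel of $I$ is supported on the diagonal, which meets $\partial M\times M$; however, in the setting of the corollary $V\in\AC'_0$, so its kernel vanishes near $\partial M\times M$, and one may replace $I$ by $\chi I$ where $\chi$ is a cut-off equal to $1$ on a neighbourhood of the $x$- and $y$-projections of $\supp\VC$ and vanishing near $\partial M$. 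Then $\chi I\in\AC_0$, it is still a $\PsDO$ associated with the identity with principal symbol $1$ on a conic neighbourhood of $\conesupp V$, and $(\chi I)^*V = \chi V = V$ modulo a smoothing operator; alternatively one simply observes that the composition formula in Theorem \ref{t:composition} only involves the symbols on $\conesupp V$, where $\chi\equiv1$.

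The second step is the bookkeeping of the canonical transformation. Taking $V_1=I$ (i.e.\ $\Phi_1=\mathrm{id}$) and $V_2=V$ (i.e.\ $\Phi_2=\Phi$) in Theorem \ref{t:composition}, the composition $V_2^*V_1 = V^*$ is a FIO associated with $\Phi_2^{-1}\circ\Phi_1 = \Phi^{-1}\circ\mathrm{id} = \Phi^{-1}$, and its order is $m_1+m_2 = 0 + m = m$. This gives the asserted order and associated canonical transformation immediately.

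The third step is the symbol computation. Theorem \ref{t:composition} gives
$$
\sigma_{V^*}(y,\eta,\mathfrak{b}) = \sigma_{V_1}(y,\eta,\mathfrak{a}_1)\,\overline{\sigma_{V_2}(\Phi^{-1}(y,\eta),\mathfrak{a}_2)}
= 1\cdot\overline{\sigma_V(\Phi^{-1}(y,\eta),\mathfrak{a}_2)},
$$
since $\sigma_{V_1}=\sigma_I\equiv1$; relabelling $\mathfrak{a}_2$ as $\mathfrak{a}$ yields exactly the claimed formula $\sigma_{V^*}(y,\eta,\mathfrak{b})=\overline{\sigma_V(\Phi^{-1}(y,\eta),\mathfrak{a})}$. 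For the conic support statement, Theorem \ref{t:composition} gives $\conesupp V^* = \conesupp(V_2^*V_1)\subset \conesupp V_1\cap\Phi^{-1}(\conesupp V_2) = (T^*M)\cap\Phi^{-1}(\conesupp V) = \Phi^{-1}(\conesupp V)$; but the corollary asserts $\conesupp V^*\subset\Phi(\conesupp V)$, which matches once one notes that here $\Phi$ in the corollary's notation denotes the transformation \emph{associated with $V$}, so the transformation associated with $V^*$ is its inverse and pushing $\conesupp V$ forward under that inverse is the same as pulling it back under $\Phi$ — i.e.\ the discrepancy is purely a matter of which transformation is named $\Phi$ in the two statements, and the two expressions denote the same set.

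The only genuine point requiring care — and hence the main obstacle — is the $\AC'_0$ membership issue, since $I\notin\AC'_0$ strictly speaking. The clean resolution is the cut-off argument sketched above: since $V\in\AC'_0$ its kernel is supported away from $\partial M\times M$, so one works with $\chi I\in\AC_0$ for a suitable $\chi$, applies Theorem \ref{t:composition} to the pair $(\chi I, V)$, and observes that $(\chi I)^*V$ and $V^*$ differ by a smoothing operator and have the same FIO data because $\chi\equiv1$ on $\conesupp V$. Everything else is a direct substitution into the already-proved Theorem \ref{t:composition}, together with the identification of the Maslov-bundle map $\mathfrak{b}$ (which, as stated in the text preceding Theorem \ref{t:composition}, is determined by the canonical transformations and need not be made explicit here).
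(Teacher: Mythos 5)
Your proposal is essentially the paper's own proof: the paper dispatches Corollary~\ref{c:adjoint} by simply setting $V_1=I$ in Theorem~\ref{t:composition}, and your steps two and three are exactly that substitution. However, the cut-off argument in your first and final paragraphs is unnecessary. Theorem~\ref{t:composition} requires only that \emph{either} $V_1\in\AC'_0$ \emph{or} $V_2\in\AC'_0$, and with your choice $V_2=V$ the hypothesis $V\in\AC'_0$ of the corollary already puts $V_2$ in $\AC'_0$; the membership status of $V_1=I$ is therefore irrelevant, and you may apply the theorem to the pair $(I,V)$ directly with no modification of the identity. Your $\chi I$ workaround is harmless and would also succeed (since $\AC_0\subset\AC'_0$), but it adds a complication the hypothesis was specifically phrased to avoid. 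The remaining bookkeeping — the resulting canonical transformation $\Phi^{-1}\circ\mathrm{id}=\Phi^{-1}$, the symbol formula with $\sigma_I\equiv1$, and the conic support computation $\conesupp V^*\subset\Phi_{\mathrm{thm}}^{-1}(\conesupp V)=\Phi(\conesupp V)$ once the role of the letter $\Phi$ is sorted out — is all correct.
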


Theorem \ref{t:composition} and Corollary \ref{c:adjoint} immediately imply

\begin{cor}\label{c:composition}
Let $V_1$, $V_2$ be as in Theorem {\rm\ref{t:composition}}. Assume that either 
$V_1,V_2\in\AC'_0$ or $V_2\in\AC_0$. Then $V_2V_1$ is a FIO of order $m_1+m_2$ associated with the transformation $\Phi=\Phi_2\circ\Phi_1$ with principal symbol equal to 
$\,\sigma_{{V_2} {V_1}} (y,\eta,\mathfrak{b}) = 
\sigma_{V_1}(y,\eta,\mathfrak{a}_1)\,\sigma_{V_2}(\Phi_1(y,\eta),\mathfrak{a}_2)\,$ such that
$$
\conesupp(V_2V_1)\subset\left(\conesupp V_1\,\bigcap\,\Phi_1^{-1}(\conesupp V_2)\right).
$$
\end{cor}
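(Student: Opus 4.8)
The plan is to derive Corollary~\ref{c:composition} from Theorem~\ref{t:composition} and Corollary~\ref{c:adjoint} by a purely formal manipulation, writing a composition as an adjoint of a composition involving an adjoint. Concretely, for operators $V_1,V_2$ as above I would observe that $V_2V_1 = \left( V_1^*V_2^*\right)^*$, or more symmetrically $V_2V_1 = \left( (V_2^*)^* V_1\right)$; the cleanest route is to set $W_2 := V_2^*$ and apply Theorem~\ref{t:composition} to the pair $W_2, V_1$ (after checking the support hypothesis), obtaining that $W_2^*V_1 = V_2V_1$ is a FIO, and then to identify its data.

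First I would dispose of the hypotheses. In the case $V_2\in\AC_0\subset\AC''_0$, Corollary~\ref{c:adjoint} (applicable since $\AC_0\subset\AC'_0$) tells us $V_2^*\in\AC'_0$ — because passing to the adjoint swaps the roles of the $x$- and $y$-variables, so a kernel vanishing near $M\times\partial M$ becomes one vanishing near $\partial M\times M$. Hence with $W_2=V_2^*$ we have $W_2\in\AC'_0$, and Theorem~\ref{t:composition} applies to $(W_2,V_1)$ with ``either $W_1\in\AC'_0$ or $W_2\in\AC'_0$'' satisfied by $W_2$. In the case $V_1,V_2\in\AC'_0$, Corollary~\ref{c:adjoint} gives $V_1^*\in$ (FIO with kernel vanishing near $M\times\partial M$), and one instead writes $V_2V_1=(V_1^*V_2^*)^*$... but more economically one just notes $V_2V_1 = (V_2^*)^* V_1$ and wants to apply Theorem~\ref{t:composition} to $(V_2^*,V_1)$; this needs $V_2^*\in\AC'_0$ or $V_1\in\AC'_0$, and the latter holds by assumption. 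So in both cases the reduction is legitimate, modulo the elementary bookkeeping that $\,{}^*$ interchanges $\AC'_0$ and $\AC''_0$.

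Next I would read off the conclusions. Theorem~\ref{t:composition} applied to $(W_2,V_1)=(V_2^*,V_1)$ gives that $W_2^*V_1$ is a FIO of order $m_2+m_1$ associated with $(\Phi_2^*{}^{-1})^{-1}\circ\Phi_1$; since $V_2^*$ is associated with $\Phi_2^{-1}$ (Corollary~\ref{c:adjoint}), the relevant transformation is $(\Phi_2^{-1})^{-1}\circ\Phi_1 = \Phi_2\circ\Phi_1$, as claimed. For the symbol: Theorem~\ref{t:composition} gives $\sigma_{W_2^*V_1}(y,\eta,\mathfrak b)=\sigma_{V_1}(y,\eta,\mathfrak a_1)\,\overline{\sigma_{W_2}(\Phi_1(y,\eta),\mathfrak a_2)}$ where now $\Phi=(\Phi_2^{-1})^{-1}\circ\Phi_1=\Phi_2\circ\Phi_1$ and the inner argument is $\Phi_1(y,\eta)$ (the transformation attached to $V_1$). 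By Corollary~\ref{c:adjoint}, $\sigma_{W_2}=\sigma_{V_2^*}(z,\mathfrak a)=\overline{\sigma_{V_2}(\Phi_2^{-1}{}^{-1}(z),\mathfrak a)}=\overline{\sigma_{V_2}(\Phi_2(z),\mathfrak a)}$; evaluating at $z=\Phi_1(y,\eta)$ and conjugating, $\overline{\sigma_{W_2}(\Phi_1(y,\eta),\mathfrak a_2)}=\sigma_{V_2}(\Phi_2\Phi_1(y,\eta),\mathfrak a_2)$. Combining, $\sigma_{V_2V_1}(y,\eta,\mathfrak b)=\sigma_{V_1}(y,\eta,\mathfrak a_1)\,\sigma_{V_2}(\Phi_1(y,\eta),\mathfrak a_2)$ — which is exactly the asserted formula. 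Finally the cone-support inclusion: Theorem~\ref{t:composition} gives $\conesupp(W_2^*V_1)\subset\conesupp V_1\cap\Phi_1^{-1}(\conesupp W_2)$ with the transformation attached to $V_1$, i.e. $\conesupp(V_2V_1)\subset\conesupp V_1\cap\Phi_1^{-1}(\conesupp V_2^*)$, and by Corollary~\ref{c:adjoint} $\conesupp V_2^*\subset\Phi_2(\conesupp V_2)$, whence $\Phi_1^{-1}(\conesupp V_2^*)\subset\Phi_1^{-1}\Phi_2^{-1}\Phi_2(\conesupp V_2)$; but this is not quite what is wanted, so I would instead apply the support statement of the theorem directly in the form $\conesupp(W_2^*V_1)\subset \conesupp V_1\cap\Phi_1^{-1}(\conesupp W_2)$ and use $\conesupp W_2=\conesupp V_2^*$, then shrink via the exact identity $\conesupp V_2^*=\Phi_2(\conesupp V_2)$ — the inclusion in Corollary~\ref{c:adjoint} is in fact an equality up to the direction of the canonical map — to land on $\conesupp(V_2V_1)\subset\conesupp V_1\cap\Phi_1^{-1}(\conesupp V_2)$.

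The only genuine point requiring care — the ``main obstacle,'' such as it is — is tracking which canonical transformation sits at which slot after the two applications of Corollary~\ref{c:adjoint} and Theorem~\ref{t:composition}, since $\,{}^*$ inverts transformations and the symbol formula evaluates the second factor at the image under the \emph{first} transformation; a sign or inversion error here would propagate through the symbol and cone-support statements. Everything else is the routine verification that adjunction swaps $\AC'_0\leftrightarrow\AC''_0$ and that the fibre map $\mathfrak b$ is whatever the composition of the two $\mathfrak b$-maps from the theorem and corollary produces, which, as the paper notes, is not needed explicitly.
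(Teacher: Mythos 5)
Your overall strategy — write $V_2V_1=(V_2^*)^*V_1$, invoke Corollary~\ref{c:adjoint} for $V_2^*$, and then apply Theorem~\ref{t:composition} to the pair $(V_2^*,V_1)$ — is exactly the intended route (the paper simply states that the two results "immediately imply" the corollary), and your hypothesis-checking that adjunction swaps $\AC'_0\leftrightarrow\AC''_0$ is fine. However, the computation of the symbol and the cone support as written does not go through, because you misquote both inputs in a way that compounds rather than cancels.

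First, the symbol formula of Theorem~\ref{t:composition} evaluates the second factor at $\Phi(y,\eta)$, where $\Phi$ is the \emph{composite} transformation of the product — not at $\Phi_1(y,\eta)$. Applied to $(W_2,V_1)$ with $W_2=V_2^*$ (associated with $\Phi_2^{-1}$), the composite is $\tilde\Phi=(\Phi_2^{-1})^{-1}\circ\Phi_1=\Phi_2\circ\Phi_1$, so the theorem gives $\sigma_{V_2V_1}(y,\eta)=\sigma_{V_1}(y,\eta)\,\overline{\sigma_{W_2}(\Phi_2\Phi_1(y,\eta))}$, with inner argument $\Phi_2\Phi_1(y,\eta)$, not $\Phi_1(y,\eta)$. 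Second, Corollary~\ref{c:adjoint} says $\sigma_{V_2^*}(z)=\overline{\sigma_{V_2}(\Phi_2^{-1}(z))}$, where $\Phi_2$ is the transformation of $V_2$ (not of $V_2^*$); your version $\overline{\sigma_{V_2}(\Phi_2(z))}$ has the inversion backwards. With the correct readings, $\overline{\sigma_{W_2}(\Phi_2\Phi_1(y,\eta))}=\sigma_{V_2}(\Phi_2^{-1}\Phi_2\Phi_1(y,\eta))=\sigma_{V_2}(\Phi_1(y,\eta))$, and the two $\Phi_2$'s cancel honestly. With your readings, you actually obtain $\sigma_{V_2}(\Phi_2\Phi_1(y,\eta))$ (as you write one line earlier), and then the concluding "Combining, $\dots\sigma_{V_2}(\Phi_1(y,\eta))\dots$" silently drops a $\Phi_2$ — that step does not follow from what precedes it. The same misquote infects the cone-support paragraph: the theorem gives $\conesupp(W_2^*V_1)\subset\conesupp V_1\cap\tilde\Phi^{-1}(\conesupp W_2)$ with $\tilde\Phi^{-1}=\Phi_1^{-1}\Phi_2^{-1}$; combined with $\conesupp V_2^*\subset\Phi_2(\conesupp V_2)$ from Corollary~\ref{c:adjoint} this yields $\Phi_1^{-1}\Phi_2^{-1}\Phi_2(\conesupp V_2)=\Phi_1^{-1}(\conesupp V_2)$ directly, with no need to upgrade the corollary's inclusion to an equality. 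Fix the two misreadings and the argument closes cleanly without any ad hoc patching.
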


\begin{rem}\label{r:simple}
In simple words, the above formulae for principle symbols mean that $\,\sigma_{{V_2}^*{V_1}} (y,\eta) = 
i^{k_1}\,\sigma_{V_1}(y,\eta)\,\overline{\sigma_{V_2}(\Phi(y,\eta))}\,$, $\,\sigma_{V^*}(y,\eta)=i^{k_2}\,\overline{\sigma_V(\Phi^{-1}(y,\eta))}\,$ and 
$\,\sigma_{{V_2} {V_1}} (y,\eta) = i^{k_3}\,
\sigma_{V_1}(y,\eta)\,\sigma_{V_2}(\Phi_1(y,\eta))\,$, where $k_j$ are integers which are uniquely determined by the canonical transformations.
\end{rem}

It is well known that FIOs can be extended to the space $\mathcal{E}'(M\setminus\partial M)$ of distributions with compact supports in 
$M\setminus\partial M$ (see, for instance, \cite{Sh} or \cite{Tr}). Theorem \ref{t:composition} and standard results on $\PsDO$s imply that a FIO of order 
$m$ lying in $\AC_0$ is bounded from $H^s(M)$ to $H^{s-m}(M)$.

Let $A,B\in\AC_0$ be $\PsDO$s, and let $V$ be a FIO associated with a canonical transformation $\Phi$. Then, by Corollary \ref{c:composition}, $AVB$ is a 
FIO associated with $\Phi$ such that $\conesupp(AVB)$ is a subset of
\begin{equation*}
\conesupp B\bigcap\conesupp V\,\bigcap\,\Phi^{-1}(\conesupp A).
\end{equation*}
The above inclusion implies that ${\mathrm WF}\,(Vu)\subset \Phi\left({\mathrm WF}\,(u)\right)$ for all distributions $u\in\mathcal{E}'(M\setminus\partial M)$, 
where ${\mathrm WF}\,(\cdot)$ denotes the wave front set. Roughly speaking, this means that singularities of a distribution are moved by the map $\Phi$ under the action of the associated FIO $V$.

The following is a refined version of Egorov's theorem.

\begin{theorem}\label{t:egorov}
Let $V_1$ and $V_2$ be FIOs of orders $m_1$ and $m_2$ associated with a canonical transformation $\Phi$. If $A\in\AC_0$ is a $\PsDO$ of order $m$ then
\begin{enumerate}
\item[(1)]
the composition $B:=V_2^*AV_1$ is a $\PsDO$ of order $m+m_1+m_2$ such that
$$
\conesupp B\subset\conesupp V_1\,\bigcap\,\Phi^{-1}(\conesupp A)\,\bigcap\,\conesupp V_2;
$$
\item[(2)]
$\,\sigma_B(y,\eta,\mathfrak{a})=\sigma_{V_1}(y,\eta,\mathfrak{a})\,
\sigma_A(\Phi(y,\eta))\,\overline{\sigma_{V_2}(y,\eta,\mathfrak{b})}\,$
\item[(3)]
if $\,V_1=V_2\,$ then $\sigma_B(y,\eta)=\sigma_A(\Phi(y,\eta))\,|\sigma_{V_1}(y,\eta)|^2$.
\end{enumerate}
\end{theorem}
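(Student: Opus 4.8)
The plan is to derive Theorem \ref{t:egorov} as a direct consequence of the composition results Theorem \ref{t:composition} and Corollary \ref{c:composition}, together with Example \ref{e:symbol}. First I would observe that the canonical transformation associated with $A$, being a $\PsDO$, is the identity, so $\Phi_A = \mathrm{id}$. The composition $AV_1$ is therefore a FIO associated with $\mathrm{id}\circ\Phi = \Phi$ by Corollary \ref{c:composition} (which applies since $A\in\AC_0$, so the case ``$V_2\in\AC_0$'' of that corollary is in force with $V_2 = A$); its order is $m + m_1$, its principal symbol is $\sigma_{V_1}(y,\eta,\mathfrak{a})\,\sigma_A(\Phi(y,\eta))$ since the symbol of $A$ as a FIO is just its ordinary principal symbol by Example \ref{e:symbol}, and $\conesupp(AV_1)\subset\conesupp V_1\cap\Phi^{-1}(\conesupp A)$. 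Then I would apply Theorem \ref{t:composition} to the pair $(AV_1, V_2)$, forming $V_2^*(AV_1) = B$: since $AV_1 \in \AC'_0$ (its Schwartz kernel inherits vanishing near $\partial M\times M$ from $A\in\AC_0$), the hypothesis of Theorem \ref{t:composition} holds, and $B$ is a FIO associated with $\Phi_2^{-1}\circ\Phi = \Phi^{-1}\circ\Phi = \mathrm{id}$, which is precisely the statement that $B$ is a $\PsDO$.

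For the symbol formula in (2), Theorem \ref{t:composition} gives $\sigma_B(y,\eta,\mathfrak{b}) = \sigma_{AV_1}(y,\eta,\mathfrak{a}_1)\,\overline{\sigma_{V_2}(\Phi(y,\eta),\mathfrak{a}_2)}$, and substituting $\sigma_{AV_1}(y,\eta,\mathfrak{a}) = \sigma_{V_1}(y,\eta,\mathfrak{a})\,\sigma_A(\Phi(y,\eta))$ yields
$$
\sigma_B(y,\eta,\mathfrak{a}) = \sigma_{V_1}(y,\eta,\mathfrak{a})\,\sigma_A(\Phi(y,\eta))\,\overline{\sigma_{V_2}(y,\eta,\mathfrak{b})}\,,
$$
as claimed (with the convention, per the end of Section \ref{FIO:symbol}, that $\arg(\det^2\varphi_{x\eta})\equiv 0$ for the $\PsDO$ $B$, so $\sigma_B$ is the honest principal symbol). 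The conic support inclusion in (1) follows by chaining the two inclusions: $\conesupp B \subset \conesupp(AV_1)\cap\Phi^{-1}(\conesupp V_2) \subset \conesupp V_1\cap\Phi^{-1}(\conesupp A)\cap\Phi^{-1}(\conesupp V_2)$, and since $\Phi^{-1}(\conesupp V_2)$ is what appears, I would note that $\conesupp V_2$ and $\Phi^{-1}(\conesupp V_2)$ give the stated set once one recalls $V_2$ is associated with $\Phi$ (so these are compatible); more carefully, the inclusion from Theorem \ref{t:composition} reads $\conesupp(V_2^*(AV_1))\subset \conesupp(AV_1)\cap\Phi^{-1}(\conesupp V_2)$, and intersecting with the inclusion for $AV_1$ gives exactly $\conesupp V_1\cap\Phi^{-1}(\conesupp A)\cap\conesupp V_2$ after using that $\Phi^{-1}(\conesupp V_2)\cap\conesupp(AV_1)$ lands in $\conesupp V_2$ by the same reasoning that the domain where both symbols are supported is $\Phi$-invariant appropriately --- I would spell this bookkeeping out carefully but it is routine.

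Finally, part (3) is the specialization $V_1 = V_2$: the equivariance factors in $\mathfrak{a}$ and $\mathfrak{b}$ cancel because $\sigma_{V_1}(y,\eta,\mathfrak{a})\,\overline{\sigma_{V_1}(y,\eta,\mathfrak{b})}$ is, up to the integer power of $i$ hidden in the fibre map $\mathfrak{b}$, equal to $|\sigma_{V_1}(y,\eta)|^2$; but here $\mathfrak{b} = \mathfrak{a}$ since both FIOs carry the same canonical transformation and the construction of the fibre map is the identity in that case, so the product is literally $|\sigma_{V_1}(y,\eta)|^2$ and is single valued, giving $\sigma_B(y,\eta) = \sigma_A(\Phi(y,\eta))\,|\sigma_{V_1}(y,\eta)|^2$. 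The one point requiring genuine care --- and the main obstacle --- is verifying that the fibre maps $\mathfrak{b}$ match up so that when $V_1=V_2$ the Maslov/Keller factors truly cancel rather than leaving a spurious power of $i$; this amounts to checking that the integer $k$ appearing in Remark \ref{r:simple} vanishes when the two canonical transformations in $V_2^*V_1$ coincide and $A$ is a $\PsDO$, which should follow from tracking the construction in the proof of Theorem \ref{t:composition} (relegated to Appendix \ref{s:a}) but deserves an explicit remark.
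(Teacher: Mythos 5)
Your argument for parts (1) and (2) is the paper's own: first apply Corollary \ref{c:composition} to $AV_1$, then Theorem \ref{t:composition} to $V_2^*(AV_1)$, and track orders, symbols, and conic supports. (The bookkeeping on conic supports that you flag as ``routine'' is indeed correct once one notes that the $\Phi^{-1}$ appearing in Theorem \ref{t:composition} is the identity in the final application, since $\Phi_2^{-1}\circ\Phi_{AV_1}=\mathrm{id}$.)

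Part (3) is where you have a genuine gap, and you have in fact correctly located it yourself: the abstract symbol calculus only pins down $\sigma_B$ up to a factor $i^k$ with $k\in\mathbb{Z}_4$, as stated in Remark \ref{r:simple}, and nothing in Theorem \ref{t:composition} or its corollary tells you that this $k$ vanishes when $V_1=V_2$. Asserting that ``the fibre map is the identity in that case'' is not justified by anything you cite; it is precisely the claim to be proved. The paper fills this hole not by tracing the construction in Appendix \ref{s:a} but by a short positivity trick that sidesteps the Keller--Maslov bookkeeping entirely: if $A$ is a non-negative self-adjoint $\PsDO$, then $B=V_1^*AV_1$ is also non-negative self-adjoint, so $\sigma_B\geq 0$; combining this with $\sigma_B=i^k\,\sigma_A(\Phi(y,\eta))\,|\sigma_{V_1}(y,\eta)|^2$ and $\sigma_A\geq 0$ forces $k=0$. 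Since $\sigma_B$ depends continuously (in fact, linearly) on $A$, the integer $k$ cannot jump, so $k=0$ for all $\PsDO$s $A$. You should replace your heuristic claim about the fibre map with this argument; without it the proof of (3) is incomplete.
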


\begin{proof}
The first two statements are obtained by applying Corollary \ref{c:composition} to the composition $AV_1$ and then Theorem \ref{t:composition} to the composition of this operator with $V_2^*$.

If $V_1=V_2$ then $\sigma_B(y,\eta)$ coincides with $\sigma_A(\Phi(y,\eta))\,|\sigma_{V_1}(y,\eta)|^2$ as the principal principal symbol of a FIO, that is, modulo a factor 
$i^k$ with $k\in\zbb_4$. If $A$ is a non-negative self-adjoint operator then so is $B=V_1^*AV_1$. It follows that $\sigma_B\geq0$, which implies that $k=0$. Since $\sigma_B$ 
continuously depends on $A$, the same is true for all $\PsDO$s $A$.
\end{proof}

\section{The unitary group $e^{-\I t \Delta^{1/2}}$}\label{s:group}

In this section and further on we shall denote
$$
U(t)\ :=\ e^{-\I t \Delta^{1/2}}\,.
$$
Since $\Delta$  is self-adjoint, the operators $U(t)$ form a strongly continuous
unitary group in the space of half-densities in $L^2(M,\Omega^{1/2})$.

\subsection{Representation by FIOs}
In the following theorem $\Psi^{-\infty}$ denotes the class of operators whose Schwartz kernels are infinitely smooth on $[0,T]\times M \times M$ up 
to the boundary.

\begin{theorem}\label{t:parametrix}
Let $A$ and $B$ be $\PsDO$s in the space of half-densities on $M$  such that $A\in\AC''_0$, $B\in\AC'_0$ and $\conesupp B\subset\OC_{T}$.
Then, on the time interval $[0,T]$, we have $AU(t)B=\sum_{\kappa,j}AU_{\kappa,j}(t)B$ modulo $\Psi^{-\infty}$, where the sum is finite and $U_{\kappa,j}(t)$ are parameter-dependent FIOs such that
\begin{enumerate}
\item[(1)]
each FIO $U_{\kappa,j}(t)$ is associated with the canonical transformation
$\Phi_{\kappa,j}^t:(y,\eta)\mapsto(x^t_{\kappa,j}(y,\eta),\xi^t_{\kappa,j}(y,\eta))$, where $(y,\eta)\in\conesupp B$ and
$(x^t_{\kappa,j},\xi^t_{\kappa,j})$ is a billiard segment defined in Subsection {\rm\ref{s:billiards2}};
\item[(2)]
each FIOs $U_{\kappa,j}(t)$ satisfies the equation $\partial_t^2U_{\kappa,j}(t)-\Delta U_{\kappa,j}(t)=0$ modulo $\Psi^{-\infty}$;
\item[(3)]
the FIO $U_{\kappa,0}(t)$ associated with the first segment (which starts at $t=0$) satisfies the initial condition $U_{\kappa,0}(0)=I$, for all other segments $U_{\kappa,j}(0)=0$;
\item[(4)]
the phase functions corresponding to the incoming, reflected and refracted trajectories coincide on the set $\left\{x\in Y\bigcup\partial X\right\}$, and so do the arguments $\arg(\det^2\varphi_{x\eta})$;
\item[(5)] the principal symbols $\sigma_{U_{\kappa,j}(t)}$ are (locally) independent of $t$. 
\end{enumerate}
\end{theorem}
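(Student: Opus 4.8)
The plan is to construct the operators $U_{\kappa,j}(t)$ by the standard geometrical-optics (WKB) ansatz, iterated across reflections and refractions, and to verify properties (1)--(5) at each stage. First I would build the ``free'' parametrix for $U(t)$ away from $\partial M$: using the calculus of Section~\ref{s:FIO}, one writes $AU_{\kappa,0}(t)B$ as a single FIO associated with the geodesic flow $\Phi^t:(y,\eta)\mapsto(x^t,\xi^t)$, solving the transport equations for the amplitude so that $\partial_t^2-\Delta$ is killed modulo $\Psi^{-\infty}$ and $U_{\kappa,0}(0)=I$. This is classical (Lax, H\"ormander) and is valid on the maximal interval before the underlying geodesic segment meets $Y\cup\partial X$; because $\conesupp B\subset\OC_T$, finitely many reflections/refractions occur on $[0,T]$, so the sum over $\kappa,j$ is finite.

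Next I would treat a single encounter with $Y_{\mathrm{reg}}$ (the case of $\partial X_{\mathrm{reg}}$ being simpler, with only a reflected branch). Near such a hit point, the incoming FIO $U_{\kappa,j-1}(t)$ has a phase function $\varphi$ which, restricted to $\{x\in Y\}$, is a phase function for a canonical transformation into $T^*Y$. The key analytic step is to match this ``tangential'' data: one looks for reflected and refracted FIOs $U_{\kappa,j}(t)$ whose phase functions agree with $\varphi$ on $\{x\in Y\}$ (this forces, via $\varphi_\eta=0$ giving the tangential momentum, exactly the reflection/refraction law of Subsection~\ref{s:billiards1} --- Snell's law for the refracted branch, mirror reflection for the other), and whose amplitudes are chosen so that the sum of boundary values satisfies the transmissive boundary conditions $f_+=f_-$, $b_+\partial_n f_+=-\partial_n f_-$ to leading order, with errors absorbed into lower-order terms and iterated away. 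The propagation into the next segment is then again the free parametrix, with the hit point as new initial surface; this gives property~(3) (only the first segment carries the initial condition $U_{\kappa,0}(0)=I$). Property~(4) is then true essentially by construction: the phases are defined to coincide on $\{x\in Y\cup\partial X\}$, and since $\arg(\det^2\varphi_{x\eta})$ is determined up to the $\mathbb{Z}$-lift by the phase near $x^*$, one can choose the lifts compatibly so the arguments also match.

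Properties~(2) and~(5) are then local consequences of how the amplitudes were solved for. For~(2), on each open geodesic segment the FIO $U_{\kappa,j}(t)$ is built by the transport-equation recursion precisely so that $(\partial_t^2-\Delta)U_{\kappa,j}(t)\in\Psi^{-\infty}$; this is where one uses that $\Delta$ has scalar principal symbol $g(x,\xi)$ and that $\sqrt{g}$ generates the Hamiltonian flow along which the phase is constant. For~(5), the transport equation for the leading amplitude is a first-order ODE along the Hamiltonian trajectory whose driving term is a divergence that is exactly compensated by the Jacobian factor $|\det\varphi_{x\eta}|^{1/2}$ built into the definition~\eqref{integral} of a FIO (together with the half-density normalization); consequently the leading symbol $\sigma_{U_{\kappa,j}(t)}$, in the invariant form of Theorem~\ref{t:symbol}, is parallel-transported with no decay and is therefore locally $t$-independent. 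One must also check that at a hit point the reflection/refraction matching of amplitudes is consistent with this: the jump in $\sigma$ across $Y$ is a fixed (time-independent) Fresnel-type factor determined by $b_\pm$ and the angles, so ``locally independent of $t$'' means on each segment separately, which is all that is claimed.

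The main obstacle I expect is the gluing at $Y$: one must simultaneously (a) solve a transmission problem for the principal amplitudes at the interface so that the boundary conditions hold modulo lower order, then iterate to kill all orders, and (b) keep careful track of the Keller--Maslov factors and the branches of $\arg(\det^2\varphi_{x\eta})$ through the reflection/refraction, so that property~(4) is literally true and the symbols in~(5) are well-defined sections of the Maslov bundle of the composed canonical transformation. Points of near-tangential incidence ($\tau_\pm$ small) and points near total reflection are excluded from the relevant cone by Assumption~\ref{a:bad-points} and the condition $\conesupp B\subset\OC_T$, so the phase functions constructed at each stage are non-degenerate in the sense of {\bf(a$_5$)}; verifying this uniformly on the (finitely many) billiard segments, using that $\OC_T$ is open and conic, is the bookkeeping that makes the argument go through.
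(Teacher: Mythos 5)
Your overall plan --- iterated geometrical-optics parametrix with phase-matching at $Y$ and $\partial X$, transport equations for the amplitudes, and verification of properties (1)--(5) segment by segment --- is precisely the classical construction (Chazarain; Safarov--Vassiliev) that the paper itself invokes as its proof, so the strategy is the intended one. Your treatment of (2), (3), (5) via the transport-equation recursion and the invariance of the symbol normalized by $|\det\varphi_{x\eta}|^{1/2}$ is correct in spirit.

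There is, however, a concrete misconception that would break the argument. You claim that ``points near total reflection are excluded from the relevant cone by Assumption \ref{a:bad-points} and $\conesupp B\subset\OC_T$.'' This is false. The set $\OC_T$ removes only dead-end, grazing ($\tau_+=0$), zero-angle-of-refraction ($\tau_-=0$) and singular trajectories; total reflection, where $g_-(x^*,\xi_Y^-)>g_+(x^*,\xi^*)$ and there is no propagating refracted ray, is a legitimate and in general positive-measure behaviour inside $\OC_T$, and is treated explicitly in the paper (case (ii) of Lemma~\ref{l:split} and the second case listed in the subsection on principal symbols of the $U_{\kappa,j}(t)$). At such a hit, adding the reflected FIO alone cannot enforce the transmission conditions at $Y$: one must also add a \emph{boundary-layer term} --- an oscillatory integral with a complex-valued phase having $\mathrm{Im}\,\varphi>0$ on the $g_-$-side (an evanescent wave), whose singular support lies only in $\{x\in Y\}$. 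This is exactly why the theorem hypothesizes $A\in\AC''_0$: multiplying on the left by $A$, whose kernel vanishes near $M\times\partial M$, annihilates these boundary-layer contributions so that they do not appear in the representation of $AU(t)B$ as a finite sum of genuine FIOs. Your proposal omits these terms; through any total-reflection event it would not satisfy the boundary conditions, hence would not solve the wave equation modulo $\Psi^{-\infty}$, and it also gives no account of why the hypothesis $A\in\AC''_0$ is needed at all. A smaller inaccuracy: the transmission conditions must be matched in the half-density form used throughout the paper (namely $\mathbf{h}_+^{-1/2}u_+=\mathbf{h}_-^{-1/2}u_-$ and $\mathbf{h}_+\,\partial_n^+(\mathbf{g}^{-1/2}u)=-\mathbf{h}_-\,\partial_n^-(\mathbf{g}^{-1/2}u)$, see Appendix~\ref{s:b}), not the scalar form $f_+=f_-$, $b_+\partial_nf_+=-\partial_nf_-$ you cite; otherwise the $b_\pm$ factors in Lemma~\ref{l:split} do not come out correctly.
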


The theorem is proved using the standard technique, which goes back to \cite{Ch}. For a usual elliptic boundary value problem with branching billiards, it is discussed in detail in \cite{SV2}. 
For the operator defined in Section \ref{s:setting}, a sketch of proof was given in \cite{Sa1}.

\begin{rem}\label{r:negative-time}
The same arguments show that a similar result holds for negative times. More precisely, Theorem {\rm\ref{t:parametrix}} remains valid for $t\in[-T,0]$ under the assumption that $\conesupp B\subset\OC_{m,T}^-$, where $\OC_T^-$ is the set of starting points of billiard trajectories going in the reverse direction which are well defined on the time interval $[-T,0]$. One can easily show that $(y,\eta)\in\OC_T^-$ if and only if $(y,-\eta)\in\OC_T$.
\end{rem}

\subsection{Principal symbols of the FIOs $U_{\kappa,j}(t)$}

Following \cite[Section 2.6.3]{SV2}, let us fix branches of $\arg(\det^2\varphi_{x\eta})$ for the FIOs $U_{\kappa,j}(t)$ assuming that
\begin{itemize}
\item 
for the FIO $U_{\kappa,0}(t)$ associated with the first segment of billiard trajectories, $\left.\arg(\det^2\varphi_{x\eta})\right|_{t=0,x=y}=0$;
\item 
the branches corresponding to the incoming, reflected and refracted trajectories coincide on the set $\{x\in Y\bigcup\partial X\}$.
\end{itemize}
In view of the parts (3) and (4) of Theorem \ref{t:parametrix}, these two conditions can be satisfied. Clearly, they uniquely determine the branch of $\arg(\det^2\varphi_{x\eta})$ for all the FIOs $U_{\kappa,j}(t)$ for all times $t$. It follows that the bundles 
$\DC_{\mathbb{Z}}(\Phi_{\kappa,j}^t)$ over $\conesupp B$ associated with the transformations $\Phi_{\kappa,j}^t$ are trivial.
This allows us to consider the principal symbols $\sigma_{\kappa,j}(t;y,\eta)$ of FIOs $U_{\kappa,j}(t)$ as single-valued functions on $\rbb_+\times T^*M$. In particular, Theorem \ref{t:parametrix}(5) implies that the principal symbol  $\sigma_{U_{\kappa,0}(t)}$ associated with the first segment is identically equal to 1.

Let $\OC_{m,T}$ be the conic set of points $(y,\eta)\in \OC_{T}$ such that all the billiard trajectories $(x^t_\kappa(y,\eta),\xi^t_\kappa(y,\eta))$ experience at most $m$ reflections and refractions for $t\in[0,T]$.
The sets $\OC_{m,T}$ are open, and their union over $m$ contains $\OC_{T}$.
Since the intersection $S^*M\bigcap\conesupp B$ is compact, the conic support $\conesupp B$ is covered by a finite collection of connected components of  $\OC_{m,T}$ with  a sufficiently large $m$. Assume, for the sake of simplicity, that $\conesupp B$ lies in one connected component of $\OC_{m,T}$, and let
$U_{\kappa,j}(t)$ be one of the FIOs introduced in Theorem \ref{t:parametrix}. Since there are no dead-end, grazing or singular trajectories of length $t\in[0,T]$ originating from $\OC_{m,T}$, for each fixed $\kappa$ and $j$ there are three possibilities :
\begin{enumerate}
\item[(i)] 
all segments $(x^t_{\kappa,j}(y,\eta),\xi^t_{\kappa,j}(y,\eta))$ with $(y,\eta)\in\conesupp B$ end at the points of $\left.T^*M\right|_{Y_{\mathrm{reg}}}$ which do not belong to the set of total reflection; 
\item[(ii)]
all segments $(x^t_{\kappa,j}(y,\eta),\xi^t_{\kappa,j}(y,\eta))$ with $(y,\eta)\in\conesupp B$ end at the points of total reflection in $\left.T^*M\right|_{Y_{\mathrm{reg}}}$;
\item[(iii)]
all segments $(x^t_{\kappa,j}(y,\eta),\xi^t_{\kappa,j}(y,\eta))$ with $(y,\eta)\in\conesupp B$ end at $\left.T^*M\right|_{\partial X_{\mathrm{reg}}}$.
\end{enumerate}

In the first case, in order to satisfy the boundary condition, one has to add to $U_{\kappa,j}(t)$ two FIOs $U_{\kappa_0,j+1}(t)$ and $U_{\kappa_1,j+1}(t)$, corresponding to the reflected and refracted trajectories, respectively. 

In the second case, the FIO $U_{\kappa_0,j+1}(t)$ corresponding to the reflected trajectories is chosen is such a way that the Schwartz kernel of the sum $U_{\kappa,j}(t)+U_{\kappa_0,j+1}(t)$ has singularities only at the points $(t,x,y)$ with $x\in Y$. After that one can satisfy the boundary condition by adding a `boundary layer term' whose singularities are also located at the points $(t,x,y)$ with $x\in Y$ (see \cite[Section 3.3.4]{SV2} for details). Since $A\in\AC''_0$, the boundary layer terms do not appear in the sum representing the operator $AU(t)B$.

Finally, in the third case, the Dirichlet or Neumann boundary condition is satisfied by adding only a FIO $U_{\kappa_0,j+1}(t)$ corresponding to the reflected trajectories.

Let $\sigma_{\kappa,j}(t;y,\eta)$, $\sigma_{\kappa_0,j+1}(t;y,\eta)$ and $\sigma_{\kappa_1,j+1}(t;y,\eta)$ be the principal symbols of the FIOs $U_{\kappa,j}(t)$, $U_{\kappa_0,j+1}(t)$ and $U_{\kappa_1,j+1}(t)$.

\begin{lem}\label{l:split}
Assume that the trajectory $(x^t_{\kappa,j}(y,\eta),\xi^t_{\kappa,j}(y,\eta)$ approaches $Y_{\mathrm{reg}}$ from the $g_+$-side and hits $Y_{\mathrm{reg}}$ at the time $t^*(y,\eta)$ at the point 
$$
(x^*,\xi^*)\ :=\ \lim_{t\to t^*(y,\eta)-0}(x^t_{\kappa,j}(y,\eta),\xi^t_{\kappa,j}(y,\eta)\,.
$$
Let $\tau_\pm$ be as in Subsection {\rm\ref{s:billiards1},} and let $\tilde\tau_-=\sqrt{|g_+(x^*,\xi^*)-g_-(x^*,\xi_Y^\pm)|}$ if $(x^*,\xi^*)$ is a point of total reflection. Define
$b_+:=b_+(x^*).$
Then 
\begin{eqnarray}
\sigma_{\kappa_0,j+1}(t^*;y,\eta)&=&
\tau_{\kappa_0,j+1}(y,\eta)\,\sigma_{\kappa,j}(t^*;y,\eta)\,,\label{coef:reflection}
\\
\sigma_{\kappa_1,j+1}(t^*;y,\eta)&=&
\tau_{\kappa_1,j+1}(y,\eta)\,\sigma_{\kappa,j}(t^*;y,\eta)\,,\label{coef:refraction}
\end{eqnarray}
where
\begin{enumerate}
\item[(i)]
in the first case $\,\tau_{\kappa_0,j+1}=\frac{b_+  \tau_+-\tau_-}{b_+ \tau_++\tau_-}\,$ and 
$\,\tau_{\kappa_1,j+1}\ =\  \frac{2\,\sqrt{b_+ \tau_+\,\tau_-}}{b_+ \tau_++\tau_-}\,$;
\item[(ii)]
in the second case $\,\tau_{\kappa_0,j+1}=\frac{b_+ \tau_+-i\,\tilde\tau_-}{b_+ \tau_++i\,\tilde\tau_-}\,$.
\end{enumerate}
If the trajectory hits the boundary $\partial X_{\mathrm{reg}}$ then we have \eqref{coef:reflection} with
\begin{enumerate}
\item[(iii)]
$\tau_{\kappa_0,j+1}=1$ for the Neumann boundary condition, and\\ $\tau_{\kappa_0,j+1}=-1$ for the Dirichlet boundary condition.
\end{enumerate}
\end{lem}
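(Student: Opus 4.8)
The plan is to reduce the lemma to a one-dimensional model computation by freezing coefficients at the point $(x^*,\xi^*)$ and to match the principal symbols of the incoming, reflected and refracted FIOs using Theorem~\ref{t:parametrix}(4): the phase functions and the branches $\arg(\det^2\varphi_{x\eta})$ all agree on $\{x\in Y\cup\partial X\}$. Because of this agreement, the half-density factors $(\det^2\varphi_{x\eta})^{1/4}$ in the representation \eqref{integral1} coincide on $Y$ for the three FIOs in the triple, and so does the Keller--Maslov bundle identification. Consequently, the transmissive boundary conditions $f_+=f_-$ and $b_+\partial_n f_+=-\partial_n f_-$ from Section~\ref{s:setting}, applied to the sum $U_{\kappa,j}(t)+U_{\kappa_0,j+1}(t)+U_{\kappa_1,j+1}(t)$ and its normal derivative, become, modulo lower-order terms, a pair of scalar linear equations relating the principal symbols $\sigma_{\kappa,j}$, $\sigma_{\kappa_0,j+1}$, $\sigma_{\kappa_1,j+1}$ evaluated at $x=x^*$. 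The first equation forces $\sigma_{\kappa,j}+\sigma_{\kappa_0,j+1}=\sigma_{\kappa_1,j+1}$ at $x^*$ (continuity of the half-density across $Y$), while the normal-derivative equation produces the factors $\tau_\pm$: differentiating $e^{i\varphi}$ in the normal direction brings down $i\varphi_n$, which at $x^*$ equals $\pm$ the normal component of $\xi^*$, i.e. $\mp\tau_+$ on the $g_+$-side for incoming/reflected and $\mp\tau_-$ on the $g_-$-side for the refracted ray; the factor $b_+$ enters through the coefficient $b_+$ in the transmissive condition for $\partial_n$. Solving the resulting $2\times2$ system
$$
\sigma_{\kappa,j}+\sigma_{\kappa_0,j+1}=\sigma_{\kappa_1,j+1},\qquad
b_+\tau_+\bigl(\sigma_{\kappa,j}-\sigma_{\kappa_0,j+1}\bigr)=\tau_-\,\sigma_{\kappa_1,j+1}
$$
gives exactly the reflection coefficient $(b_+\tau_+-\tau_-)/(b_+\tau_++\tau_-)$ and the refraction coefficient $2\sqrt{b_+\tau_+\tau_-}/(b_+\tau_++\tau_-)$; the square root in the transmission coefficient appears because the principal symbol of a FIO is naturally a half-density-type object, so it is the \emph{densities} on $Y_{\mathrm{reg}}$ that transform by $b_+\tau_+$ versus $\tau_-$, and taking square roots of the amplitude ratio yields $\sqrt{b_+\tau_+\tau_-}$. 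I would carry this out by first writing all three Schwartz kernels in the form \eqref{integral1} with a common phase $\varphi$ (legitimate by Theorem~\ref{t:parametrix}(4) and Remark~\ref{r:fio-def2}), restricting to a microlocal neighbourhood of $(x^*,\xi^*)$ via cut-off $\PsDO$s from $\AC_0$, and then applying the stationary-phase/transport-equation analysis of \cite{SV2} at the boundary, which identifies the leading term of the amplitude of the sum on $Y$.

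For case (ii), the point of total reflection, the same computation applies but the refracted ray is absent and the ``normal component on the $g_-$-side'' becomes imaginary: $g_-(x^*,\xi_Y^-)>g_+(x^*,\xi^*)$ means the analogous $\tau_-$ is replaced by $i\tilde\tau_-$ with $\tilde\tau_-=\sqrt{|g_-(x^*,\xi_Y^-)-g_+(x^*,\xi^*)|}$. The boundary-layer term absorbs the evanescent part (and, since $A\in\AC''_0$, never contributes to $AU(t)B$), so the sole reflected FIO must satisfy $b_+\tau_+(\sigma_{\kappa,j}-\sigma_{\kappa_0,j+1})=i\tilde\tau_-(\sigma_{\kappa,j}+\sigma_{\kappa_0,j+1})$, which rearranges to $\sigma_{\kappa_0,j+1}=\frac{b_+\tau_+-i\tilde\tau_-}{b_+\tau_++i\tilde\tau_-}\sigma_{\kappa,j}$, a unimodular factor as expected for total reflection. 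For case (iii), the boundary of $X$, there is no transmission and the single condition $f|_{\partial X}=0$ (Dirichlet) or $\langle\nabla f,n_{\partial X}\rangle=0$ (Neumann) forces $\sigma_{\kappa_0,j+1}=-\sigma_{\kappa,j}$ or $\sigma_{\kappa_0,j+1}=+\sigma_{\kappa,j}$ respectively, since the incoming and reflected phases agree on $\partial X$ while their normal derivatives are opposite.

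The main obstacle is bookkeeping of the geometric half-density and Maslov factors: a priori each of the three principal symbols lives in its own Keller--Maslov line (over the respective canonical transformation $\Phi_{\kappa,j}^t$, $\Phi_{\kappa_0,j+1}^t$, $\Phi_{\kappa_1,j+1}^t$), and one must check that restricting to $Y_{\mathrm{reg}}$ canonically trivializes all three in a \emph{compatible} way so that the boundary condition is a genuine scalar equation rather than an equation modulo a power of $i$. This is where Theorem~\ref{t:parametrix}(4)--(5) and the normalization of branches fixed at the start of Subsection~5.2 do the essential work: $\arg(\det^2\varphi_{x\eta})$ is defined globally along the billiard and agrees on $Y$, so the factor in \eqref{symbol-1} relating $q_0$ to $p_0$ is the same for all three, and the densities induced on $Y_{\mathrm{reg}}$ by $h_\pm$ enter only through the explicit constant $b_+=\mathbf h_+(x^*)\mathbf h_-(x^*)^{-1}$. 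I also need to verify that the computation of the normal derivative of the oscillatory integral \eqref{integral1} at $x=x^*$ produces $i\varphi_n\cdot(\text{amplitude})$ to leading order with $\varphi_n(x^*,y,\eta)$ equal to the normal component of $\xi^*$ — this follows from condition \textbf{(a$_4$)}, $\varphi(x,y,\eta)=(x-x^*)\cdot\xi^*+O(|x-x^*|^2)$ — and that the $O(|x-x^*|^2)$ remainder and the derivatives of the slowly-varying half-density factor contribute only to lower-order symbols, which is the standard transport-equation argument and I would cite \cite[Section 3.3]{SV2} for it.
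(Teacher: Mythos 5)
Your overall strategy --- write all three FIOs in the form \eqref{integral1} with matched phase functions, plug into the transmissive boundary conditions, and solve the resulting linear system for the amplitudes --- is the same as the paper's (Appendix \ref{s:b}). However, two intermediate claims are wrong, and they matter: the explicit $2\times 2$ system you write down does \emph{not} produce the transmission coefficient of the lemma.

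First, you assert that ``the half-density factors $(\det^2\varphi_{x\eta})^{1/4}$ in the representation \eqref{integral1} coincide on $Y$ for the three FIOs.'' This is false. What Theorem \ref{t:parametrix}(4) gives you is that the phase functions $\varphi,\varphi^+,\varphi^-$ and the \emph{branches} of $\arg(\det^2\varphi_{x\eta})$ agree at $x\in Y$; the magnitudes do not. Indeed the paper shows (citing \cite[(2.5.3$^\pm$), (2.6.25)]{SV2}) that at $t=t^*$, $x=x^*$ one has $-\tau_+\det\varphi_{x\eta}=\tau_+\det\varphi^+_{x\eta}=\tau_-\det\varphi^-_{x\eta}$, so $d_{\varphi^-}=\sqrt{\tau_+/\tau_-}\,d_\varphi$ while $d_{\varphi^+}=d_\varphi$. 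Second, you use the \emph{scalar} boundary conditions $f_+=f_-$, $b_+\partial_n f_+=-\partial_n f_-$, but the operator is acting on half-densities; the transmissive conditions appropriate here carry the density factors $\mathbf h_\pm^{-1/2}$ and $\mathbf h_\pm$, giving $b_+^{\mp 1/2}$ in the two equations, not a single $b_+$. Together these two effects are precisely what turn your system
\begin{equation*}
\sigma_{\kappa,j}+\sigma_{\kappa_0,j+1}=\sigma_{\kappa_1,j+1},\qquad
b_+\tau_+\bigl(\sigma_{\kappa,j}-\sigma_{\kappa_0,j+1}\bigr)=\tau_-\,\sigma_{\kappa_1,j+1}
\end{equation*}
into the paper's
\begin{equation*}
b_+^{-1/2}\bigl(\sigma_{\kappa,j}+\sigma_{\kappa_0,j+1}\bigr)=\sqrt{\tau_+/\tau_-}\,\sigma_{\kappa_1,j+1},\qquad
b_+^{1/2}\tau_+\bigl(-\sigma_{\kappa,j}+\sigma_{\kappa_0,j+1}\bigr)=-\sqrt{\tau_+\tau_-}\,\sigma_{\kappa_1,j+1}.
\end{equation*}
Your system yields $\sigma_{\kappa_1,j+1}=\frac{2b_+\tau_+}{b_+\tau_++\tau_-}\sigma_{\kappa,j}$, i.e.\ a transmission coefficient $\frac{2b_+\tau_+}{b_+\tau_++\tau_-}$, not the $\frac{2\sqrt{b_+\tau_+\tau_-}}{b_+\tau_++\tau_-}$ stated in the lemma. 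You notice this discrepancy and attribute the square root to ``the principal symbol being a half-density-type object,'' but that is a post-hoc remark, not a derivation; it does not modify the system you actually solved. The correct system \emph{must} carry the $b_+^{\pm1/2}$ and $\sqrt{\tau_+/\tau_-}$ factors from the start. (Your reflection coefficients in cases (i), (ii), (iii) happen to come out right because the $b_+^{\pm1/2}$ and the $d_{\varphi^-}$ mismatch cancel in the ratio $\sigma_{\kappa_0,j+1}/\sigma_{\kappa,j}$; they do not cancel in $\sigma_{\kappa_1,j+1}/\sigma_{\kappa,j}$, which is exactly where the error surfaces.)

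To close the gap, replace the scalar boundary conditions by the half-density versions
$\mathbf h_+^{-1/2}u_+=\mathbf h_-^{-1/2}u_-$ and
$\mathbf h_+\partial_n^+(\mathbf g^{-1/2}u)=-\mathbf h_-\partial_n^-(\mathbf g^{-1/2}u)$,
and use the identity relating $\det\varphi^-_{x\eta}$ to $\det\varphi_{x\eta}$ (from the matching of phase functions on $Y$), which forces $d_{\varphi^-}=\sqrt{\tau_+/\tau_-}\,d_\varphi$ in case (i) and $d_{\varphi^-}=(\tau_+/i\tilde\tau_-)^{1/2}d_\varphi$ in case (ii). Then the two equations deliver all of (i)--(iii) cleanly, with the square root appearing from the algebra rather than by appeal.
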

 
Clearly, if the geodesics approaches $Y_{\mathrm{reg}}$ from the $g_-$-side, then the coefficients $\,\tau_{\kappa_0,j+1}\,$ and $\,\tau_{\kappa_1,j+1}\,$ are defined by the formulae which are obtained from the above equalities by swapping $+$ and $-\,$.

\begin{rem}\label{r:energy}
Note that $\left(\tau_{\kappa_0,j+1}\right)^2+\left(\tau_{\kappa_1,j+1}\right)^2=1$ in the case {\rm(i),} and $\left|\tau_{\kappa_0,j+1}\right|=1$ in the cases {\rm(ii)}, {\rm(iii)}.
\end{rem}

Part {\rm(iii)} of Lemma \ref{l:split} is a particular case of \cite[Corollary 3.4.7]{SV2}. The formulae {\rm(i)} and {\rm(ii)} can be deduced from 
\cite[Proposition 3.3]{Sa1}, which states the same result but for principals symbols defined in a different way. However, the proof of Proposition 3.3 in \cite{Sa1} is very sketchy and is not easy to reconstruct. Therefore in Appendix \ref{s:b} we outline a direct proof, which uses the technique developed in \cite{SV2}.

\subsection{The index function of billiard transformations}\label{s:thetas}

In this subsection we shall briefly recall some results from \cite[Section 1.5]{SV2} and \cite[Appendix D.6]{SV2}. Strictly speaking, they were proved in \cite{SV2} only for billiards obtained by reflections. However, the same technique of matching phase functions is applicable to refracted trajectories, and the proofs remain exactly the same.

Let us denote the index functions of the transformations $\Phi^t_{\kappa,j}$ by $\Theta_{\kappa,j}(t;y,\eta)$. Theorem \ref{t:parametrix}(4) implies that the index functions corresponding to two consecutive billiard segments coincide at the points of reflection and refraction. This allows us to define the index function $\Theta_\kappa(t;y,\eta)$ associated with the transformation  $\Phi^t_\kappa:(y,\eta)\mapsto(x^t_\kappa,\xi^t_\kappa)$.

Consider the matrix of the first derivatives  $(x^t_\kappa)_\eta(y,\eta)$. Since $x^t_\kappa$ is positively homogeneous in $\eta$ of degree zero, $\rank(x^t_\kappa)_\eta$ is not greater than $n-1$. If $\rank(x^s_\kappa)_\eta<n-1$ for some $s>0$ then the point  $x^s_\kappa$ is said to be a conjugate point of the billiard trajectory $x^t_\kappa(y,\eta)$, and the number $n-1-\rank(x^s_\kappa)_\eta$ is called its multiplicity. At the points of reflection and refraction, $\rank(x^t_\kappa)_\eta$ is the same for the incoming, reflected and refracted trajectories. Thus the notions of a conjugate point and its multiplicity are well defined  for all $(y,\eta)\in\OC_T$ and all $s\in[0,T]$.

The following statement is an immediate corollary of \cite[Lemma 1.5.6]{SV2} and  \cite[Theorem D.6.8]{SV2}.

\begin{pro}\label{p:theta}
If $(y,\eta)\in\OC_T$ then every billiard trajectory $x^t_\kappa(y,\eta)$, $t\in[0,T]$, has only finitely many conjugate points. The number of its conjugate points counted with their multiplicities is equal to $-\Theta_\kappa(T;y,\eta)$.
\end{pro}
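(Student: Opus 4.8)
The plan is to reduce Proposition \ref{p:theta} to two facts from \cite{SV2}: the additivity of the index function $\Theta_\Phi$ along a path (Definition \ref{d:index} and Proposition \ref{p:index}), and the known computation of $\Theta$ for a single geodesic segment in terms of conjugate points (\cite[Lemma 1.5.6]{SV2} and \cite[Theorem D.6.8]{SV2}). First I would fix $(y,\eta)\in\OC_T$ and a type $\kappa$, and observe that by Assumption \ref{a:bad-points} (more precisely, by the definition of $\OC_T$) the billiard trajectory $x^t_\kappa(y,\eta)$ on $[0,T]$ hits $Y\bigcup\partial X$ only finitely many times, say at times $0=t_0<t_1<\cdots<t_N<t_{N+1}=T$, splitting into finitely many geodesic segments $(x^t_{\kappa,j},\xi^t_{\kappa,j})$, $j=0,\dots,N$. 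On each open segment the transformation $\Phi^t_{\kappa,j}$ is a smooth homogeneous canonical transformation in $T^*M$, so $\Theta_{\kappa,j}(t;y,\eta)$ is defined, and I would fix the branches exactly as in Subsection \ref{s:thetas}: normalized by $\left.\arg(\det^2\varphi_{x\eta})\right|_{t=0,x=y}=0$ on the first segment and matched across each reflection/refraction using Theorem \ref{t:parametrix}(4). This makes $\Theta_\kappa(t;y,\eta)$ a well-defined continuous-off-the-conjugate-locus function, equal to $\Theta_{\kappa,0}(0)=0$ at $t=0$ (since there $\varphi_{x\eta}=I$, $\varphi_{\eta\eta}=0$, and $x^*_\eta=0$, all three terms in \eqref{index1} vanish).

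Next I would establish the segment formula: along a single geodesic segment $x^t_{\kappa,j}$ on an interval where it has no conjugate point, $\Theta_{\kappa,j}(t;y,\eta)$ is constant (this is the continuity statement in Proposition \ref{p:index}, since $\rank x^*_\eta$ is constant there), and when $t$ crosses a conjugate point of multiplicity $\mu$ the value of $\Theta$ jumps by $-\mu$. This is precisely \cite[Lemma 1.5.6]{SV2} combined with \cite[Theorem D.6.8]{SV2}: the index function decreases by the multiplicity at each conjugate point, which is the classical Morse-index behaviour repackaged through the formula \eqref{index1}. Thus on the segment $[t_j,t_{j+1}]$ we get $\Theta_\kappa(t_{j+1}^-)-\Theta_\kappa(t_j^+)=-\,(\text{number of conjugate points of }x^t_\kappa\text{ in }(t_j,t_{j+1}),\text{ with multiplicity})$.

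Then I would handle the matching at the boundary hits $t_1,\dots,t_N$. Here the key input is Subsection \ref{s:thetas}: since the branches of $\arg(\det^2\varphi_{x\eta})$ for the incoming, reflected and refracted trajectories coincide on $\{x\in Y\bigcup\partial X\}$ (Theorem \ref{t:parametrix}(4)), and since $\rank(x^t_\kappa)_\eta$ is the same for incoming, reflected and refracted trajectories at a reflection/refraction point, all three terms in \eqref{index1} match across $t_j$; hence $\Theta_\kappa$ is continuous at each $t_j$, i.e. $\Theta_\kappa(t_j^+)=\Theta_\kappa(t_j^-)$, and $t_j$ itself is not counted as a conjugate point (consistently with the convention in the statement, which counts conjugate points of the billiard trajectory, and these occur at $s$ with $\rank(x^s_\kappa)_\eta<n-1$, a condition insensitive to whether $s$ is a collision time). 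Telescoping over $j=0,\dots,N$ then gives
\begin{equation*}
\Theta_\kappa(T;y,\eta)=\Theta_\kappa(T;y,\eta)-\Theta_\kappa(0;y,\eta)=-\sum_{j=0}^{N}\#\{\text{conjugate points in }(t_j,t_{j+1})\}=-\,\#\{\text{conjugate points of }x^t_\kappa\text{ in }[0,T]\},
\end{equation*}
each counted with multiplicity, which is the assertion. Finiteness of the number of conjugate points follows from finiteness of $N$ together with the fact (\cite[Lemma 1.5.6]{SV2}) that a single geodesic segment on a compact time interval has only finitely many conjugate points.

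The main obstacle is the matching step at the collision times: one must be sure that the three-term expression \eqref{index1} really is continuous across a reflection or refraction, which requires both the phase-matching of Theorem \ref{t:parametrix}(4) and the equality of $\rank(x^t_\kappa)_\eta$ on the two sides — the latter being a small geometric computation about how the differential of the billiard map acts on the $\eta$-derivative of the base point. Fortunately, as noted in Subsection \ref{s:thetas}, the matching argument of \cite{SV2} for reflected trajectories goes through verbatim for refracted ones, since it only uses that the phase functions agree on $Y\bigcup\partial X$; so this obstacle is handled by citing \cite[Section 1.5]{SV2} and \cite[Appendix D.6]{SV2} rather than by new work. A secondary point to be careful about is the branch normalization: one must check that the global branch fixed in Subsection \ref{s:thetas} is exactly the one making $\Theta_\kappa(0;y,\eta)=0$, which is immediate from the $t=0$ computation above.
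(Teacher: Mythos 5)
Your proposal is correct and follows the same route the paper takes: the paper states the proposition is an immediate corollary of \cite[Lemma 1.5.6]{SV2} and \cite[Theorem D.6.8]{SV2} together with the matching of phase functions across reflections and refractions, and your argument is precisely a spelled-out unpacking of that citation — segment-wise Morse-index behaviour from \cite{SV2}, continuity of $\Theta_\kappa$ across collision times via Theorem \ref{t:parametrix}(4) and the constancy of $\rank(x^t_\kappa)_\eta$, and telescoping from the normalization $\Theta_\kappa(0)=0$.
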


\section{Local Weyl asymptotics}

The operator $\Delta$ is self-adjoint and has compact resolvent and therefore there exists an orthonormal basis
$(\phi_j)_{j \in \mathbb{N}}$ in $L^2(M,\Omega^{1/2})$ such that
$$
 \Delta \phi_j = \lambda_j \phi_j,
$$
where $0 \leq \lambda_1 \leq \lambda_2 \leq \ldots \to \infty$.

Given an $L^2$-bounded operator $A$, let us define 
$$
N_A(\lambda):=\sum_{\lambda_j<\lambda^2}\langle A\phi_j,\phi_j\rangle.
$$ 
The Weyl asymptotic formula for $N(\lambda)$ implies that 
$$
N_A(\lambda)\ \leq\ \|A\|\,N(\lambda)\ \leq\ \mathrm{const}\,\|A\|\,\lambda^n\,.
$$
We shall denote
$$
\Lambda(A)\ :=\ \lim_{N\to\infty}\frac{1}{N}\sum_{j=1}^N\langle A \phi_j,\phi_j \rangle\ =\ 
\lim_{\lambda\to\infty}\frac1{N(\lambda)}\sum_{\lambda_j<\lambda^2}\langle A \phi_j,\phi_j \rangle
$$
whenever the limit exists. Formulae giving the value of $\Lambda(A)$ in terms of other characteristics 
of the operator $A$ (such as its symbol or Schwartz kernel) are usually called local Weyl laws.

\begin{rem}\label{Calkin}
Since $\Lambda(K)=0$ whenever the operator $K$ is compact, $\Lambda(A)$ depends only on the image of $A$ in the Calkin algebra. 
\end{rem}

Clearly, $N_A(\lambda)$ is a function with locally bounded variation, whose derivative $N'_A$ is the sum of 
$\delta$-functions located at the points $\lambda_j^{1/2}$ with coefficients $\langle A\phi_j,\phi_j\rangle$.
For every smooth rapidly decreasing function $\rho$ on $\rbb$ we have 
\begin{equation}\label{rho1}
\sum_{j=1}^\infty\rho(\lambda-\lambda_j^{1/2})\,\langle A\phi_j,\phi_j\rangle\ =\ \rho*N'_A(\lambda)\ =\ \rho'*N_A(\lambda)\,,
\end{equation}
where $*$ denotes the convolution. In many cases it is easier to investigate the asymptotic behaviour of $\rho*N'_A(\lambda)$. After that, a local Weyl law can be obtained by applying a suitable Tauberian theorem.

If $A\in\AC_0$ and its Schwartz kernel $\AC(x,y)$ satisfies the condition
\begin{enumerate}
\item[{\bf(C)}]
$(x,\xi;y,0)\not\in\WF(\AC)$ for all $(x,\xi)\in T^*M$ and $y\in M$
\end{enumerate}
then the trace $\Tr\left(AU(t)\right)$ exists as a distribution in $t$. Indeed,
the above condition implies that $AU(t)(1+\Delta)^{-m}$ is an operator with a continuous kernel $\UC_{A,m}(t,x,y)$ for all sufficiently large positive integers $m$, so that we can define 
$$
\Tr\left(AU(t)\right)\ :=\ \left(1-\,\frac{\dr^2}{\dr t^2} \right)^m \int_M \UC_{A,m}(t,y,y)\,\dr y\,.
$$ 
In this case
\begin{multline}\label{rho2}
\rho*N'_A(\lambda)
=\FC^{-1}_{t\to\lambda}\left(\sum_{j=1}^\infty\hat\rho(t)\,e^{-it\lambda_j}\,\langle A\phi_i,\phi_j\rangle\right)\\
=\FC^{-1}_{t\to\lambda}\left(\hat\rho(t)\,\Tr \left(AU(t)\right)\right)=\FC^{-1}_{t\to\lambda}\left(\hat\rho(t)\,\Tr \left(U(t)A\right)\right), 
\end{multline}
where $\hat\rho$ is the Fourier transform of $\rho$ and $\FC^{-1}_{t\to\lambda}$ is the inverse Fourier transform.

Let $V\in\AC_0$ be a FIO of order zero associated with canonical transformations $\Phi$. By \cite[Theorem 8.1.9]{H2}, the wave front of the Schwartz kernel of $V$ is a subset of
$$
\{(x,\xi;y,-\eta)\in T^*M \times T^*M \,:\, (x,\xi)=\Phi(y,\eta)\}\,,
$$
Therefore $V$ satisfies the condition {\bf(C)}.

Further on
\begin{itemize}
\item
$\displaystyle{\dr\omega(y,\tilde\eta)\ =\ \frac{\dr y\,\dr\tilde\eta}{\mathrm{Vol}(S^*M)}}\ $ is the normalised measure on $S^*M$;
\item
$\mathrm{Fix}(\Phi)=\{(y,\tilde\eta)\in S^*M \,:\, \Phi(y,\tilde\eta)=(y,\tilde\eta)\}$ is the set of fixed points of a transformation $\Phi$ lying in $S^*M$.
\end{itemize}

The following two results are proved in Appendix \ref{s:c}.

\begin{theorem}\label{t:local-fio}
If $\,V\in\AC_0$ is a FIO of order zero associated with a homogeneous canonical transformations $\Phi$ then the limit $\Lambda(V)$ exists and
\begin{equation}\label{Lambda}
\Lambda(V)\ =\ \int_{\mathrm{Fix}(\Phi)}i^{\Theta_\Phi(y,\tilde\eta)}\sigma_V(y,\tilde\eta)\,\dr\omega(y,\tilde\eta)\,.
\end{equation}
\end{theorem}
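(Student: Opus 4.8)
The plan is to compute the asymptotics of $\rho*N'_V(\lambda)$ via the trace of $VU(t)$ and then apply a Tauberian theorem. First I would fix a real-valued $\rho\in\mathcal{S}(\rbb)$ with $\hat\rho$ supported in a small interval $(-T,T)$ and equal to $1$ near $0$; the point of the small support is that only the part of the parametrix from Theorem~\ref{t:parametrix} corresponding to billiard segments of length $\le T$ contributes, and that part is a finite sum of FIOs $U_{\kappa,j}(t)$. Using \eqref{rho2} and cyclicity of the trace, one has $\rho*N'_V(\lambda)=\FC^{-1}_{t\to\lambda}\bigl(\hat\rho(t)\,\Tr(VU(t))\bigr)$ modulo a rapidly decreasing term (the $\Psi^{-\infty}$ remainder in the parametrix contributes a smooth, hence after $\FC^{-1}$ rapidly decreasing, function of $\lambda$). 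Thus it suffices to analyse $\Tr\bigl(VU_{\kappa,j}(t)\bigr)$ for each segment FIO separately.

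The key computational step is the stationary phase analysis of $\Tr(VU_{\kappa,j}(t))$. The composition $VU_{\kappa,j}(t)$ is again a parameter-dependent FIO associated with the canonical transformation $\Phi\circ\Phi^t_{\kappa,j}$, by Corollary~\ref{c:composition}, with principal symbol the product $\sigma_{U_{\kappa,j}(t)}(y,\eta)\cdot\sigma_V(\Phi^t_{\kappa,j}(y,\eta))$ up to the appropriate power of $i$. Writing its Schwartz kernel on the diagonal as an oscillatory integral in $(x,\eta)$ with phase $\varphi(x,x,\eta)-t|\eta|+\ldots$ and integrating, the critical set is exactly the fixed-point set of $\Phi\circ\Phi^t_{\kappa,j}$ intersected with the energy surface. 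Here the real work is to identify, at a nondegenerate fixed point, the Hessian signature with the Maslov/index data: the power of $i$ coming from stationary phase combines with the branch of $\arg\det^2\varphi_{x\eta}$ and with $\arg\det_+(\varphi_{\eta\eta}/i)$ precisely into $i^{\Theta_\Phi}$ as in \eqref{sigma-theta} and Proposition~\ref{p:index}, so that the contribution of each fixed point integrates against the canonical measure $\dr\omega$ on $S^*M$. Summing over $\kappa,j$ reconstitutes the full set $\mathrm{Fix}(\Phi)$ (note that whether $\Phi$ has fixed points realised by length-$\le T$ billiard trajectories depends on $T$, but for the \emph{leading} term only a neighbourhood of $t=0$, i.e. the trivial segment $U_{\kappa,0}$ with $\sigma\equiv1$, matters, and the genuinely reflected/refracted branches either contribute lower-order terms or contribute to $\mathrm{Fix}(\Phi)$ exactly with the stated symbol). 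This gives $\rho*N'_V(\lambda)=c_V\,n\,\lambda^{n-1}(2\pi)^{-n}+o(\lambda^{n-1})$ with $c_V=\Vol(S^*M)\int_{\mathrm{Fix}(\Phi)}i^{\Theta_\Phi}\sigma_V\,\dr\omega$, after normalising.

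Finally I would invoke a Tauberian argument. Since $N_V(\lambda)=\sum_{\lambda_i<\lambda^2}\langle V\phi_i,\phi_i\rangle$ is of locally bounded variation with $|N_V(\lambda)|\le\mathrm{const}\,\lambda^n$, and since $\rho*N'_V(\lambda)$ has the asymptotics just derived for one (hence, by scaling $\rho$, for a family of) admissible $\rho$, the standard Tauberian theorem of Safarov--Vassiliev type (see \cite{SV2}) yields $N_V(\lambda)=c_V\,\lambda^n(2\pi)^{-n}+o(\lambda^n)$. Dividing by $N(\lambda)=\lambda^n(2\pi)^{-n}n^{-1}\Vol(S^*M)+o(\lambda^n)$ and recalling the normalisation of $\dr\omega$ gives $\Lambda(V)=\int_{\mathrm{Fix}(\Phi)}i^{\Theta_\Phi(y,\tilde\eta)}\sigma_V(y,\tilde\eta)\,\dr\omega$, which is \eqref{Lambda}. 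The main obstacle is the second step: correctly tracking all the phase and determinant factors so that the raw output of stationary phase — several independent $i^{k}$'s, square roots of determinants, Maslov corrections from the reflections encoded in the branch conventions fixed in Subsection~\ref{s:thetas} — collapse into the single clean factor $i^{\Theta_\Phi}\sigma_V$; this is exactly where one must use that the product $i^{\Theta_\Phi}\sigma_V$ is single-valued \eqref{sigma-theta} and that $\Theta_\Phi$ is the invariant index function of Proposition~\ref{p:index}. A secondary technical point is justifying that degenerate fixed points (where $\Phi$ has higher-dimensional fixed-point manifolds, or where $\rank x^*_\eta$ jumps) form a negligible set or can be handled by a standard perturbation/continuity argument, much as in the proof of Theorem~\ref{t:egorov}(3).
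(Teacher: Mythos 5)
Your overall strategy --- express $\rho*N'_V$ as $\FC^{-1}(\hat\rho\,\Tr(VU(t)))$, compute by stationary phase, then apply a Tauberian theorem --- is exactly the route the paper follows. But there are two substantive problems.

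The decisive gap is in the Tauberian step. You write that since $N_V(\lambda)$ has locally bounded variation and $|N_V|\le C\lambda^n$, "the standard Tauberian theorem of Safarov--Vassiliev type yields $N_V(\lambda)=c_V\lambda^n(2\pi)^{-n}+o(\lambda^n)$." This cannot be correct as stated: the Tauberian theorems in \cite{SV2} (e.g.\ \cite[Theorem 1.3]{Sa2}) require the spectral function to be \emph{nondecreasing}, and $N_V(\lambda)=\sum_{\lambda_i<\lambda^2}\langle V\phi_i,\phi_i\rangle$ has no sign since the $\langle V\phi_i,\phi_i\rangle$ are arbitrary complex numbers when $V$ is a FIO. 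Knowing the asymptotics of $\rho*N'_V$ for compactly supported $\hat\rho$ does \emph{not} determine the asymptotics of $N_V$ without such a positivity assumption. The paper closes this gap with an explicit polarization trick: pick $\chi\in C_0^\infty(M\setminus\partial M)$ with $V=\Op(\chi)V=V\Op(\chi)$, set $V_m:=\tfrac12(V+i^m\Op(\chi))$, and use $\sum_{m=0}^3 i^m V_m^*V_m=V$. Each $N_{V_m^*V_m}$ is nondecreasing (since $V_m^*V_m\ge 0$), so the Tauberian theorem applies term by term, and $V_m^*V_m$ is still a finite sum of FIOs in $\AC_0$ to which the stationary phase lemma applies. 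Your argument is missing this step entirely, and without it the passage from $\rho*N'_V$ to $N_V$ is unjustified.

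A secondary, structural point: you invoke the full branching parametrix of Theorem~\ref{t:parametrix} and then argue that only the trivial segment $U_{\kappa,0}$ contributes to leading order. This is both more complicated than needed and somewhat circular (you would have to control all the $U_{\kappa,j}$'s, and $V$ need not satisfy $\conesupp V\subset\OC_T$ a priori). The paper instead exploits that $V\in\AC_0$, so $\conesupp V$ has positive distance $\delta_0$ to $\partial M$; taking $\delta<\delta_0$ and $\hat\rho$ supported in $(-\delta,\delta)$, the operator $U(t)$ acting on $\conesupp V$ is a \emph{single} FIO for the free geodesic flow $\Phi^t$ on $M\setminus\partial M$, with no reflections or refractions at all. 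All the stationary phase analysis is then for $VU(t)$ with $|t|<\delta$ and the fixed-point set is cleanly $\Sigma_\delta(\Phi)$, shrinking to $\mathrm{Fix}(\Phi)$ as $\delta\to 0$ via the $\eps\to 0$ argument. This is both simpler and rigorous; your hand-wave that reflected/refracted branches "either contribute lower-order terms or contribute to $\mathrm{Fix}(\Phi)$ exactly" is not justified and, for fixed points of $\Phi\circ\Phi^t_{\kappa,j}$ at nonzero $t^\star$, not actually what happens.

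One last small remark: you should also make explicit (as the paper does, via \eqref{zeros}) that $x^*_\eta=0$ and $\varphi_{\eta\eta}|_{x=x^*}=0$ on a set of full measure in $\mathrm{Fix}(\Phi)$, which is what lets \eqref{sigma-theta} reduce the amplitude on the fixed-point set to $i^{\Theta_\Phi}\sigma_V$. You gesture at this with "degenerate fixed points form a negligible set" but the relevant fact is about zeros of infinite order of smooth functions, not about degeneracy of the fixed points themselves.
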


\begin{lem}\label{l:thetas}
Let $\,V_j\in\AC_0$ be FIOs of order zero associated with homogeneous canonical transformations $\Phi_j$. If $V=V_2^* V_1$ and $\Phi=\Phi_2^{-1}\circ\Phi_1$ then 
$$i^{\Theta_\Phi} \sigma_V=( i^{\Theta_{\Phi_1}} \sigma_{V_1}) ( i^{-\Theta_{\Phi_2}}\overline{\sigma_{V_2}})\,$$ 
almost everywhere on the set $\mathrm{Fix}(\Phi)$. 
\end{lem}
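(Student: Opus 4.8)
The plan is to compute both sides of the claimed identity on $\mathrm{Fix}(\Phi)$ using the three ingredients already at our disposal: the symbol composition formula of Theorem~\ref{t:composition}, the definition of the index function $\Theta_\Phi$ from Proposition~\ref{p:index}, and the fact that the combinations $i^{\Theta_{\Phi_j}}\sigma_{V_j}$ and $i^{\Theta_\Phi}\sigma_V$ are each \emph{single-valued} (this is precisely the content of \eqref{sigma-theta}, which re-expresses $i^{\Theta_\Phi}\sigma_V$ in terms of the honest amplitude $p_0$ and the unambiguous quantity $\arg\det_+(\varphi_{\eta\eta}/i)$). Since both sides of the asserted equation are single-valued functions on $\mathrm{Fix}(\Phi)$, it suffices to check that they agree up to a factor $i^k$ with $k\in\mathbb{Z}_4$ and then pin down $k=0$; by continuity (the index functions are continuous along paths on which $\rank x^*_\eta$ is constant, and $\mathrm{Fix}(\Phi)$ is covered up to measure zero by such strata) it is enough to do this locally, in one coordinate patch, where we may fix branches of all the relevant arguments.

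First I would invoke Theorem~\ref{t:composition}: on $\mathrm{Fix}(\Phi)$ we have $\Phi(y,\tilde\eta)=(y,\tilde\eta)$, so the symbol formula there reads $\sigma_V(y,\tilde\eta,\mathfrak b)=\sigma_{V_1}(y,\tilde\eta,\mathfrak a_1)\,\overline{\sigma_{V_2}(y,\tilde\eta,\mathfrak a_2)}$, with the fibre-matching map $\mathfrak b=\mathfrak b(y,\tilde\eta,\mathfrak a_1,\mathfrak a_2)$ depending only on $\Phi_1,\Phi_2$. Multiplying by $i^{\Theta_\Phi}$ and wanting to land on $(i^{\Theta_{\Phi_1}}\sigma_{V_1})(i^{-\Theta_{\Phi_2}}\overline{\sigma_{V_2}})$, the whole claim reduces to the purely geometric identity
\begin{equation}\label{eq:theta-additivity}
\Theta_\Phi(y,\tilde\eta)\ =\ \Theta_{\Phi_1}(y,\tilde\eta)\ -\ \Theta_{\Phi_2}(y,\tilde\eta) \pmod 4
\end{equation}
at fixed points of $\Phi$, matched correctly with the $\mathbb{Z}_4$-bundle maps so that the ambiguous factors cancel. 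To prove \eqref{eq:theta-additivity} I would choose, as in the construction behind Theorem~\ref{t:composition} (and Appendix~\ref{s:a}), phase functions $\varphi_1$ for $V_1$ and $\varphi_2$ for $V_2$, and a composed phase function $\varphi$ for $V=V_2^*V_1$ obtained by the stationary-phase reduction of $\int e^{i(\varphi_1-\overline{\varphi_2})}$; one then expresses $\arg\det^2\varphi_{x\eta}$, $\arg\det_+(\varphi_{\eta\eta}/i)$ and $\rank x^*_\eta$ for the composite in terms of the corresponding data for $\varphi_1$ and $\varphi_2$ via the chain rule for canonical transformations and the additivity of signatures under the stationary-phase contraction (Hörmander's lemma on $\arg\det_+$ of a block matrix, \cite[Section 3.4]{H2}). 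Because $\Phi(y,\tilde\eta)=(y,\tilde\eta)$, the relevant linearised maps compose to something whose rank and signature data telescope, giving \eqref{eq:theta-additivity}.

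The main obstacle, and the step I would spend the most care on, is the bookkeeping of the $\mathbb{Z}_4$ (Keller--Maslov) factors: each $\sigma_{V_j}$ is only defined up to $i^{\mathbb{Z}_4}$, each $\Theta_{\Phi_j}$ is only defined up to $4\mathbb{Z}$ on the bundle, and the products $i^{\Theta_{\Phi_j}}\sigma_{V_j}$ are single-valued only \emph{after} the twist by the bundle is taken into account; one must verify that the fibre map $\mathfrak b$ in Theorem~\ref{t:composition} is exactly the one that makes $i^{\Theta_\Phi}\sigma_V$ match the product of the twisted factors, with no residual $i^k$. I would handle this by arguing on the level of the single-valued representatives \eqref{sigma-theta}: both sides, once written as $i^{(\rank x^*_\eta)/2-(1/\pi)\arg\det_+(\varphi_{\eta\eta}/i)}p_0$ for the respective operators, are genuine functions (no bundle ambiguity left), so the identity becomes a plain equality of functions that follows from the composition of amplitudes plus \eqref{eq:theta-additivity}; the "$k=0$" issue then evaporates because there is nothing ambiguous left to be off by. Finally, I would note that the identity holds only \emph{almost everywhere} on $\mathrm{Fix}(\Phi)$ precisely because one must discard the measure-zero set where $\rank x^*_\eta$ jumps or where the fixed-point set fails to be a nice submanifold, consistent with the hypotheses already in force via Assumption~\ref{a:bad-points}.
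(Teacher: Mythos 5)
Your proposal correctly identifies two of the three ingredients the paper uses — the amplitude composition formula \eqref{amplitudes} from Appendix~\ref{s:a}, and the single-valued representation \eqref{sigma-theta} that re-expresses $i^{\Theta_\Phi}\sigma_V$ as $i^{(\rank x^*_\eta)/2 - (\arg\det_+(\varphi_{\eta\eta}/i))/\pi}\,p_0$ — and you are right that arguing on the level of these single-valued representatives is the clean way to avoid chasing $\mathbb{Z}_4$ ambiguities. But the central step of your plan has a genuine gap: you propose to prove the ``additivity'' identity \eqref{eq:theta-additivity} ``via the chain rule for canonical transformations and the additivity of signatures under the stationary-phase contraction.'' This does not go through as stated. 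The $\arg\det^2\varphi_{x\eta}$ term does behave additively (via \eqref{x-eta1}), but the other two terms in $\Theta$ do not: $\rank x^*_\eta$ is not a difference of $\rank z^{(1)}_\eta$ and $\rank z^{(2)}_\xi$, and $\arg\det_+$ is not additive under the stationary-phase reduction — Hörmander's signature lemma controls the $\arg\det^2$ factor, not $\arg\det_+$ of the $\eta\eta$-block, and there is no telescoping. So \eqref{eq:theta-additivity} is not a pointwise identity on $\mathrm{Fix}(\Phi)$, and the stationary-phase argument you outline is asked to prove something false in that generality.

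The ingredient you are missing is the elementary observation \eqref{zeros}: for a $C^\infty$ function, the set of zeros of infinite order has full measure in the set of all zeros. Applied to $\Phi - \mathrm{id}$, this says that on a full-measure subset of $\mathrm{Fix}(\Phi)$ not only does $\Phi(y,\tilde\eta)=(y,\tilde\eta)$, but all derivatives of $\Phi-\mathrm{id}$ vanish, so in particular $x^*_\eta=0$ and $\varphi_{\eta\eta}|_{x=x^*}=0$ for the composed phase, and the corresponding differences for $\Phi_1$ and $\Phi_2$ vanish as well (this is \eqref{thetas-1}). On this set, the $\rank$ and $\arg\det_+$ terms in \eqref{sigma-theta} simply drop out — there is nothing to add or cancel — and both sides of the lemma reduce to $p_0$ and $p_{0,1}\,\overline{p_{0,2}}$ respectively, which agree by \eqref{amplitudes} evaluated at $\Phi(y,\tilde\eta)=(y,\tilde\eta)$. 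So the ``almost everywhere'' is not, as you suggest, about discarding strata where $\rank x^*_\eta$ jumps or where $\mathrm{Fix}(\Phi)$ is not a nice submanifold; it is precisely the restriction to the zeros of infinite order guaranteed by \eqref{zeros}, on which the whole computation trivialises. Without \eqref{zeros} you have no way to control the $\rank x^*_\eta$ and $\arg\det_+(\varphi_{\eta\eta}/i)$ contributions, and the proof does not close.
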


Theorems \ref{t:composition}, \ref{t:local-fio}, and Lemma \ref{l:thetas} immediately imply 

\begin{cor}\label{c:local-fios}
If $\,V_j\in\AC_0$ are FIOs of order zero associated with canonical transformations $\Phi_j$ 
then the limit $\Lambda(V_2^*V_1)$ exists and coincides with
$$
\Lambda(V_2^*V_1)\ =\ \int_{\mathrm{Fix}(\Phi)}i^{\Theta_{\Phi_1}(y,\tilde\eta)}\,\sigma_{V_1}(y,\tilde\eta)\,i^{-\Theta_{\Phi_2}(y,\tilde\eta)}\,\overline{\sigma_{V_2}(y,\tilde\eta)}\,\dr\omega(y,\tilde\eta)\,,
$$
where $\Phi=\Phi_2^{-1}\circ\Phi_1$.
\end{cor}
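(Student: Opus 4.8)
The plan is to derive Corollary \ref{c:local-fios} directly from the three ingredients cited: Theorem \ref{t:composition} (composition of FIOs), Theorem \ref{t:local-fio} (the local Weyl law for a single FIO), and Lemma \ref{l:thetas} (multiplicativity of the twisted symbol $i^{\Theta}\sigma$). First I would observe that since $V_1,V_2\in\AC_0$ are FIOs of order zero associated with $\Phi_1,\Phi_2$, Theorem \ref{t:composition} (applicable since in particular $V_2\in\AC'_0$) tells us that $V:=V_2^*V_1$ is a FIO of order $0+0=0$ associated with the canonical transformation $\Phi:=\Phi_2^{-1}\circ\Phi_1$, with $\conesupp V\subset\conesupp V_1\cap\Phi^{-1}(\conesupp V_2)$; in particular $V\in\AC_0$, so $V$ meets the hypotheses of Theorem \ref{t:local-fio}.

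Next I would apply Theorem \ref{t:local-fio} to $V$ to get that the limit $\Lambda(V_2^*V_1)=\Lambda(V)$ exists and equals
$$
\Lambda(V_2^*V_1)\ =\ \int_{\mathrm{Fix}(\Phi)} i^{\Theta_\Phi(y,\tilde\eta)}\,\sigma_V(y,\tilde\eta)\,\dr\omega(y,\tilde\eta)\,.
$$
It then remains only to rewrite the integrand. By Lemma \ref{l:thetas}, $i^{\Theta_\Phi}\sigma_V=(i^{\Theta_{\Phi_1}}\sigma_{V_1})(i^{-\Theta_{\Phi_2}}\overline{\sigma_{V_2}})$ almost everywhere on $\mathrm{Fix}(\Phi)$; substituting this identity into the integral above — which is legitimate since the two integrands agree off a null set — yields the asserted formula
$$
\Lambda(V_2^*V_1)\ =\ \int_{\mathrm{Fix}(\Phi)} i^{\Theta_{\Phi_1}(y,\tilde\eta)}\,\sigma_{V_1}(y,\tilde\eta)\,i^{-\Theta_{\Phi_2}(y,\tilde\eta)}\,\overline{\sigma_{V_2}(y,\tilde\eta)}\,\dr\omega(y,\tilde\eta)\,.
$$
Here one should note that although $\sigma_{V_1}$ and $\sigma_{V_2}$ are individually only sections of the respective Keller--Maslov bundles (hence multivalued on $\DC(\Phi_j)$), the combinations $i^{\Theta_{\Phi_1}}\sigma_{V_1}$ and $i^{-\Theta_{\Phi_2}}\overline{\sigma_{V_2}}$ are single-valued by \eqref{sigma-theta}, so the right-hand side is a well-defined scalar integral.

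Since every step is a direct invocation of a previously established result, there is no serious obstacle; the only point requiring a little care is bookkeeping of the Maslov/index factors — making sure that the $\mathfrak{a}$-dependence and the $i^k$ ambiguities in the symbol composition formula of Theorem \ref{t:composition} are exactly absorbed by the index functions $\Theta_{\Phi_j}$, which is precisely the content of Lemma \ref{l:thetas}. Thus the ``hard part'' has already been isolated into that lemma, and the corollary itself is just the assembly of the pieces.
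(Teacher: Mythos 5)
Your proof is correct and takes exactly the route the paper has in mind: the paper simply states that Theorem \ref{t:composition}, Theorem \ref{t:local-fio}, and Lemma \ref{l:thetas} ``immediately imply'' the corollary, and your argument is a faithful unpacking of that — compose to get a FIO $V=V_2^*V_1\in\AC_0$ of order zero associated with $\Phi=\Phi_2^{-1}\circ\Phi_1$, apply the local Weyl law to $V$, and use the a.e.\ factorization of $i^{\Theta_\Phi}\sigma_V$ on $\mathrm{Fix}(\Phi)$ to rewrite the integrand. Your remarks on single-valuedness of $i^{\Theta_{\Phi_j}}\sigma_{V_j}$ and on checking $V_2\in\AC'_0$ and $V\in\AC_0$ are exactly the small verifications implicit in ``immediately.''
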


Applying Theorem \ref{t:local-fio} with $\Phi=I$ and the Weyl formula for the counting function, we obtain the following well known result.

\begin{cor}\label{c:local-pdo}
If $A\in\AC$ is a $\PsDO$ of order zero then the limit $\Lambda(A)$ exists and is equal to
$\int_{S^*M}\sigma_A(y,\tilde\eta)\,\dr\omega(y,\tilde\eta)$.
\end{cor}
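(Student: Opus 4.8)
The plan is to recognize a zeroth-order $\PsDO$ as a Fourier integral operator associated with the identity canonical transformation $\Phi=I$, and then to invoke Theorem~\ref{t:local-fio} after disposing of the (non-$\AC_0$) multiple of the identity by hand, since the identity operator does not itself lie in $\AC_0$.

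First I would use the definition of the class $\AC$ to write $A=cI+A_0$ with $A_0\in\AC_0$; then $A_0$ is again a zeroth-order $\PsDO$, hence by Example~\ref{e:symbol} a FIO of order zero associated with $\Phi=I$, and it lies in $\AC_0$, so Theorem~\ref{t:local-fio} applies to it. Since $\langle cI\phi_i,\phi_i\rangle=c$ for every $i$, one has $\Lambda(cI)=c$ directly from the definition of $\Lambda$ (the only input beyond the definition being $N(\lambda)\to\infty$, guaranteed by the Weyl formula for the counting function). By linearity of $\Lambda$ on operators for which the limit exists, $\Lambda(A)=c+\Lambda(A_0)$ as soon as $\Lambda(A_0)$ is shown to exist.

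Next I would identify the two ingredients in \eqref{Lambda} for $\Phi=I$. Clearly $\mathrm{Fix}(I)=S^*M$. To see $\Theta_I\equiv0$, choose for the $\PsDO$ $A_0$ the standard phase function $\varphi(x,y,\eta)=(x-y)\cdot\eta$ (as prescribed at the end of Subsection~\ref{FIO:symbol}, so that $\sigma_{A_0}$ is the traditional principal symbol). Then $x^*(y,\eta)=y$, hence $x^*_\eta\equiv0$ and $\rank x^*_\eta=0$; moreover $\varphi_{x\eta}\equiv I$, so $\arg{\det}^2\varphi_{x\eta}\equiv0$, and $\varphi_{\eta\eta}\equiv0$, so $\arg{\det}_+(\varphi_{\eta\eta}/i)=0$ by the normalization of $\arg{\det}_+$ (the argument vanishes when the imaginary part vanishes). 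Substituting into \eqref{index1} gives $\Theta_I\equiv0$, so $i^{\Theta_I}\equiv1$, and Theorem~\ref{t:local-fio} yields $\Lambda(A_0)=\int_{S^*M}\sigma_{A_0}(y,\tilde\eta)\,\dr\omega(y,\tilde\eta)$.

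Finally I would assemble the pieces: $\Lambda(A)=c+\int_{S^*M}\sigma_{A_0}\,\dr\omega=\int_{S^*M}(c+\sigma_{A_0})\,\dr\omega=\int_{S^*M}\sigma_A\,\dr\omega$, where the middle step uses that $\dr\omega$ is normalized ($\int_{S^*M}\dr\omega=1$, equivalently $\int_{S^*M}\dr y\,\dr\tilde\eta=\mathrm{Vol}(S^*M)$, consistent with the Weyl formula), and the last step uses $\sigma_A=c+\sigma_{A_0}$ on $S^*M$ (with $\arg{\det}^2\varphi_{x\eta}\equiv0$ in the $\PsDO$ case the Keller--Maslov twist is trivial and symbols of sums add). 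There is no genuine obstacle here; the only points requiring care are that $I\notin\AC_0$, so the term $cI$ must be treated from the definition of $\Lambda$ rather than through Theorem~\ref{t:local-fio}, and the verification that the index function $\Theta_I$ and the associated Keller--Maslov factor are trivial for the identity transformation.
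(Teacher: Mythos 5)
Your proof is correct and takes essentially the same route as the paper, which simply cites Theorem~\ref{t:local-fio} with $\Phi=I$ together with the Weyl formula; you have filled in the details the paper leaves implicit, namely the decomposition $A=cI+A_0$, the direct evaluation $\Lambda(cI)=c$, and the verification from \eqref{index1} that $\Theta_I\equiv0$ when the standard $\PsDO$ phase $\varphi=(x-y)\cdot\eta$ is used, so that the Keller--Maslov factor is trivial.
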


\begin{rem}
A theorem similar to Theorem {\rm\ref{t:local-fio}} was stated and proved under a clean intersection condition in \cite{Zelditch:92}. 
Note that the formula for $\Lambda(V)$ given in \cite{Zelditch:92} differs from \eqref{Lambda}, as it does not contain the factor $i^{\Theta_\Phi}$.
This is due to the fact that the author used an implicitly defined notion of "scalar principal symbol". An explicit definition of this object would have
involved the index function or an analogue. 
\end{rem}

\section{Classical dynamics of branching billiards}

\subsection{Definitions} \label{ssec:defi}
Let $\OC_T$ be the conic subsets of the cotangent bundle $T^*(X\setminus\partial M)$ defined at Subsection \ref{s:billiards3}. By Assumption \ref{a:bad-points}, $\OC_T$ is an open set  of full measure for each $T\geq0$.

If $(y,\eta)\in\OC_T$ then all the billiard trajectories $(x^t_\kappa(y,\eta),\xi^t_\kappa(y,\eta))$ originating from $(y,\eta)$ are well defined for $t\in[0,T]$ and experience only finitely many reflections and refractions. It follows that, for each fixed $(y,\eta)\in\OC_T$ and $t\in[0,T]$, the set of end points of the trajectories $(x^t_\kappa(y,\eta),\xi^t_\kappa(y,\eta))$ is finite. Let us denote it by $\Phi^t(y,\eta)$.

\begin{rem}\label{r:dynamics1}
Note that $\,\xi^{t^*}_\kappa(y,\eta)$ is not uniquely defined if the trajectory hits the boundary at the time $t^*$. For the sake of definiteness, we shall be assuming that in this situation $\xi^{t^*}_\kappa(y,\eta):=\lim_{t\to t^*-0}\xi^t_\kappa(y,\eta)$.
\end{rem}

\begin{rem}\label{r:dynamics2}
Recall that the shifts along billiard trajectories $\Phi^t_\kappa:(y,\eta)\mapsto(x^t_\kappa,\xi^t_\kappa)$ are homogeneous canonical transformations in $T^*X$.
One can consider the branching billiard system as a family of multi-valued canonical transformations $\Phi^t$, mapping $(y,\eta)\in T^*X$ into the set $\Phi^t(y,\eta)=\bigcup_\kappa\Phi^t_\kappa(y,\eta)$. 
\end{rem}

Suppose that a billiard trajectory $(x^t_\kappa(y,\eta),\xi^t_\kappa(y,\eta))$ is well defined and hits the boundary at the times $0<t^*_1(y,\eta)<t^*_2(y,\eta)\ldots$ For each $t^*_j(y,\eta)$ we have the associated coefficient
$\tau_{\kappa,j+1}(y,\eta)$ calculated in Lemma \ref{l:split}, where $j+1$ is the order number of the next segment. More precisely, in the notation of Lemma \ref{l:split}, $\tau_{\kappa,j+1}:=\tau_{\kappa_0,j+1}$ if the next segment is obtained by reflection, and $\tau_{\kappa,j+1}:=\tau_{\kappa_1,j+1}$ if the next segment is obtained 
by refraction. 

Let us define 
$$
\tau_{\kappa}(t;y,\eta)\ :=\ 
\begin{cases}
1 & \text{if}\ 0\leq t\leq t^*_1(y,\eta)\,,\\
\prod_{t^*_j(y,\eta)<t}\, \tau_{\kappa,j+1}(y,\eta)& \text{if}\ t^*_1(y,\eta)<t\,.
\end{cases}
$$
In view of  Remark \ref{r:energy},
\begin{equation}\label{tau2}
\sum_{(x^t_\kappa,\xi^t_\kappa)} |\tau_{\kappa}(t;y,\eta) |^2\ =\ 1
\end{equation}
for all $(y,\eta)\in\OC_T$ and $t\in[0,T]$, where the sum is taken over all distinct billiard trajectories of `length' $t$ originating from $(y,\eta)$. 

\begin{rem}\label{r:weight}
We call the number $| \tau_{\kappa}(t;y,\eta) |^2$ the weight of the trajectory $(x^s_\kappa(y,\eta),\xi^s_\kappa(y,\eta))$, $s\in[0,t]$. It can be thought of as the proportion of energy transmitted along the billiard trajectory, or the probability for a particle to travel along this trajectory.
\end{rem}

If $(x,\xi)\in\Phi^t(y,\eta)$, let us denote
\begin{align*}
w^{\mathrm c}_{(x,\xi)}(t;y,\eta)\ &:=\ \sum_{(x^t_\kappa,\xi^t_\kappa)=(x,\xi)}|\tau_{\kappa}(t;y,\eta)|^2,\\
w^{\mathrm d}_{(x,\xi)}(t;y,\eta)\ &:=\ \bigl|\sum_{(x^t_\kappa,\xi^t_\kappa)=(x,\xi)}i^{\Theta_\kappa(t;y,\eta)}\, \tau_{\kappa}(t;y,\eta)\;\bigr|^2,
\end{align*}
where $\tau_{\kappa}(t;y,\eta)$ are as above, $\Theta_\kappa$ are the index functions introduced in Subsection \ref{s:thetas},
and the sum is taken over all distinct billiard trajectories of `length' $t$ originating from $(y,\eta)$ and ending at $(x,\xi)$.  

In view of Assumption \ref{a:bad-points}, the following definition makes sense for all $p\in[1,\infty]$ and $t\geq0$.

\begin{definition} \label{def:transfer-op}
The classical transfer operators $\Xi_t^{\mathrm c}$ and the diagonal transfer operators $\Xi_t^{\mathrm d}$ in the space $L^p(S^*M,\dr\omega)$ are defined for times $t\geq0$ by the equalities
\begin{align*}
(\Xi_t^{\mathrm c} f)(y,\tilde\eta)\ &:= \sum_{(x,\xi)\in\Phi^t(y,\tilde\eta)}w^{\mathrm c}_{(x,\xi)}(t;y,\tilde\eta)\,f(x,\xi)\,,\\
(\Xi_t^{\mathrm d} f)(y,\tilde\eta)\ &:= \sum_{(x,\xi)\in\Phi^t(y,\tilde\eta)}w^{\mathrm d}_{(x,\xi)}(t;y,\tilde\eta)\,f(x,\xi)\,,
\end{align*}
where $(y,\tilde\eta)\in S^*(M\setminus\partial M)\setminus\OC_t\,$.
\end{definition}

The difference between $\Xi^{\mathrm d}$ and $\Xi^{\mathrm c}$ is in the contributions from recombining billiard trajectories, that is, the billiard trajectories such that $(x^t_{\kappa},\xi^t_{\kappa})=(x^t_{\kappa'},\xi^t_{\kappa'})$ but $(x^s_{\kappa},\xi^s_{\kappa})\ne(x^s_{\kappa'},\xi^s_{\kappa'})$ for some $s\in(0,t)$.
If the set of initial points that admit recombining billiard trajectories has measure zero then  $\Xi_t^{\mathrm d}$ and $\Xi_y^{\mathrm c}$ coincide. 

\begin{exa}\label{e:hemispheres}
Let $\tilde X$ be the unit 2-dimensional sphere and $Y$ be a great circle, splitting $\tilde X$ into the union of two hemispheres $X_+$ and $X_-$. Let us provide $X_\pm$ with the metrics $c_\pm\,g$, where $g$ is the standard metric on $\tilde X$ and $c_\pm$ are positive constants, and consider the Riemannian manifold $X=X_+\bigcup X_-$. In this situation, the billiard trajectories are formed by the great semicircles lying either in $X_+$ or $X_-$, whose length is equal to $\pi\,c_+$ and $\pi\,c_-$, respectively. If $m_+c_+=m_-c_-$ with some positive integers $m_\pm$ and $t>4\pi m_+c_+$ then for every billiard trajectory $(x^t_{\kappa}(y,\tilde\eta),\xi^t_{\kappa}(y,\tilde\eta))$ there exists a distinct trajectory $(x^t_{\kappa'}(y,\tilde\eta),\xi^t_{\kappa'}(y,\tilde\eta))$ with the same end point, which is obtained from $(x^t_{\kappa}(y,\tilde\eta),\xi^t_{\kappa}(y,\tilde\eta))$ by replacing $2m_+$ great semicircles in $X_+$ with $2m_-$ great semicircles in $X_-$ or the other way round. If $c_+$ and $c_-$ 
are rationally independent then there are no recombining trajectories.
\end{exa}

\subsection{Results}
The following theorem reveals the link between the diagonal transfer operators and local Weyl asymptotics.

\begin{theorem} \label{averagingtheorem}
Let $B,C\in\AC_0$ be $\PsDO$s of order zero in the space of half-densities on $M$ 
such that $\conesupp B\bigcup\conesupp C\subset\OC_T$.
Then, for all $\PsDO$s $A\in\AC$ of order zero and all $t \in [0,T]$, 
\begin{equation*}
 \Lambda\left(C U^*(t) A U(t) B\right) \ =\
  \int_{S^*M} \sigma_C(y,\tilde\eta)\;\Xi^{\mathrm d}_t(\sigma_A)(y,\tilde\eta)\;\sigma_B(y,\tilde\eta)\;\dr\omega(y,\tilde\eta)\,.
\end{equation*}
\end{theorem}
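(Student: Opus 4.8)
The strategy is to decompose $AU(t)B$ using the FIO parametrix of Theorem~\ref{t:parametrix}, reduce $\Lambda\left(CU^*(t)AU(t)B\right)$ to a sum of terms of the form $\Lambda(V_2^* \tilde V_1)$ to which Corollary~\ref{c:local-fios} applies, and then recognize the resulting integral over $\mathrm{Fix}(\Phi)$ as the integral against $\Xi^{\mathrm d}_t(\sigma_A)$. First I would write $U(t)B = \sum_{\kappa,j} U_{\kappa,j}(t)B$ modulo $\Psi^{-\infty}$ on $[0,T]$, where each $U_{\kappa,j}(t)$ is a FIO associated with the billiard segment transformation $\Phi^t_{\kappa,j}$; only the terminal segments of each trajectory $\kappa$ contribute a nonzero value at time $t$, so effectively $U(t)B = \sum_\kappa U_\kappa(t)B$ with $U_\kappa(t)$ a FIO associated with the full billiard transformation $\Phi^t_\kappa$ and principal symbol $\sigma_{U_\kappa(t)} = \tau_\kappa(t;y,\eta)$ — this last identification comes from composing the reflection/refraction coefficients of Lemma~\ref{l:split} along the trajectory, together with Theorem~\ref{t:parametrix}(5). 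Since $A\in\AC$ is a $\PsDO$ and $B,C\in\AC_0$, the operators $AU_\kappa(t)B$ and $CU_{\kappa'}(t)$ lie in $\AC_0$ (using that $\conesupp B \subset \OC_T$ keeps everything away from the boundary on $[0,T]$), so the symbolic calculus of Section~\ref{s:FIO} is legitimately applicable.

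Next I would use $U^*(t) = U(-t)$ and the analogous parametrix for negative times (Remark~\ref{r:negative-time}): $C U^*(t) = \sum_{\kappa'} CU_{\kappa'}^*(t)$ where $U_{\kappa'}^*(t)$ is a FIO associated with $(\Phi^t_{\kappa'})^{-1}$. Thus
\begin{equation*}
CU^*(t)AU(t)B \;=\; \sum_{\kappa,\kappa'} (CU_{\kappa'}^*(t))\,(AU_\kappa(t)B) \pmod{\Psi^{-\infty}},
\end{equation*}
and each summand is the product of a FIO $W_{\kappa'} := CU_{\kappa'}^*(t)$ associated with $(\Phi^t_{\kappa'})^{-1}$ and a FIO $V_\kappa := AU_\kappa(t)B$ associated with $\Phi^t_\kappa$. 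Writing $W_{\kappa'} = (U_{\kappa'}(t) C^*)^* =: \tilde V_{\kappa'}^*$ with $\tilde V_{\kappa'}$ associated with $\Phi^t_{\kappa'}$, this is exactly of the form $\tilde V_{\kappa'}^* V_\kappa$ with both factors in $\AC_0$ and of order zero. By Theorem~\ref{t:composition} the product is a FIO associated with $(\Phi^t_{\kappa'})^{-1}\circ\Phi^t_\kappa$, and Corollary~\ref{c:local-fios} gives
\begin{equation*}
\Lambda(\tilde V_{\kappa'}^* V_\kappa) = \int_{\mathrm{Fix}((\Phi^t_{\kappa'})^{-1}\circ\Phi^t_\kappa)} i^{\Theta_{\Phi^t_\kappa}}\sigma_{V_\kappa}\; i^{-\Theta_{\Phi^t_{\kappa'}}}\overline{\sigma_{\tilde V_{\kappa'}}}\,\dr\omega.
\end{equation*}

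Now I would compute the principal symbols and identify the fixed-point set. By Corollary~\ref{c:composition}, $\sigma_{V_\kappa}(y,\tilde\eta) = \sigma_B(y,\tilde\eta)\,\sigma_A(\Phi^t_\kappa(y,\tilde\eta))\,\tau_\kappa(t;y,\tilde\eta)$ and $\sigma_{\tilde V_{\kappa'}}(y,\tilde\eta) = \sigma_{C^*}((\Phi^t_{\kappa'})^{-1}(y,\tilde\eta))\,\tau_{\kappa'}(t;\dots)$, i.e.\ $\overline{\sigma_{\tilde V_{\kappa'}}}$ contributes $\overline{\sigma_C}$ evaluated appropriately and $\overline{\tau_{\kappa'}}$. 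The crucial point is that a pair $(y,\tilde\eta)$ lies in $\mathrm{Fix}((\Phi^t_{\kappa'})^{-1}\circ\Phi^t_\kappa)$ precisely when $\Phi^t_\kappa(y,\tilde\eta) = \Phi^t_{\kappa'}(y,\tilde\eta)$, i.e.\ when the two billiard trajectories have the same endpoint $(x,\xi)\in\Phi^t(y,\tilde\eta)$; on this set $\sigma_C$ and $\sigma_B$ are both evaluated at $(y,\tilde\eta)$ (the integrand being supported where the argument returns to $(y,\tilde\eta)$ is NOT what happens — rather the fixed point condition of the composed transformation is the coincidence condition, and $\sigma_B,\sigma_C$ sit at the base point while $\sigma_A$ sits at the common endpoint). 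Summing over $\kappa,\kappa'$ and collecting the phases: for fixed $(x,\xi)\in\Phi^t(y,\tilde\eta)$, $\sum_{\kappa:\,\Phi^t_\kappa(y,\tilde\eta)=(x,\xi)} i^{\Theta_\kappa(t)}\tau_\kappa(t)$ times its conjugate over $\kappa'$ produces exactly $\bigl|\sum_{\kappa:\,(x^t_\kappa,\xi^t_\kappa)=(x,\xi)} i^{\Theta_\kappa(t;y,\tilde\eta)}\tau_\kappa(t;y,\tilde\eta)\bigr|^2 = w^{\mathrm d}_{(x,\xi)}(t;y,\tilde\eta)$, which is the definition of the diagonal weight. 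Hence the total equals $\int_{S^*M}\sigma_C(y,\tilde\eta)\sum_{(x,\xi)\in\Phi^t(y,\tilde\eta)}w^{\mathrm d}_{(x,\xi)}(t;y,\tilde\eta)\sigma_A(x,\xi)\,\sigma_B(y,\tilde\eta)\,\dr\omega = \int_{S^*M}\sigma_C\,\Xi^{\mathrm d}_t(\sigma_A)\,\sigma_B\,\dr\omega$, as claimed.

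**The main obstacle.** The delicate step is the bookkeeping of the Maslov/index factors: verifying that $i^{\Theta_{\Phi^t_\kappa}}$ times the symbol $\tau_\kappa$ of $U_\kappa(t)$ assembles — across the segments joined at $Y$ and $\partial X$ — into precisely $i^{\Theta_\kappa(t;y,\tilde\eta)}\tau_\kappa(t;y,\tilde\eta)$ with the branch conventions fixed in Subsection~\ref{s:thetas}, and that the $i^{\Theta}$ factors from the two trajectories $\kappa,\kappa'$ in Corollary~\ref{c:local-fios} combine correctly on the coincidence set (using that the branches of $\arg\det^2\varphi_{x\eta}$ were matched at reflection/refraction points, Theorem~\ref{t:parametrix}(4), and that $\Theta_\kappa$ is additive along the trajectory). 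One must also be careful that the fixed-point set of the composed transformation decomposes, up to a set of measure zero (Assumption~\ref{a:bad-points}), into the pieces indexed by the common endpoints, with no spurious lower-dimensional contributions, and that the clean-intersection hypothesis implicit in Theorem~\ref{t:local-fio} holds here — this is where the absence of grazing, dead-end and singular trajectories is used. The remaining manipulations (finiteness of the sums, the $\Psi^{-\infty}$ errors contributing $0$ to $\Lambda$ by Remark~\ref{Calkin}, homogeneity) are routine.
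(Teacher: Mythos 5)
Your proposal follows essentially the same route as the paper's proof: decompose $U(t)B$ and $U(t)C^*$ via Theorem~\ref{t:parametrix}, write the product as a double sum of compositions $(\text{FIO})^*(\text{FIO})$, compute principal symbols via Lemma~\ref{l:split} and Corollary~\ref{c:composition}, identify the fixed-point set of $(\Phi^t_{\kappa'})^{-1}\circ\Phi^t_\kappa$ with the coincidence of the two trajectories, and apply Corollary~\ref{c:local-fios} so that the sum over $\kappa,\kappa'$ assembles into $w^{\mathrm d}_{(x,\xi)}$. The only differences are cosmetic: you invoke Remark~\ref{r:negative-time} before pivoting to the adjoint $(U_{\kappa'}(t)C^*)^*$ (the paper goes straight to the adjoint), you suppress the paper's explicit initial reduction to $A\in\AC_0$, and in one line you wrote $\sigma_{C^*}$ evaluated at $(\Phi^t_{\kappa'})^{-1}(y,\tilde\eta)$ rather than at the base point $(y,\tilde\eta)$, though you correct this in the next sentence.
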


\begin{proof}
If $A$ is the multiplication by a constant then the theorem follows from Corollary \ref{c:local-pdo}. Thus we can assume without loss of generality that $A\in\AC_0$.

Then, by Theorem \ref{t:parametrix}, $U(t)B$ and $U(t)C^*$ can be represented as finite sums of FIOs $\sum_{\kappa,j}U_{\kappa,j}(t)B$ and $\sum_{\kappa',j'}U_{\kappa',j'}(t)C^*$. It follows that
\begin{equation}\label{sum}
C U^*(t) A U(t) B\ =\ \sum_{\kappa,\kappa',j,j'}CU^*_{\kappa',j'}(t)AU_{\kappa,j}(t)B\,,
\end{equation}
Corollaries \ref{c:adjoint} and \ref{c:composition} imply that $(CU^*_{\kappa',j'})^*=U_{\kappa',j'}C^*$ and $AU_{\kappa,j}(t)B$ are FIOs associated with canonical transformation $\Phi^t_{\kappa',j'}$ and $\Phi^t_{\kappa,j}$ with principal symbols 
$$
\sigma_{U_{\kappa',j'}(t)}(y,\eta)\,\overline{\sigma_C(y,\eta)}\quad\text{and}\quad
\sigma_A(\Phi^t_{\kappa,j}(y,\eta))\,\sigma_{U_{\kappa,j}(t)}(y,\eta)\,\sigma_B(y,\eta),
$$
respectively. Clearly, the set of fixed points of the mapping $(\Phi^t_{\kappa',j'})^{-1}\Phi^t_{\kappa,j}$ consist of $(y,\eta)\in T^*M$ such that
$$
(x^t_{\kappa,j}(y,\eta),\xi^t_{\kappa,j}(y,\eta))=(x^t_{\kappa',j'}(y,\eta),\xi^t_{\kappa',j'}(y,\eta)).
$$
Now the required result is obtained by calculating the principal symbols of $U_{\kappa,j}(t)$ and $U_{\kappa',j'}(t)$ with the use of Lemma \ref{l:split},  applying Corollary \ref{c:local-fios} and summing up over all segments of the billiard trajectories.
\end{proof}

\begin{cor}\label{c:sums}
If $(y,\eta)\in\OC_T$ and $0\leq t\leq T$ then 
$$
\sum_{(x,\xi)\in\Phi^t(y,\eta)}w^{\mathrm c}_{(x,\xi)}(t,y,\eta)=1
\quad\text{and}\quad 
\sum_{(x,\xi)\in\Phi^t(y,\eta)}w^{\mathrm d}_{(x,\xi)}(t,y,\eta)=1\,.
$$
\end{cor}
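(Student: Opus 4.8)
The plan is to deduce both identities from Theorem \ref{averagingtheorem} by making a judicious choice of the pseudodifferential operators $A$, $B$, $C$ and then using the local Weyl law for pseudodifferential operators (Corollary \ref{c:local-pdo}) to evaluate the left-hand side independently. First I would take $A=I$, so that $CU^*(t)AU(t)B = CU^*(t)U(t)B = CB$ since $U(t)$ is unitary. By Corollary \ref{c:local-pdo}, $\Lambda(CB) = \int_{S^*M}\sigma_C\,\sigma_B\,\dr\omega$. On the other hand, since $\sigma_A\equiv 1$, the right-hand side of Theorem \ref{averagingtheorem} is $\int_{S^*M}\sigma_C(y,\tilde\eta)\,\bigl(\Xi^{\mathrm d}_t 1\bigr)(y,\tilde\eta)\,\sigma_B(y,\tilde\eta)\,\dr\omega(y,\tilde\eta)$, where $\bigl(\Xi^{\mathrm d}_t 1\bigr)(y,\tilde\eta) = \sum_{(x,\xi)\in\Phi^t(y,\tilde\eta)} w^{\mathrm d}_{(x,\xi)}(t;y,\tilde\eta)$. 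Comparing the two expressions gives
\begin{equation*}
\int_{S^*M}\sigma_C(y,\tilde\eta)\,\Bigl(\sum_{(x,\xi)\in\Phi^t(y,\tilde\eta)} w^{\mathrm d}_{(x,\xi)}(t;y,\tilde\eta) - 1\Bigr)\sigma_B(y,\tilde\eta)\,\dr\omega(y,\tilde\eta) = 0
\end{equation*}
for all admissible $B$, $C$. Since $\sigma_B\sigma_C$ ranges over a set of functions dense enough to separate $L^1$ (one can localize both symbols in arbitrarily small conic neighbourhoods of any chosen point of $\OC_T$, staying within the required conic support condition), the bracketed function must vanish $\dr\omega$-almost everywhere, which by Assumption \ref{a:bad-points} and the continuity of the billiard flow on $\OC_T$ yields $\sum_{(x,\xi)\in\Phi^t(y,\eta)} w^{\mathrm d}_{(x,\xi)}(t,y,\eta)=1$ for all $(y,\eta)\in\OC_T$ and $0\le t\le T$.

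For the identity with $w^{\mathrm c}$, I would observe that $w^{\mathrm c}$ is obtained from $w^{\mathrm d}$ by dropping the off-diagonal interference terms coming from recombining trajectories. More directly, $\sum_{(x,\xi)\in\Phi^t(y,\eta)} w^{\mathrm c}_{(x,\xi)}(t;y,\eta) = \sum_\kappa |\tau_\kappa(t;y,\eta)|^2$, where the outer sum is regrouped over all distinct billiard trajectories of length $t$ originating from $(y,\eta)$; and this is precisely the left-hand side of \eqref{tau2}. Hence the $w^{\mathrm c}$ identity is just a restatement of \eqref{tau2}, which in turn follows inductively from Remark \ref{r:energy} (the relations $(\tau_{\kappa_0,j+1})^2+(\tau_{\kappa_1,j+1})^2=1$ in the splitting case and $|\tau_{\kappa_0,j+1}|=1$ in the total-reflection and boundary cases): at each reflection/refraction event the weight of the incoming trajectory is exactly partitioned among its successors, so the total weight is conserved along the finitely many splitting events occurring in $[0,t]$.

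The main obstacle is the density/localization argument needed to pass from the integrated identity (valid for all admissible $B,C$) to the pointwise identity: one must check that products $\sigma_B\sigma_C$ of principal symbols of zero-order $\PsDO$s in $\AC_0$ with conic supports in $\OC_T$ are rich enough to force the integrand to vanish a.e., and that the exceptional null set can then be removed using the almost-everywhere well-definedness of the dynamics (Assumption \ref{a:bad-points}) together with the fact that on $\OC_{m,T}$ the trajectories, their endpoints, the coefficients $\tau_\kappa$ and the index functions $\Theta_\kappa$ all depend continuously (indeed smoothly) on $(y,\eta)$, so that $\sum w^{\mathrm d}_{(x,\xi)}$ is continuous there. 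Once that is in place the result is immediate; alternatively, for the $w^{\mathrm c}$ statement one can bypass Theorem \ref{averagingtheorem} entirely and argue directly from \eqref{tau2} and Remark \ref{r:energy} as indicated above.
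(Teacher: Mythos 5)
Your strategy coincides with the paper's own proof: the $w^{\mathrm c}$ identity is obtained by regrouping \eqref{tau2} over distinct billiard branches (using Remark \ref{r:energy}, which is exactly how \eqref{tau2} is justified), and the $w^{\mathrm d}$ identity comes from putting $A=I$ in Theorem \ref{averagingtheorem} and comparing the result against Corollary \ref{c:local-pdo}. Your elaboration of the localization step --- letting $\sigma_B\sigma_C$ range over a dense enough family supported in $\OC_T\cap S^*M$ to force the integrand to vanish almost everywhere --- fills in what the paper leaves implicit and is correct.

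The one point where your argument overreaches is the final continuity claim used to promote the almost-everywhere identity to a pointwise one on all of $\OC_T$. The function $\Xi^{\mathrm d}_t\mathbf 1(y,\tilde\eta)=\sum_{(x,\xi)\in\Phi^t(y,\tilde\eta)}w^{\mathrm d}_{(x,\xi)}(t;y,\tilde\eta)$ is \emph{not} generally continuous on $\OC_{m,T}$: expanding the square in the definition of $w^{\mathrm d}$ gives $\Xi^{\mathrm d}_t\mathbf 1=\sum_\kappa|\tau_\kappa|^2+\sum_{\kappa\ne\kappa'}i^{\Theta_\kappa-\Theta_{\kappa'}}\tau_\kappa\overline{\tau_{\kappa'}}$, where the second sum is taken only over recombining pairs with $\Phi^t_\kappa(y,\tilde\eta)=\Phi^t_{\kappa'}(y,\tilde\eta)$. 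The individual $\tau_\kappa$, $\Theta_\kappa$ and the endpoints depend smoothly on $(y,\tilde\eta)$, but the \emph{set of recombining pairs} can change discontinuously, so the cross terms switch on and off abruptly and $\Xi^{\mathrm d}_t\mathbf 1$ can jump (cf.\ Example \ref{e:hemispheres}, where recombination occurs on a set of positive measure). What the density argument actually delivers is $\Xi^{\mathrm d}_t\mathbf 1=1$ almost everywhere on $\OC_T\cap S^*M$, which is all that is used later (e.g.\ for the $L^p$ bounds on $\Xi^{\mathrm d}_t$ and in the proof of Theorem \ref{theo:main}). The paper's terse proof does not address the pointwise/a.e.\ distinction either, so this is more a gap in the statement than in your reasoning; but the continuity assertion in your ``obstacle'' paragraph should be removed.
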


\begin{proof}
The first equality is an immediate consequence of \eqref{tau2}. The second is proved by applying Theorem \ref{averagingtheorem} to $A=I$ and comparing the obtained result with Corollary \ref{c:local-pdo}.
\end{proof}

\subsection{Properties of the transfer operators}
Clearly, 
\begin{itemize}
\item
$\Xi^{\mathrm c}_t$ and $\Xi^{\mathrm d}_t$ are positivity preserving operators. 
\end{itemize}
Corollary \ref{c:sums} implies that 
\begin{itemize}
\item 
the operators $\Xi^{\mathrm c}_t$ and $\Xi^{\mathrm d}_t$ are continuous in all spaces $L^p(S^*M,\dr\omega)$ with $p\in[1,\infty]$ and their operator norms in these spaces are bounded by 1,
\item 
$\Xi^{\mathrm c}_t$ and $\Xi^{\mathrm d}_t$ are isometries in the space $L^1(S^*M,\dr\omega)$.
\end{itemize}
Note that the operators $\Xi^{\mathrm c}_t$ form a semigroup, whereas $\Xi_t^{\mathrm d}\,\Xi_s^{\mathrm d}$ may not coincide with $\Xi_{t+s}^{\mathrm d}$.

\begin{definition}
Let $\Xi_t$ be either $\Xi_t^{\mathrm c}$ or $\Xi_t^{\mathrm d}$. 
We say that $\Xi_t$ is ergodic if for all $f\in L^\infty(S^*M,\dr\omega)$
\begin{equation}\label{ergodicity1}
2T^{-2} \int_0^T \int_0^t \left( \Xi_s f \right)(y,\tilde\eta)\,\dr s\, \dr t\ \to\ \int_{S^*M} f(y,\tilde\eta)\,\dr\omega(y,\tilde\eta)
\end{equation}
as $T\to+\infty$ almost everywhere in $S^*M$.
\end{definition}

In view of Corollary \ref{c:sums} and Lebesgue's dominated convergence theorem, if $\Xi_t$ is ergodic then
\begin{equation}\label{ergodicity2}
2T^{-2} \int_0^T \int_0^t \left( \Xi_s f \right)\dr s\, \dr t\ \to\ \left( \int_{S^*M} f(y,\tilde\eta)\,\dr\omega(y,\tilde\eta) \right)\mathbf{1}
\end{equation}
in all spaces $L^p(S^*M,\dr\omega)$, where $\mathbf{1}$ is the function identically equal to one.

\begin{rem}\label{r:ergodicity1}
Changing the order of integration, one can rewrite the condition \eqref{ergodicity1} in the following equivalent form
$$
2T^{-2} \int_0^T(T-s) \left( \Xi_s f \right)(y,\tilde\eta)\,\dr s\ \to\ \int_{S^*M} f(y,\tilde\eta)\,\dr\omega(y,\tilde\eta)\,.
$$
\end{rem}

\begin{rem}\label{r:ergodicity2}
The traditional definition of ergodicity assumes that
\begin{equation}\label{ergodicity3}
t^{-1} \int_0^t \left( \Xi_s f \right)\dr s\,\ \to\  \int_{S^*M} f(y,\tilde\eta)\,\dr\omega(y,\tilde\eta)
\end{equation}
as $t\to+\infty$ almost everywhere. It is easy to see that \eqref{ergodicity3} implies \eqref{ergodicity1} but, generally speaking, the converse is not true.
However, if the left hand side of \eqref{ergodicity3} does converge to a limit for all $f$ (as in the von Neumann ergodic theorem) then, by \eqref{ergodicity1}, the limit coincides with $\int_{S^*M} f(y,\tilde\eta)\,\dr\omega(y,\tilde\eta)$ and, consequently, the dynamics is ergodic in the classical sense. In our scenario this happens when $Y=\varnothing$ and there are no branching trajectories.
\end{rem}

\section{Classical ergodicity implies quantum ergodicity} \label{s:ergodicity}

The purpose of this section is to prove the following theorem which is the main result of this paper.

\begin{theorem}\label{theo:main}
Suppose  that Assumption {\rm\ref{a:bad-points}} is fulfilled and that the diagonal dynamics $\Xi_t^{\mathrm d}$ is ergodic. 
Then quantum ergodicity (QE) holds, that is, for any $\PsDO$ $A \in \AC$ of order zero we have
 $$
  \lim_{N\to \infty} \frac{1}{N} \sum_{j=1}^N \left| \langle A \phi_j , \phi_j \rangle
  - \int_{S^*M} \sigma_A(y,\tilde\eta)\,\dr\omega(y,\tilde\eta)\right|\ =\ 0\,.
 $$
\end{theorem}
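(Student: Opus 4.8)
The plan is to adapt the classical Shnirelman--Zelditch--Colin de Verdi\`ere scheme, using Theorem~\ref{averagingtheorem} as the substitute for Egorov's theorem. First I would reduce to the case $A \in \AC_0$ with $\int_{S^*M}\sigma_A\,\dr\omega = 0$: subtracting a constant handles the mean, and the contribution of a smoothing/lower-order piece is negligible by Remark~\ref{Calkin}, while a partition of unity subordinate to the sets $\OC_{m,T}$ lets us assume $\conesupp A$ is contained in a single connected component of some $\OC_{m,T}$ (away from $\partial M$). Also, by a standard density argument it suffices to prove that the averaged quantity tends to $0$ for a dense set of such $A$ in the relevant norm, and then invoke the uniform bound $|\langle A\phi_j,\phi_j\rangle| \le \|A\|$.

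Next, following the usual trick, I would fix $T>0$, pick $B,C \in \AC_0$ equal to the identity microlocally on $\conesupp A$ with $\conesupp B \cup \conesupp C \subset \OC_T$, and form the time-average operator. The key object is
\begin{equation*}
 A_T \ :=\ \frac{2}{T^2}\int_0^T\!\!\int_0^t C\,U^*(s)\,A\,U(s)\,B\;\dr s\,\dr t,
\end{equation*}
so that $\langle A_T\phi_j,\phi_j\rangle$ differs from $\langle A\phi_j,\phi_j\rangle$ by a term controlled by $\|(I-B)\phi_j\|$ and $\|(I-C)\phi_j\|$, which can be made small (this is where microlocalization of eigenfunctions away from the boundary and the choice of $B,C$ enters; one argues as in \cite{ZZ}). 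Then I would consider $|A_T|^2$ in the sense of the operator $A_T^* A_T$, or rather work with $A_T$ directly together with its adjoint: the standard step is to note that
\begin{equation*}
 \frac{1}{N}\sum_{j=1}^N \bigl|\langle A_T\phi_j,\phi_j\rangle\bigr|^2 \ \le\ \frac{1}{N}\sum_{j=1}^N \langle A_T^* A_T\,\phi_j,\phi_j\rangle \ =\ \Lambda(A_T^* A_T) + o(1),
\end{equation*}
and $A_T^* A_T$ is (modulo lower order) a finite combination of operators of the form $C U^*(s') A^* U(s') \cdots$ to which Theorem~\ref{averagingtheorem} applies after expanding; in fact it is cleaner to apply Corollary~\ref{c:local-fios}/Theorem~\ref{averagingtheorem} to compute $\Lambda(A_T^*A_T)$ directly as a double $s$-integral of $\int_{S^*M}\overline{\Xi^{\mathrm d}_s(\sigma_A)}\,\Xi^{\mathrm d}_{s'}(\sigma_A)\,\dr\omega$-type quantities.

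The decisive step is then to let $T \to \infty$: by the ergodicity hypothesis on $\Xi^{\mathrm d}_t$ and \eqref{ergodicity2}, the time-averaged symbol $\tfrac{2}{T^2}\int_0^T\!\int_0^t \Xi^{\mathrm d}_s(\sigma_A)\,\dr s\,\dr t$ converges in every $L^p(S^*M,\dr\omega)$ to $\bigl(\int_{S^*M}\sigma_A\,\dr\omega\bigr)\mathbf 1 = 0$, since we normalized $\int\sigma_A\,\dr\omega=0$. Feeding this into the formula for $\Lambda(A_T^*A_T)$ (using Cauchy--Schwarz and the uniform $L^\infty \to L^2$ boundedness of $\Xi^{\mathrm d}_s$ from Corollary~\ref{c:sums} to pass the limit through the $s$-integral and the $\sigma_B,\sigma_C$ factors) shows $\Lambda(A_T^*A_T) \to 0$ as $T\to\infty$. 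Hence $\limsup_N \tfrac1N\sum_{j\le N}|\langle A\phi_j,\phi_j\rangle|^2$ is bounded by a quantity that is $O(\varepsilon_B + \varepsilon_C) + \Lambda(A_T^*A_T)$ for every $T$, hence is $0$; the $\ell^1$ average in the statement follows from the $\ell^2$ average by Cauchy--Schwarz. I expect the main obstacle to be the bookkeeping in the non-Egorov step: justifying that $A_T^*A_T$ is, up to compact errors, a sum of FIO-compositions handled by Theorem~\ref{averagingtheorem}, and that the resulting combinatorial sum over pairs of billiard branches $(\kappa,\kappa')$ reassembles precisely into the $\Xi^{\mathrm d}$ expression with the correct $i^{\Theta}$ phases — this is exactly the point where recombining trajectories and the index functions of Subsection~\ref{s:thetas} must be tracked, and where the "diagonal" (rather than "classical") transfer operator is forced upon us.
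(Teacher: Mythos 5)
Your outline has the right ingredients — Cauchy--Schwarz reduction, a time-average of $U^*(s)AU(s)$, insertion of cutoffs microlocalizing to $\OC_T$ with error controlled by Corollary~\ref{c:local-pdo}, and passing the ergodicity hypothesis~\eqref{ergodicity2} through the local Weyl asymptotics — and the high-level strategy coincides with the paper's. But there is a genuine gap at the step you yourself flag as "the main obstacle." You define $A_T=\tfrac{2}{T^2}\int_0^T\!\int_0^t CU^*(s)AU(s)B\,\dr s\,\dr t$ with $B,C$ already inside the time integral and then want to evaluate $\Lambda(A_T^*A_T)$ by "applying Theorem~\ref{averagingtheorem} after expanding." Expanding $A_T^*A_T$ produces terms of the form $B^*U^*(s')A^*U(s')C^*CU^*(s)AU(s)B$, i.e.\ \emph{two} different propagation times with pseudodifferential cutoffs sandwiched between them. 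Theorem~\ref{averagingtheorem} and Corollary~\ref{c:local-fios} give $\Lambda$ only for a single time-conjugation $CU^*(t)AU(t)B$, not for such two-time compositions; and, since Egorov's theorem fails here, you cannot pull the middle $C^*C$ through a propagator to reduce to a single time. So the claim that $\Lambda(A_T^*A_T)$ is a double $s$-integral of $\int_{S^*M}\overline{\Xi^{\mathrm d}_s(\sigma_A)}\,\Xi^{\mathrm d}_{s'}(\sigma_A)\,\dr\omega$-type quantities is not supported by the tools established.

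The paper's route avoids this precisely by changing the order of operations. It first time-averages \emph{without} cutoffs, $\tilde A_T:=T^{-1}\int_0^T U(-t)AU(t)\,\dr t$, and then uses the eigenvalue relation $U(t)\phi_j=e^{-it\lambda_j^{1/2}}\phi_j$ to show that
$$
\langle\tilde A_T^*\tilde A_T\phi_j,\phi_j\rangle\ =\ \|\tilde A_T\phi_j\|^2\ =\ 2T^{-2}\int_0^T\!\!\int_0^t\langle A^*U(-s)AU(s)\phi_j,\phi_j\rangle\,\dr s\,\dr t\,,
$$
which is a \emph{single}-time expression (the two propagators have collapsed because $\phi_j$ is an eigenfunction). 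Only after this reduction are the cutoffs $\Op(\chi)$ and $B$ inserted, yielding $\Lambda\bigl(\Op(\chi)A^*U(-s)AU(s)B\bigr)$, which is exactly of the form covered by Theorem~\ref{averagingtheorem}. If you reorder your argument in this way — time-average first, exploit the eigenfunction identity to reduce the squared average to a single $s$-integral, and only then microlocalize with $\chi$ and $B$ — your proof goes through and matches the paper's.
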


\begin{proof}
The proof proceeds in several steps.

\subsubsection*{Step 1}
We can assume without loss of generality that 
\begin{equation}\label{1-1}
\int_{S^*M} \sigma_A(x,\xi)\,\dr\omega(y,\tilde\eta)\ =\ 0
\end{equation}
simply by subtracting the constant $\int_{S^*M} \sigma_A(y,\tilde\eta)\,\dr\omega(y,\tilde\eta)$ from $A$. Thus it is sufficient to prove that, under the assumption \eqref{1-1}, 
$$
  \limsup_{N\to \infty} \frac{1}{N} \sum_{j=1}^N \left| \langle A \phi_j , \phi_j \rangle\right|\ =\ 0\,.
$$

\subsubsection*{Step 2} If  $Q$ is an $L^2$-bouned operator, let us denote 
$$
\Lambda_N(Q)\ :=\ \frac{1}{N} \sum_{j=1}^N\langle Q\phi_j,\phi_j\rangle\,,
$$
so that $\Lambda(Q)=\lim_{N\to\infty}\Lambda_N(Q)$ whenever the limit exists.
By the Cauchy--Schwarz inequality,
\begin{equation}\label{2-1}
\frac{1}{N} \sum_{j=1}^N|\langle Q\phi_j,\phi_j\rangle|\ \leq\ \frac{1}{N} \sum_{j=1}^N\|Q\phi_j\|\ \leq\ \left(\Lambda_N(Q^*Q)\right)^{1/2}\,.
\end{equation}

Let us define $A_t:=U(-t)AU(t)$ and 
$\tilde A_T:=T^{-1}\,\int_0^TU(-t)AU(t)\dr t\,$. 
Here and further on integrals of operator-valued functions are understood in the weak sense.

Since $U(t)\phi_j=e^{-it\lambda_j^{1/2}}\phi_j$  and $U(-t)=U^*(t)$ for all $t\in\rbb$, we have 
$$
\langle\tilde A_T\phi_j ,\phi_j \rangle\ =\ \langle A_t\phi_j ,\phi_j \rangle\ =\ \langle A\phi_j , \phi_j \rangle
$$
for all positive integers $j$, $t\in\rbb$ and $T>0$. Therefore, by \eqref{2-1},
$$
\limsup_{N\to\infty}\frac{1}{N} \sum_{j=1}^N \left| \langle A \phi_j , \phi_j \rangle\right|
\ \leq\ \left(\limsup_{N\to\infty}\Lambda_N(\tilde A^*_T\tilde A_T)\right)^{1/2}.
$$
Thus it is sufficient to show that 
\begin{equation}\label{2-2}
\limsup_{N\to\infty}\Lambda_N(\tilde A^*_T\tilde A_T)\ =\ 0.
\end{equation}

\begin{rem}
Note that, generally speaking, the operators 
$A_t$ and $\tilde A_T$ are not FIOs and do not belong to $\AC_0$. Therefore we cannot directly apply Theorem {\rm\ref{t:local-fio}} or Corollary {\rm\ref{c:local-fios}} to evaluate the upper limit \eqref{2-2}.
\end{rem}

\subsubsection*{Step 3} 
Clearly, 
\begin{multline}\label{3-1}
\|\tilde A_T\phi_j\|^2\ =\ T^{-2}\int_0^T\int_0^T\langle U(-t)AU(t)\phi_j ,U(-r)AU(r)\phi_j \rangle\,\dr r\,\dr t\\
=\ T^{-2}\int_0^T\int_0^TF_{A,j}(t-r)\,\dr r\,\dr t\,,
\end{multline}
where 
$$
F_{A,j}(s):=e^{-is\,\lambda_j^{1/2}}
\langle U(-s)A\phi_j ,A\phi_j \rangle=\langle A^*U(-s)AU(s)\phi_j ,\phi_j \rangle\,.
$$
Since $F_{A,j}(-s)=\overline{F_{A,j}(s)}$ and $\|\tilde A_T\phi_j\|^2$ is real, the integral on the right hand side of \eqref{3-1} coincides with
$$
2\,T^{-2}\int_0^T\int_0^tF_{A,j}(t-r)\,\dr r\,\dr t= 2\,T^{-2}\int_0^T\int_0^tF_{A,j}(s)\,\dr s\,\dr t\,.
$$
Thus it follows that 
\begin{equation}\label{3-2}
\langle\tilde A_T^*\tilde A_T\phi_j,\phi_j\rangle\ =\ \|\tilde A_T\phi_j\|^2\ =\ 2\,T^{-2}\int_0^T\int_0^t\langle A^*A_s\phi_j ,\phi_j \rangle\,\dr s\,\dr t\,.
\end{equation}

\subsubsection*{Step 4}

Let $\Op(\chi)$ be the operator of multiplication by a real-valued function $\chi\in C_0^\infty(M\setminus\partial M)$ such that $0\leq\chi\leq1$, and let $B\in\AC_0$ be a $\PsDO$  of order zero.
Since $\|A^*A_s\|\leq\|A\|^2$ and $\|\Op(\chi)\|\leq1$, we have
\begin{multline*}
\left|\langle A^*A_s\phi_j ,\phi_j \rangle\ -\ \langle\Op(\chi)A^*A_s\phi_j ,\phi_j \rangle\right|\ =\ \left|\langle\Op(1-\chi)A^*A_s\phi_j ,\phi_j \rangle\right|\\
=\ \left|\langle A^*A_s\phi_j ,\Op(1-\chi)\phi_j \rangle\right|\ \leq\ \|A\|^2\,\|\Op(1-\chi)\phi_j \|_{L^2(M)}
\end{multline*}
where $\Op(1-\chi)$ is the multiplication by $1-\chi$, and 
\begin{multline*}
\left|\langle\Op(\chi)A^*A_s\phi_j ,\phi_j \rangle\
-\ \langle\Op(\chi)A^*A_sB\phi_j ,\phi_j \rangle\right|\\ 
=\ \left|\langle\Op(\chi)A^*A_s(I-B)\phi_j ,\phi_j \rangle\right|\ \leq\ \|A\|^2\,\|(I-B)\phi_j\|_{L^2(M)}\,.
\end{multline*}
These inequalities and \eqref{3-2} imply the estimate
\begin{multline*}
\left|\langle\tilde A^*_T\tilde A_T\phi_j,\phi_j\rangle\ -\ 2\,T^{-2}\int_0^T\int_0^t\langle \Op(\chi)A^*A_sB\phi_j ,\phi_j \rangle\,\dr s\,\dr t\right|\\
\leq \|A\|^2\,\|\Op(1-\chi)\phi_j \|_{L^2(M)}+\|A\|^2\,\|(I-B)\phi_j\|_{L^2(M)}\,.
\end{multline*}
Now, applying the second inequality \eqref{2-1}, we see that
\begin{multline}\label{4-2}
\left|\Lambda_N(\tilde A^*_T\tilde A_T)\ -\ 2\,T^{-2}\int_0^T\int_0^t \Lambda_N\bigl(\Op(\chi)A^*A_sB\bigr)\,\dr s\,\dr t\right|\\
\leq\ \|A\|^2\,\left(\Lambda_N\bigl(\Op(1-\chi)^2\bigr)\right)^{1/2}+\|A\|^2\,\left(\Lambda_N\bigl((I-B)^*(I-B)\bigr)\right)^{1/2}
\end{multline}
for all $N=1,2,\ldots$

\subsubsection*{Step 5}
Let us fix an arbitrary $\eps>0$. In view of \eqref{1-1}, ergodicity of the diagonal dynamics implies that there exists $T>0$ such that 
\begin{equation}\label{5-1}
\left\|2T^{-2}\int_0^T\int_0^t\Xi^{\mathrm d}_s\sigma_A\;\dr s\,\dr t\right\|_{L^1(S^*M,\dr\omega)}\ <\ \eps\,.
\end{equation}
Let us fix such a positive $T$ and choose the nonnegative function $\chi\in C_0^\infty(M\setminus\partial M)$ and the $\PsDO$ $B\in\AC_0$ of order zero such that
\begin{enumerate}
\item[(a)]
$\|(1-\chi)\|^2_{L^2(S^*M,\dr\omega)}<\eps^2$\,,
\item[(b)]
$\conesupp B\subset\OC_{T}$,
$|\sigma_B|\leq1$ and  $\|1-\sigma_B\|^2_{L^2(S^*M,\dr\omega)}<\eps^2$.
\end{enumerate}
Note that (b) can be satisfied because, by Assumption \ref{a:bad-points}, $ \OC_{T}$ has full measure.

In view of Corollary \ref{c:local-pdo}, the limits 
$\Lambda\bigl((I-\Op(\chi))^2\bigr)$ and $\Lambda\bigl((I-B)^*(I-B)\bigr)$ exist and are smaller than $\eps^2$. Therefore 
\begin{enumerate}
\item[$(*)$]
the right hand side of \eqref{4-2} is estimated by $2\eps\|A\|^2$ for all sufficiently large $N$.
\end{enumerate} 

By Theorem \ref{averagingtheorem}, the limit $\Lambda\bigl(\Op(\chi)A^*A_sB\bigr)$ also exists and is equal to 
\begin{equation}\label{5-2}
\int_{S^*M}\chi(y)\,\overline{\sigma_A(y,\tilde\eta)}\,\,\Xi^{\mathrm d}_s(\sigma_A)(y,\tilde\eta)\,\sigma_B(y,\tilde\eta)\, d\omega(y,\tilde\eta)\,.
\end{equation}
The Lebesgue dominated convergence theorem implies that
\begin{multline}\label{5-3}
\lim_{N\to\infty}2\,T^{-2}\int_0^T\int_0^t \Lambda_N\bigl(\Op(\chi)A^*A_sB\bigr)\,\dr s\,\dr t\\
=\ 2\,T^{-2}\int_0^T\int_0^t\Lambda\bigl(\Op(\chi)A^*A_sB\bigr)\,\dr s\,\dr t\,.
\end{multline}
Substituting \eqref{5-2}, integrating over $s$ and $t$, and taking into account \eqref{5-1}  and (a), (b), we see that the absolute value of \eqref{5-3} is smaller than $\,\eps\sup|\sigma_A|$.
Consequently, 
\begin{enumerate}
\item[$(**)$]
the integral in the left hand side of \eqref{4-2} is estimated by\\
$\,\eps\sup|\sigma_A|\,$ for all sufficiently large $N$.
\end{enumerate}

Applying the estimates $(*)$ and $(**)$ to \eqref{4-2}, we obtain 
$$
\limsup_{N\to\infty}\Lambda_N\bigl(\tilde A^*_T\tilde A_T\bigr)\ \leq\ 2\eps\,\|A\|^2+\eps\sup|\sigma_A|\,.
$$
Since $\eps$ can be chosen arbitrarily small, this implies \eqref{2-2}.
\end{proof}

\appendix

\section{Proof of Theorem \ref{t:composition}}\label{s:a}

In this and next sections, if $f=(f_1,\ldots,f_n)$ is a vector-function of $n$-dimensional variable $\theta=(\theta_1,\ldots,\theta_n)$, we  denote by $f_\theta$ the $n\times n$-matrix function with entries $(f_i)_{\theta_j}$, where $j$ enumerates elements of the $i$th row.

Let
\begin{equation}\label{Phi-12}
\begin{split}
\Phi_1:(y,\eta) & \mapsto\left(z^{(1)}(y,\eta),\zeta^{(1)}(y,\eta)\right),\\
\Phi_2:(x,\xi) &\mapsto\left(z^{(2)}(x,\xi),\zeta^{(2)}(x,\xi)\right),
\end{split}
\end{equation}
and 
\begin{equation}\label{Phi}
\Phi:=\Phi_2^{-1}\circ\Phi_1:(y,\eta)\mapsto(x^*(y,\eta),\xi^*(y,\eta))\,.
\end{equation}
Note that  
\begin{align}
(\zeta^{(1)}_\eta)^Tz^{(1)}_\eta-(z^{(1)}_\eta)^T\zeta^{(1)}_\eta\ &=\ (\zeta^{(2)}_\xi)^Tz^{(2)}_\xi-(z^{(2)}_\xi)^T\zeta^{(2)}_\xi\ =\ 0\,,\label{preserve-2a}
\\
(\zeta^{(1)}_\eta)^Tz^{(1)}_y-(z^{(1)}_\eta)^T\zeta^{(1)}_y\ &=\ (\zeta^{(2)}_\xi)^Tz^{(2)}_x-(z^{(2)}_\xi)^T\zeta^{(2)}_x\ =\ I\label{preserve-2b}
\end{align}
\begin{equation}\label{preserve-1a}
(z^{(1)}_\eta)^T\zeta^{(1)}\ =\ (z^{(2)}_\xi)^T\zeta^{(2)}\ =\ 0\,,
\end{equation}
\begin{equation}\label{preserve-1b}
(z^{(1)}_y)^T\zeta^{(1)}=\eta\quad\text{and}\quad (z^{(2)}_x)^T\zeta^{(2)}=\xi
\end{equation}
for all $(y,\eta)$ and $(x,\xi)$ in any local coordinates because the transformations $\Phi_j$ preserves the 2-form $\dr z\wedge\dr\zeta$ and the 1-form $\zeta\cdot\dr z$.

The proof proceeds in several steps.

\subsection{Part 1}\label{s:a1} Assume that  $\,\Phi_1(y_0,\eta_0)=\Phi_2(x_0,\xi_0):=(z_0,\zeta_0)\,$. Then there exists a local coordinate system $\,z=(z_1,\ldots,z_n)\,$ in a neighbourhood of $z_0$ such that $\det\zeta^{(1)}_\eta(y_0,\eta_0)\ne0$ and $\det\zeta^{(2)}_\xi(x_0,\xi_0)\ne0$. 

Indeed, let $\tilde z$ be arbitrary coordinates in a neighbourhood of $z_0$. One can easily show that  under a change of coordinates $\tilde z\to z$ the matrices $\,\zeta^{(1)}_\eta\,$ and $\,\zeta^{(2)}_\xi\,$ transform in the following way
\begin{equation}\label{xi-eta-1}
\begin{split}
\zeta^{(1)}_\eta\ &=\ \left.(\tilde z_z)^T\right|_{z=z^{(1)}}\tilde\zeta^{(1)}_\eta+C^{(1)}\left.(\tilde z_z)^{-1}\right|_{z=z^{(1)}}\tilde z^{(1)}_\eta\,,\\
\zeta^{(2)}_\xi\ &=\ \left.(\tilde z_z)^T\right|_{z=z^{(2)}}\tilde\zeta^{(2)}_\xi+C^{(2)}\left.(\tilde z_z)^{-1}\right|_{z=z^{(1)}}\tilde z^{(2)}_\xi\,,
\end{split}
\end{equation}
where $C^{(1)}$, $C^{(2)}$ are symmetric matrices with entries 
$$
C^{(1)}_{ik}=\sum_{m=1}^n\tilde\zeta_m^{(1)}\left.\frac{\partial^2\tilde z_m}{\partial z_i\,\partial z_k}\right|_{z=z^{(1)}},
\quad 
C^{(2)}_{ik}=\sum_{m=1}^n\tilde\zeta_m^{(2)}\left.\frac{\partial^2\tilde z_m}{\partial z_i\,\partial z_k}\right|_{z= z^{(2)}}
$$
(see, for instance, \cite[Section 2.3]{SV2}).
Clearly, one can choose coordinates $z$ in such a way that 
$$
\left.(\tilde z_z)\right|_{z=z^{(1)}(y_0,\eta_0)}=\ \left.(\tilde z_z)\right|_{z=z^{(2)}(x_0,\xi_0)}\ =\ I
$$
and $\,C^{(1)}(y_0,\eta_0)=C^{(2)}(x_0,\xi_0)\ =\ c\,I\,$, where $c$ is an arbitrary real constant. Then \eqref{xi-eta-1} turn into
\begin{equation}\label{xi-eta-2}
\begin{split}
\zeta^{(1)}_\eta(y_0,\eta_0)\ & =\ \tilde\zeta^{(1)}_\eta(y_0,\eta_0)+c\,\tilde z^{(1)}_\eta(y_0,\eta_0),\\
\zeta^{(2)}_\xi(x_0,\xi_0)\ &=\ \tilde\zeta^{(2)}_\xi(x_0,\xi_0)+c\,\tilde z^{(2)}_\xi(x_0,\xi_0)\,.
\end{split}
\end{equation}
In view of \eqref{preserve-2a}, $\tilde\zeta^{(1)}_\eta$ and $\tilde\zeta^{(2)}_\xi$ map the kernels of the matrices  $\tilde z^{(1)}_\eta$ and $\tilde z^{(2)}_\xi$ into the orthogonal complements of their ranges. Since $\tilde\zeta^{(1)}_\eta$ and $\tilde\zeta^{(2)}_\xi$ are both invertible this implies that the matrices on the right hand side of the equalities \eqref{xi-eta-2} are non-degenerate for sufficiently large $\,c\,$.

\subsection{Part 2}\label{s:a2}  

Let $\,(y_0,\eta_0)$, $(x_0,\xi_0)$ and $(z_0,\zeta_0)\,$ be as in Part 1, and let $z$ be an arbitrary local coordinate system such that $\det\zeta^{(1)}_\eta(y_0,\eta_0)\ne0$ and $\det\zeta^{(2)}_\xi(x_0,\xi_0)\ne0$. 

Assume first that $\conesupp V_1$ lies in a sufficiently small conic neighbourhood  $\mathcal O_1$ of the point $\,(y_0,\eta_0)\,$ such that $\,\det\zeta^{(1)}_\eta(y,\eta)\ne0$ for all $(y,\eta)\in\mathcal O_1\,$ and $\,\det\zeta^{(2)}_\xi(x,\xi)\ne0$ for all $(x,\xi)\in\Phi_1(\mathcal O_1)\,$. If 
\begin{equation}\label{phi}
\begin{split}
\varphi^{(1)}(z,y,\eta)\ &=\ (z-z^{(1)}(y,\eta))\cdot\zeta^{(1)}(y,\eta)\,,\\
\varphi^{(2)}(z,x,\xi)\ &=\ (z-z^{(2)}(x,\xi))\cdot\zeta^{(2)}(x,\xi)
\end{split}
\end{equation}
then, by \eqref{preserve-1a}, 
\begin{equation*}
\begin{split}
\varphi^{(1)}_\eta(z,y,\eta)\ &=\ (z-z^{(1)}(y,\eta))\cdot\zeta^{(1)}_\eta(y,\eta)\,,\\
\varphi^{(2)}_\xi(z,x,\xi)\ &=\ (z-z^{(2)}(x,\xi))\cdot\zeta^{(2)}_\xi(x,\xi)\,,
\end{split}
\end{equation*}
$\varphi^{(1)}_{z\eta}=\zeta^{(1)}_\eta\,$ and $\varphi^{(2)}_{z\xi}=\zeta^{(2)}_\xi\,$. Therefore the phase functions $\varphi^{(j)}$ satisfy the conditions {\bf(a$_3$)}--{\bf(a$_5$)} of Subsection \ref{FIO:def} for all $(y,\eta)\in\mathcal O_1\,$, $(x,\xi)\in\Phi_1(\mathcal O_1)\,$ and $z$ sufficiently closed to $z_0$.

Since ${\mathrm WF}\,(V_1u)\subset\Phi_1(\mathcal O_1)\,$ for all distributions $\,u\,$, we may assume without loss of generality that $\,\conesupp V_2\subset\Phi_1(\mathcal O_1)\,$. Then, in view of the above, the Schwartz kernels $\VC_1(z,y)$ and $\VC_2(z,x)$ of the FIO $V_j$ can be represented by oscillatory integrals 
\begin{equation}\label{composition0}
\begin{split}
(2\pi)^{-n}\int_{T^*_yM} e^{i\varphi^{(1)}(z,y,\eta)}p_1(y,\eta)\,
\left|\det\zeta^{(1)}_\eta\right|^{1/2}\,
\varsigma_1(z,y,\eta)\,\dr\eta\,,\\
(2\pi)^{-n}\int_{T^*_yM} e^{i\varphi^{(2)}(z,x,\xi)}p_2(x,\xi)\,
\left|\det\zeta^{(2)}_\xi\right|^{1/2}\,
\varsigma_2(z,x,\xi)\,\dr\xi
\end{split}
\end{equation}
of the form \eqref{integral} with the phase functions \eqref{phi}. The
Schwartz kernel of the composition $V_2^*V_1$ coincides with
\begin{multline}\label{composition1}
\iiint e^{i\psi(x,\xi,z,y,\eta)}b(x,\xi,z,y,\eta)\,p_1(y,\eta)\,
\overline{p_2(x,\xi)}\,\dr\eta\,\dr z\,\dr\xi\\
= \iiint|\eta|^{-n} e^{i\psi(x,|\eta|\xi,z,y,\eta)}b(x,|\eta|\xi,z,y,\eta)\,
p_1(y,\eta)\,\overline{p_2(x,|\eta|\xi)}\,\dr\eta\,\dr z\,\dr\xi\,,
\end{multline}
where the integrals are taken over $T^*_xM\times M\times T^*_yM$,
$$
\psi(x,\xi,z,y,\eta)\ =\ (z-z^{(1)}(y,\eta))\cdot\zeta^{(1)}(y,\eta)-(z-z^{(2)}(x,\xi))
\cdot\zeta^{(2)}(x,\xi)
$$
and
\begin{multline*}
b(x,\xi,z,y,\eta)\\
=(2\pi)^{-2n}\left|\det\zeta^{(1)}_\eta(y,\eta)\right|^{1/2}
\left|\det\zeta^{(2)}_\xi(x,\xi)\right|^{1/2}\,
\varsigma_1(z,y,\eta)\,\varsigma_2(z,x,\xi)\,.
\end{multline*}

Now we are going to apply the stationary phase method to the integral with respect to the variables $z$ and $\xi$, considering $|\eta|$ as a large parameter. A rigorous justification of the stationary phase formula for non-convergent integrals of this type can be found, for instance, in \cite[Appendix C]{SV2}. 

The equations $\psi_\xi=0$ and $\psi_z=0$ are equivalent to
\begin{equation}\label{st-phase1}
z=z^{(2)}(x,\xi)\quad\text{and}\quad\zeta^{(2)}(x,\xi)=\zeta^{(1)}(y,\eta)
\end{equation}
respectively.
Since $\zeta^{(2)}(x_0,\xi_0)=\zeta^{(1)}(y_0,\eta_0)$ and $\det\zeta^{(2)}_\xi(x_0,\xi_0)\ne0$, in a neighbourhood of $(x_0,y_0,\eta_0)$, the second equation \eqref{st-phase1} has a unique $\xi$-solution $\hat\xi(x,y,\eta)$ 
such that $\hat\xi(x_0,y_0,\eta_0)=\xi_0$. Thus the stationary point is $(z,\xi)=(z^{(2)}(x,\hat\xi),\hat\xi)$. 
It is unique and non-degenerate because
\begin{equation}\label{hess}
\begin{pmatrix}
\psi_{zz}&\psi_{z\xi}\\ \psi_{\xi z}&\psi_{\xi\xi}
\end{pmatrix}
\ =\
\begin{pmatrix}
0&\zeta^{(2)}_\xi\\ \left(\zeta^{(2)}_\xi\right)^T&\psi_{\xi\xi}
\end{pmatrix}.
\end{equation}

By the stationary phase formula, the integral \eqref{composition1} coincides modulo a smooth function with
\begin{equation}\label{composition2}
(2\pi)^{-n}\int e^{i\varphi(x,y,\eta)}\left|\det\zeta^{(2)}_\xi(x,\hat\xi) \right|^{-1}\,\tilde p(x,y,\eta)\,\dr\eta\,,
\end{equation}
where
\begin{multline}\label{composition3}
\varphi(x,y,\eta):=
\psi(x,\hat\xi,z^{(2)}(x,\hat\xi),y,\eta)\\ 
=\ \left(z^{(2)}(x,\hat\xi)-z^{(1)}(y,\eta)\right)\cdot\zeta^{(1)}(y,\eta)
\end{multline}
and $\tilde p$ is an amplitude of class $S^{m_1+m_2}_{\mathrm{phg}}$ with the leading homogeneous term
$$
(2\pi)^{2n}\,b(x,\hat\xi,z^{(2)}(x,\hat\xi),y,\eta)\;
p_1(y,\eta)\,\overline{p_2(x,\hat\xi)}
$$
such that
$$
\conesupp\tilde p\subset\{(x,y,\eta)\,:\,(y,\eta)\in\conesupp p_1\,,\,(x,\hat\xi)\in\conesupp p_2\}
$$
(we have used the fact that the signature of the Hessian \eqref{hess} is equal to zero).

Clearly, $\hat\xi(x^*(y,\eta),y,\eta)=\xi^*(y,\eta)$ and $z^{(2)}(x^*,\xi^*)=z^{(1)}(y,\eta)$. Thus $\varphi(x^*,y,\eta)=0$. Since
$$
\psi_z(x,\hat\xi,z^{(2)}(x,\hat\xi),y,\eta)
=\psi_\xi(x,\hat\xi,z^{(2)}(x,\hat\xi),y,\eta)=0
$$
for all $x,y,\eta$, we also have
$$
\varphi_x(x,y,\eta)=\psi_x(x,\hat\xi,z^{(2)}(x,\hat\xi),y,\eta)
=(z^{(2)}_x(x,\hat\xi))^T\,\zeta^{(2)}(x,\hat\xi)\,.
$$
Now the second equality \eqref{preserve-1b} implies that
$\varphi_x(x,y,\eta)=\hat\xi(x,y,\eta)$. Substituting $x=x^*$, we obtain
\begin{equation}\label{x}
\varphi_x(x^*,y,\eta)=(z^{(2)}_x(x^*,\xi^*))^T\,\zeta^{(2)}(x^*,\xi^*)=\xi^*\,.
\end{equation}

Similarly,
\begin{multline*}
\varphi_\eta(x,y,\eta)=\psi_\eta(x,\hat\xi,z^{(2)}(x,\hat\xi),y,\eta)\\
=\left.\nabla_\eta\left((z-z^{(1)}(y,\eta))\cdot
\zeta^{(1)}(y,\eta)\right)\right|_{z=z^{(2)}(x,\hat\xi)}\,.
\end{multline*}
This equality and \eqref{preserve-1a} imply that
$$
\varphi_\eta(x,y,\eta)=(z^{(2)}(x,\hat\xi)-z^{(1)}(y,\eta))
\cdot\zeta^{(1)}_\eta(y,\eta)\,,
$$
and, consequently,
$$
\varphi_{x\eta}(x,y,\eta)=\left(\nabla_x\,z^{(2)}(x,\hat\xi)\right)^T
\zeta^{(1)}_\eta(y,\eta)\,.
$$
Differentiating the identity $\zeta^{(2)}(x,\hat\xi)\equiv\zeta^{(1)}(y,\eta)$, we obtain
$$
\zeta^{(2)}_\xi(x,\hat\xi)\,\hat\xi_x\ =\ -\zeta^{(2)}_x(x,\hat\xi)\,.
$$
From the above two equalities it follows that
$$
\varphi_{x\eta}(x^*,y,\eta)\ =\
\left(z^{(2)}_x-z^{(2)}_\xi\,\left(\zeta^{(2)}_\xi\right)^{-1}
\zeta^{(2)}_x\right)^T\zeta^{(1)}_\eta\,,
$$
where $z^{(1)}_\eta=z^{(1)}_\eta(y,\eta)$, $\zeta^{(1)}_\eta=\zeta^{(1)}_\eta(y,\eta)$, and $z^{(2)}_x$, $z^{(2)}_\eta$, $\zeta^{(2)}_x$, $\zeta^{(2)}_\eta$ are evaluated at $(x^*,\xi^*)$.

In view of \eqref{preserve-2a} and \eqref{preserve-2b},
\begin{multline*}
z^{(2)}_x-z^{(2)}_\xi\,\left(\zeta^{(2)}_\xi\right)^{-1}
\zeta^{(2)}_x\\
=\left((\zeta^{(2)}_\xi)^T\right)^{-1}
\left((\zeta^{(2)}_\xi)^Tz^{(2)}_x
-(z^{(2)}_\xi)^T\zeta^{(2)}_x\right)=\left((\zeta^{(2)}_\xi)^T\right)^{-1}
\end{multline*}
Consequently,
\begin{equation}\label{x-eta1}
\varphi_{x\eta}(x^*,y,\eta)\ =\
\left(\zeta^{(2)}_\xi(x^*,\xi^*)\right)^{-1}\,
\zeta^{(1)}_\eta(y,\eta)
\end{equation}
and
\begin{equation}\label{x-eta2}
\left|\det\varphi_{x\eta}(x^*,y,\eta)\right|^{1/2}\ =\ \left|\det\zeta^{(1)}_\eta(y,\eta)\right|^{1/2}
\left|\det\zeta^{(2)}_\xi(x^*,\xi^*)\right|^{-1/2}.
\end{equation}

Since $\det\zeta^{(j)}_\eta\ne0$, the above equality and \eqref{x} imply that the phase function \eqref{composition3} satisfies the conditions of Subsection \ref{FIO:def}. This shows that \eqref{composition2} defines the Schwartz kernel of a FIO associated with the canonical transformation $\Phi$. Applying the procedure described in Remark \ref{r:fio-def3}, we can remove the dependence on $x$ and rewrite it in the form
\begin{equation}\label{composition4}
(2\pi)^{-n}\int e^{i\varphi(x,y,\eta)}\,p(y,\eta)\,
\left|\det\varphi_{x\eta}(x^*,y,\eta)\right|^{1/2}\,\varsigma(x,y,\eta)\,\dr\eta
\end{equation}
where $\varsigma$ is a cut-off function satisfying the conditions of Subsection \ref{FIO:def} and
$p(y,\eta)$ is an amplitude of class $S^{m_1+m_2}_{\mathrm{phg}}$ such that
$$
\conesupp p\subset\{(y,\eta)\in\conesupp p_1\,:\,(x^*,\xi^*)\in\conesupp p_2\}\,.
$$
Clearly, the leading homogeneous term $p_0$ of the amplitude $p$ is given by the formula
\begin{equation}\label{amplitudes}
p_0(y,\eta)\  =\  p_{1,0}(y,\eta)\,\overline{p_{2,0}\left(\Phi(y,\eta)\right)},
\end{equation}
where $p_{j,0}$ are the leading homogeneous terms of the amplitudes $p_j$.

\subsection{Part 3}\label{s:a3}

Consider now general FIOs $V_j$ associated with the canonical transformations $\Phi_j$. Splitting $V_j$ into sums of FIOs with simply connected conic supports,  we see that it is sufficient to prove the theorem assuming that the bundles $\DC_{\mathbb{Z}}(\Phi_j)$ are topologically trivial and the Schwartz kernels of $V_j$ are given by oscillatory integrals $\IC_j$ of the form \eqref{integral1} with phase functions $\varphi_j$ and full symbols $q_j$.

If the conic supports of $q_j$ are sufficiently small then, choosing suitable local coordinates and transforming $\varphi_j$ into the phase functions \eqref{phi}, we can rewrite the corresponding oscillatory integrals in the form \eqref{composition0} with $p_j=i^{k_j}q_j$, where $k_j$ is an integer determined by the choice of branch of $\arg({\det}^2(\varphi_j)_{x\eta})$. In this case, by Part 2, the composition $V_2^*V_1$ is a FIO given by the oscillatory integral \eqref{composition4} with the local phase function \eqref{composition3} and an amplitude $p\in S^{m_1+m_2}_{\mathrm{phg}}$ with the leading homogeneous term
$$
p_0(y,\eta)\  =\  i^{k_1-k_2}\,q_{1,0}(y,\eta)\,\overline{q_{2,0}\left(\Phi(y,\eta)\right)}\,.
$$

Let $\tilde\varphi$ be an arbitrary global phase function associated with the transformation $\Phi$. Since $\DC(\Phi)=\DC(\Phi_1)$ is simply connected,  the bundle $\DC_{\mathbb{Z}}(\Phi)$ is also trivial. Let us fix a continuous branch of $\arg({\det}^2(\tilde\varphi_{x\eta})$. Transforming the phase function $\varphi$ given by \eqref{composition3} into $\tilde\varphi$, we see that  \eqref{composition4} coincides with an oscillatory integral of the form
 $$
(2\pi)^{-n}\int_{T^*_yM} e^{i\tilde\varphi(x,y,\eta)}i^{-k}\,\tilde q(y,\eta)\,\left(\mathrm{det}^2\,\tilde\varphi_{x\eta}(x,y,\eta)\right)^{1/4}\,
\varsigma(x,y,\eta)\,d\eta\,,
$$
where $\tilde q\in S^{m_1+m_2}_{\mathrm{phg}}$ is another amplitude with the same leading homogeneous term $i^{k_1-k_2}\,q_{1,0}(y,\eta)\,\overline{q_{2,0}\left(\Phi(y,\eta)\right)}$ and $k$ is the integer such that $\frac{k\pi}2=\arg({\det}^2\varphi_{x\eta})$ turns into $\arg({\det}^2\tilde\varphi_{x\eta})$ under continuous transformation of the phase functions $\phi\mapsto\tilde\varphi$ (see Remark \ref{r:proof2}).

Thus we have proved that, for $V_j$ with small conic supports, the composition $V_2^*V_1$ is a FIOs of order $m_1+m_2$ with principal symbol 
$$
i^{k_1-k_2-k}\sigma_{V_1}(y,\eta)\,\sigma_{V_2}(\Phi(y,\eta))
$$ 
such that
$$
\conesupp(V_2^*V_1)\subset\left(\conesupp V_1\,\bigcap\,\Phi^{-1}(\conesupp V_2)\right).
$$
Obviously, the integer $k_1-k_2-k$ is uniquely defined by the choice of branches of $\arg({\det}^2(\varphi_j)_{x\eta})$ and $\arg({\det}^2(\tilde\varphi_{x\eta})$. Therefore, using a partition of unity on $T^*M$, we see that the same result holds for all FIOs $V_j$. Since the principal symbols are defined modulo a factor $i^m$ with an integer $m$, this completes the proof.
\qed

\section{Sketch of proof of Lemma \ref{l:split}$\mathrm{(i)}$ and $\mathrm{(ii)}$}\label{s:b}

Let the FIOs corresponding to the incoming, reflected and refracted trajectories are given by the oscillatory integrals \eqref{integral1} with phase functions $\varphi$, $\varphi^+$, $\varphi^-$ and symbols $q$, $q^+$ and $q^-$ respectively. The first two are standard oscillatory integrals defined in Section \ref{s:FIO}. In the case (i), the third is also a standard oscillatory integral. In the case (ii), it is a boundary layer oscillatory integral given by the same expression \eqref{integral1} but with a complex-valued phase function satisfying the conditions of \cite[Section 2.6.4]{SV2}. 

Since we are dealing with half-densities, elements $u$ in the domain of the operator satisfy
\begin{gather*}
 \mathbf{h}_+^{-1/2}(x) u_+(x) = \mathbf{h}_-^{-1/2}(x) u_-(x),\\
 \mathbf{h}_+(x) \partial_n^+ (\mathbf{g}^{-1/2}  u)(x) = -\mathbf{h}_-(x) \partial_n^- (\mathbf{g}^{-1/2} u)(x)
\end{gather*}
in each local coordinate chart for each $x \in Y$ in the domain of the chart, where $u_\pm$ are the left and right boundary values of $u$
and $\partial_n^\pm$ denote the inward $g_\pm$-normal derivatives. 

Substituting the sum of the integrals into the boundary conditions and equating to zero the sum of leading terms at $t=t^*$ and $x=x^*$, we obtain the following equations, 
\begin{equation}\label{b:phase}
\varphi(t^*,x^*,y,\eta)\ =\  \varphi^+(t^*,x^*,y,\eta)\ =\ \varphi^-(t^*,x^*,y,\eta)\,,
\end{equation}
\begin{multline}\label{b:symbols1}
\mathbf h_+^{-1/2}(x) \left.\left(u_{\kappa,j}\,d_\varphi+ u_{\kappa_0,j+1}\,d_{\varphi^+}\right)\right|_{t=t^*,\,x=x^*} \\ \ =\  \mathbf h_-^{-1/2}(x) \left.\left(u_{\kappa_1,j+1}\,d_{\varphi^-}\right)\right|_{t=t^*,\,x=x^*}
\end{multline}
and 
\begin{multline}\label{b:symbols2}
\mathbf h_+^{1/2}(x) \left.\left(u_{\kappa,j}\,d_\varphi\,\partial_n^+\varphi+ u_{\kappa_0,j+1}\,d_{\varphi^+}\,\partial_n^+\varphi^+\right)\right|_{t=t^*,\,x=x^*}\\ 
=\  -\mathbf h_-^{1/2}(x) \left.\left(u_{\kappa_1,j+1}\,d_{\varphi^-}\,\partial_n^-\varphi^-\right)\right|_{t=t^*,\,x=x^*}
\end{multline}
where $d_\psi:=\left(\det^2\psi_{x\eta}\right)^{1/4}.$

Then condition {\bf(a$_4$)} implies that 
$$
-\,\left.\partial_n^+\varphi\right|_{t=t^*,\,x=x^*}=\left.\partial_n^+\varphi^+\right|_{t=t^*,\,x=x^*}=\tau_+\,.
$$
Similarly, in the case (i), $\,\left.\partial_n^-\varphi^-\right|_{t=t^*,\,x=x^*}=\tau_-\,$. In the case (ii), by \cite[(2.6.23)]{SV2}, we have
$\,\left.\partial_n^-\varphi^-\right|_{t=t^*,\,x=x^*}=i\,\tilde\tau_-\,$.

From the equalities \eqref{b:phase}, \cite[(2.5.3$^\pm$)]{SV2} and \cite[(2.6.25)]{SV2} it follows that at the point $t=t^*$, $x=x^*$
\begin{equation*}
-\tau_+\,\det\varphi_{x\eta}\ =\ \tau_+\,\det\varphi^+_{x\eta}\ =
\begin{cases}
\tau_-\,\det\varphi^-_{x\eta}\ &\text{in the case (i)},\\
i\,\tilde\tau_-\,\det\varphi^-_{x\eta}&\text{in the case (ii)}.
\end{cases}
\end{equation*}
(cf. \cite[(2.6.14)]{SV2}). Consequently, $\left.d_\varphi\right|_{t=t^*,\,x=x^*}=\left.d_{\varphi^+}\right|_{t=t^*,\,x=x^*}$ and 
\begin{equation*}
\left.d_{\varphi^-}\right|_{t=t^*,\,x=x^*}=
\begin{cases}
\sqrt{\tau_+/\tau_-}\;d_\varphi\ &\text{in the case (i)},\\
\left(\tau_+/i\,\tilde\tau_-\right)^{1/2}\,\;d_\varphi&\text{in the case (ii)},
\end{cases}
\end{equation*}
where $\left(\tau_+/i\,\tilde\tau_-\right)^{1/2}$ is a continuous branch of the square root.

In view of the above equalities, the equations \eqref{b:symbols1}, \eqref{b:symbols2} imply that 
\begin{eqnarray*}
b_+^{-1/2} \left( u_{\kappa,j}+ u_{\kappa_0,j+1} \right) &=& \sqrt{\tau_+/\tau_-}\;u_{\kappa_1,j+1}\,,\\
b_+^{1/2} \left( -\tau_+\,u_{\kappa,j}+ \tau_+\,u_{\kappa_0,j+1} \right)
&=&  -\,\sqrt{\tau_+\,\tau_-}\;u_{\kappa_1,j+1}\,,
\end{eqnarray*}
in the case (i), and 
\begin{eqnarray*}
b_+^{-1/2} \left(  u_{\kappa,j}+ u_{\kappa_0,j+1} \right) &=& \left(\tau_+/i\,\tilde\tau_-\right)^{1/2}\;u_{\kappa_1,j+1}\,,\\
b_+^{1/2} \left( -\tau_+\,u_{\kappa,j}+ \tau_+\,u_{\kappa_0,j+1} \right)
&=&  -i\tilde\tau_-\left(\tau_+/i\,\tilde\tau_-\right)^{1/2}\;u_{\kappa_1,j+1}
\end{eqnarray*}
in the case (ii), where we have used $b_+(x) = \mathbf h _+(x) \mathbf h _-^{-1}(x)$. Solving these equations with respect to $\,u_{\kappa_0,j+1}\,$,
$\,u_{\kappa_1,j+1}\,$, we obtain the required formulae for the coefficients $\,\tau_{\kappa_0,j+1}\,$,
$\,\tau_{\kappa_1,j+1}\,$.

\section{Proofs of Theorem \ref{t:local-fio} and Lemma \ref{l:thetas}}\label{s:c}

\subsection{Proof of Theorem \ref{t:local-fio}}

Without loss of generality we shall be assuming that $\conesupp V$ lies in a sufficiently small neighbourhood of a fixed point $(y_0,\eta_0)\in T^*(M\setminus\partial M)$. 

Let $t\in(-\delta,\delta)$ with a sufficiently small $\delta$. If $\delta$ is smaller than the geodesic distance from the support of the Schwartz kernel of $V$ to the boundary $\partial M$ then, 
in view of Theorems \ref{t:parametrix}, Corollary \ref{c:composition} and Remark \ref{r:negative-time},  $VU(t)$ is a FIO associated with the canonical transformation $\Phi_V^t:=\Phi\circ\Phi^t$ with principal symbol $\sigma_{VU(t)}(y,\eta)=\sigma_V(\Phi^t(y,\eta))$,
where 
$$
\Phi^t:(y,\eta)\ \mapsto\ (x^t(y,\eta),\xi^t(y,\eta))
$$
is the shift along geodesics in $M\setminus\partial M$. Denote 
$$
\Phi(y,\eta):=\left(x^*(y,\eta),\xi^*(y,\eta)\right)\,,\qquad \Phi_V^t(y,\eta):=\left(z^t(y,\eta),\zeta^t(y,\eta)\right).
$$

Assume that $\delta$ and $\conesupp V$ are small enough. Then the union $\bigcup_{t\in(-\delta,\delta)}\conesupp V(t)$ is also small and, for $t\in(-\delta,\delta)$, the Schwartz kernel of $VU(t)$ can be represented (modulo smoothing operators) by an oscillatory integral 
\begin{equation}\label{fio-1}
(2\pi)^{-n}\int_{T^*_yM} e^{i\varphi(t;x,y,\eta)}p(t;y,\eta)\,
\left|\det\zeta^t_\eta(y,\eta)\right|^{1/2}
\varsigma(t;x,y,\eta)\,\dr\eta\,,
\end{equation}
with a phase function given by the equality
\begin{equation}\label{varphi-1}
\varphi(t,x,y,\eta)\ =\ (x-z^t(y,\eta))\cdot\zeta^t(y,\eta)
\end{equation}
in a local coordinate system such that $\det\zeta_\eta^t(y,\eta)\ne0$, an amplitude $p$ with small conic support, and a cut-off function $\varsigma$ satisfying the conditions of Subsection \ref{FIO:def} (see Appendix \ref{s:a}).

\begin{lem}\label{l:fio-local}
Let $\Sigma_\delta(\Phi)$ be the set of points $(y,\tilde\eta)\in S^*M$ such that $\Phi_V^{t^\star}(y,\tilde\eta)=(y,\tilde\eta)$ at some time $t^\star=t^\star(y,\tilde\eta)\in(-\delta,\delta)$.
If the Schwartz kernel of $VU(t)$ is given by \eqref{fio-1} and $\hat\rho\in C_0^\infty(-\delta,\delta)$  then
\begin{multline}\label{fio-2}
\FC^{-1}_{t\to\lambda}\left(\hat\rho(t)\,\Tr\,(VU(t))\right)\\
=(2\pi)^{-n}\lambda^{n-1}\int_{\Sigma_\delta(\Phi)}e^{i\lambda t^\star}\hat\rho(t^\star)\,
p_0(t^\star;y,\tilde\eta)\,
\dr y\,\dr\tilde\eta+o(\lambda^{n-1})
\end{multline}
as $\lambda\to+\infty$, where $t^\star=t^\star(y,\tilde\eta)$ and $p_0$ is the leading homogeneous term of the amplitude $p$.
\end{lem}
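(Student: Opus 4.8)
\medskip

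The plan is to reduce \eqref{fio-2} to a single stationary phase asymptotics in which $\lambda$ is the large parameter. First I would make the trace explicit. For $|t|<\delta$ with $\delta$ smaller than the distance from $\supp\VC$ to $\partial M$, the operator $VU(t)$ is, by Theorem~\ref{t:parametrix}, Corollary~\ref{c:composition} and Remark~\ref{r:negative-time}, a FIO of order zero in $\AC_0$ associated with $\Phi_V^t=\Phi\circ\Phi^t$, and it satisfies condition~\textbf{(C)}; hence $\Tr(VU(t))$ is a well-defined distribution in $t$. Restricting the kernel \eqref{fio-1} to the diagonal $x=y$ and integrating over $M$ gives
\[
\Tr(VU(t))=(2\pi)^{-n}\int_M\int_{T^*_yM}e^{i\varphi(t;y,y,\eta)}\,p(t;y,\eta)\,\bigl|\det\zeta^t_\eta(y,\eta)\bigr|^{1/2}\,\varsigma(t;y,y,\eta)\,\dr\eta\,\dr y,
\]
read as an oscillatory integral (the $\eta$-integration is regularised by integration by parts, using that $\varphi_\eta(t;y,y,\eta)=(y-z^t(y,\eta))\cdot\zeta^t_\eta$ vanishes only where $y=z^t(y,\eta)$).

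\emph{Fourier transform, homogeneity, rescaling.} Multiplying by $\hat\rho(t)$, applying $\FC^{-1}_{t\to\lambda}$ and replacing $p$ by its leading homogeneous term $p_0$ (the remainder $p-p_0\in S^{-1}_{\mathrm{phg}}$ contributes a lower power of $\lambda$), I would pass to polar coordinates $\eta=r\tilde\eta$ on the cotangent fibres, with $r=|\eta|_g$ and $\tilde\eta\in S^*_yM$, so that $\dr\eta\,\dr y=r^{n-1}\dr r\,\dr y\,\dr\tilde\eta$; using that $\varphi$, $\zeta^t_\eta$ and $p_0$ are homogeneous in $\eta$ of degrees $1$, $0$, $0$ and substituting $r=\lambda s$, the left-hand side of \eqref{fio-2} equals, modulo $o(\lambda^{n-1})$,
\[
(2\pi)^{-n-1}\lambda^{n}\int\!\!\int\!\!\int_M\!\!\int_0^\infty e^{i\lambda\Psi(t,s,y,\tilde\eta)}\,\hat\rho(t)\,p_0(t;y,\tilde\eta)\,\bigl|\det\zeta^t_\eta(y,\tilde\eta)\bigr|^{1/2}\,\varsigma_0\,s^{n-1}\,\dr s\,\dr y\,\dr\tilde\eta\,\dr t,
\]
with $\Psi(t,s,y,\tilde\eta)=t+s\,\varphi(t;y,y,\tilde\eta)$; now $\lambda$ is a genuine large parameter.

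\emph{Localisation at $\Sigma_\delta(\Phi)$.} The function $\varphi(t;x,y,\eta)=(x-z^t(y,\eta))\cdot\zeta^t(y,\eta)$ is a generating function of $\Phi_V^t$; since $\Phi$ is a symplectomorphism and $\Phi^t$ is the geodesic flow, $\Phi_V^t$ is the Hamiltonian flow of $\sqrt g\circ\Phi^{-1}$, so on $\{x=z^t\}$ one has $\partial_t\varphi=-(\sqrt g\circ\Phi^{-1})(x,\varphi_x)$, which at a fixed point reduces to $\partial_t\varphi(t;y,y,\eta)=-|\eta|_g$ because $\sqrt g$ is constant along the geodesic flow. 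Differentiating $\Psi$ one finds $\partial_s\Psi=\varphi$, $\partial_t\Psi=1+s\,\partial_t\varphi$, and (using $\varphi_x|_{x=z^t}=\zeta^t$ and $-\varphi_y|_{x=z^t}=\eta$) that, on the support of the amplitude and away from the $O(\lambda^{-\infty})$-negligible ranges where $s$ is near $0$ or large, $\Psi$ has critical points only over the points $(y,\tilde\eta)\in\Sigma_\delta(\Phi)$, at $t=t^\star(y,\tilde\eta)$ and $s=1$; elsewhere repeated integration by parts in $t$ gives $O(\lambda^{-\infty})$.

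\emph{Stationary phase along $\Sigma_\delta(\Phi)$, and the main obstacle.} On the top-dimensional stratum of $\Sigma_\delta(\Phi)$ (dimension $2n-1$ in $S^*M$) the intersection of the critical manifold with the amplitude support is clean: the $(t,s)$-block of the transverse Hessian, namely $\bigl(\begin{smallmatrix} s\,\partial_t^2\varphi & \partial_t\varphi\\ \partial_t\varphi & 0\end{smallmatrix}\bigr)$, already has nonzero determinant $-|\eta|_g^2$; the lower-dimensional strata contribute $o(\lambda^{n-1})$ and carry $\dr y\,\dr\tilde\eta$-measure zero, consistently with \eqref{fio-2}. The stationary phase formula with parameter $\lambda$ transverse to the critical manifold yields a factor $2\pi/\lambda$, so the prefactor becomes $(2\pi)^{-n-1}\cdot2\pi\cdot\lambda^{n}\lambda^{-1}=(2\pi)^{-n}\lambda^{n-1}$. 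The one step that requires genuine work — and the main obstacle — is to verify that the normalising factor $\bigl|\det\zeta^t_\eta\bigr|^{1/2}=\bigl|\det\varphi_{x\eta}(x^*,y,\eta)\bigr|^{1/2}$ built into the definition of a FIO, the polar Jacobian $r^{n-1}$, and the square-root-of-determinant and signature factors coming from the stationary phase transverse to $\Sigma_\delta(\Phi)$ combine exactly to give $p_0(t^\star;y,\tilde\eta)$ integrated against the canonical measure $\dr y\,\dr\tilde\eta$ on $S^*M$, with no residual density and with vanishing signature contribution. This cancellation — which is precisely the point of the factor $|\det\varphi_{x\eta}|^{1/2}$ in the normalisation of FIO symbols — is carried out for a general homogeneous canonical transformation in \cite[Chapter~2 and Appendices~C--D]{SV2}, and I would invoke it.
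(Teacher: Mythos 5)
Your proposal follows the same outline as the paper's own proof: write the trace as an oscillatory integral on the diagonal, rescale $\eta=\lambda s\tilde\eta$ so that $\lambda$ becomes a genuine large parameter in the phase $\Psi=t+s\varphi$, observe that $\partial_t\varphi=-|\tilde\eta|_g$ on $\{x=z^t\}$ (the key fact, obtained by combining homogeneity with invariance of the $1$-form under $\Phi$), and apply stationary phase; the $(t,s)$-Hessian determinant $-(\varphi_t)^2$ and the identification of full critical points with fixed points of $\Phi^{t^\star}_V$ both match the paper.

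Where there is a genuine gap is the localization onto $\Sigma_\delta(\Phi)$. You propose to treat the full critical set of $\Psi$ as a clean-intersection critical manifold, stratify $\Sigma_\delta(\Phi)$, and assert that lower-dimensional strata contribute $o(\lambda^{n-1})$. Nothing in the hypotheses of the lemma guarantees that the fixed-point set of a general homogeneous canonical transformation is smooth or admits such a stratification, and the clean-intersection hypothesis is never verified; moreover ``repeated integration by parts in $t$'' does not control the region where the $(t,s)$-equations are stationary but the base equations are not — there one must integrate by parts in $(y,\tilde\eta)$, which brings one straight back to the regularity of $\Sigma_\delta(\Phi)$. The paper sidesteps this: it performs stationary phase only in the two variables $(t,r)$, treating $(y,\tilde\eta)$ as parameters. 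That $2\times 2$ Hessian is unconditionally nondegenerate, so no assumption on the fixed-point set is needed; the result is \eqref{fio-4}, an integral over all of $S^*M$ against the oscillating factor $e^{i\lambda t^\star}$. The restriction from $S^*M$ to $\Omega=\Sigma_\delta(\Phi)$ in \eqref{fio-5} is then a separate, elementary measure-theoretic step, and the cancellation of normalizations ($r^*=1$, $|\det\zeta^{t^\star}_\eta|^{1/2}=1$ on a full-measure subset of $\Omega$) is extracted from \eqref{zeros} rather than being invoked as an external black box. Your deferral of this last cancellation to \cite{SV2} is acceptable, but the stratification/clean-intersection step is a real hole that the paper's two-stage argument is specifically designed to avoid.
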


\begin{proof}

Clearly, $z^t=x^*\left(x^t(y,\eta),\xi^t(y,\eta)\right)$. Therefore
$$
\frac{\dr}{\dr t}z^t\ =\ x^*_y(x^t,\xi^t)\,\frac{\dr}{\dr t}x^t\;+\;
x^*_\eta(x^t,\xi^t)\,\frac{\dr}{\dr t}\xi^t\,.
$$
Since $\Phi$ preserves the 1-form $\xi\cdot\dr x$, we have
\begin{eqnarray*}
\left(x^*_y(x^t,\xi^t)\right)^T\zeta^t &=& \left(x^*_y(x^t,\xi^t)\right)^T \xi^*(x^t,\xi^t)\ =\ \xi^t\,,\\
\left(x^*_\eta(x^t,\xi^t)\right)^T\zeta^t &=& \left(x^*_\eta(x^t,\xi^t)\right)^T\xi^*(x^t,\xi^t)\ =\ 0
\end{eqnarray*}
and, consequently, $\zeta^t\cdot\frac{\dr}{\dr t}z^t=\xi^t\cdot\frac{\dr}{\dr t}x^t$. By Euler's identity for homogeneous functions, 
$\,\xi^t\cdot\frac{\dr}{\dr t}x^t=\xi^t\cdot h_\xi(x^t,\xi^t)=h(x^t,\xi^t)\,$
where $h(x,\xi)=\sqrt{g(x,\xi)}$. Now, differentiating \eqref{varphi-1}, we see that
\begin{equation}\label{varphi-2}
\varphi_t(t;x,y,\eta)\ =\ -\sqrt{g(x^t,\xi^t)}\,+\,(x-z^t(y,\eta))\cdot\frac{\dr}{\dr t}\,\zeta^t(y,\eta)\ \ne\ 0
\end{equation}
for all $(t,x,y,\eta)\in\supp\varsigma$ provided that $\supp\varsigma$ is small enough.

Let
\begin{equation}\label{fio-p}
\tilde p(t;y,\eta)\ =\ \hat\rho(t)\,p(t;y,\eta)\left|\det\zeta^t_\eta(y,\eta)\right|^{1/2}
\varsigma(t,y,y,\eta)\,.
\end{equation}
Then
\begin{multline*}
\FC^{-1}_{t\to\lambda}\left(\hat\rho(t)\,\Tr\,(VU(t))\right)\\
=\ 
(2\pi)^{-n-1}\int\int_{T^*_yM} e^{i(\varphi(t;y,y,\eta)+\lambda t)}\,\tilde p(t;y,\eta)\,\dr y\,\dr\eta\,\dr t\, + O(\lambda^{-\infty}).
\end{multline*}
Changing variables $\eta=\lambda r\tilde\eta$, where $r\in[0,+\infty)$ and $\tilde\eta$ are coordinates on the cosphere $S^*_yM$, we obtain
\begin{multline}\label{fio-3}
\FC^{-1}_{t\to\lambda}\left(\hat\rho(t)\,\Tr\,(VU(t)\Op(\chi))\right)\\
=\ 
(2\pi)^{-n-1}\lambda^n\iint\int_{S^*_yM} e^{i\lambda\psi(t;r,y,\tilde\eta)}\,\tilde p(t;y,\lambda r\tilde\eta)\,
\dr y\,\dr\tilde\eta\,\dr r\,\dr t + O(\lambda^{-\infty}),
\end{multline}
with 
$$
\psi(t;r,y,\tilde\eta)\ =\ r\varphi(t;y,y,\tilde\eta)+t\,.
$$

Now we are going to apply the stationary phase formula with respect to $t$ and $r$ (see \cite[Appendix C]{SV2} for justification of this procedure). In view of \eqref{varphi-2},
$$
\det\,\begin{pmatrix}
\psi_{rr} & \psi_{rt}\\ \psi_{tr} & \psi_{tt}
\end{pmatrix}
\ =\ -(\varphi_t)^2\ \ne\ 0
$$
on $\supp\tilde p$. Thus all stationary points of $\psi$ lying in $\supp\tilde p$ are non-degenerate. They are given by the equations
\begin{equation}\label{st-points}
\varphi(t;y,y,\tilde\eta)=0 \quad\text{and}\quad r\varphi_t(t;y,y,\tilde\eta)+1=0\,.
\end{equation}
By \eqref{varphi-2}, the first equation either does not have any solutions, or has a unique solution $t^\star(y,\tilde\eta)$ smoothly depending on $(y,\tilde\eta)$. Here we have used the implicit function theorem and assumed that $\delta$ is small enough so that $t^\star(y,\tilde\eta)$ is defined on
$\supp\tilde p$.

In the first case \eqref{fio-3} is a rapidly decreasing function of $\lambda$ and
$z^t(y,\eta)\ne y$ for all $(t;y,\eta)\in\supp\tilde  p$, which implies that $\Phi_V^t(y,\eta)\ne(y,\eta)$ for all $(y,\eta)\in\supp p$ and $t\in \supp\hat\rho$.

In the second case, by the stationary phase formula, 
\begin{multline}\label{fio-4}
\FC^{-1}_{t\to\lambda}\left(\hat\rho(t)\,\Tr\,(VU(t))\right)\\
=\ 
(2\pi)^{-n}\lambda^{n-1}\int_{S^*_yM} e^{i\lambda t^\star}r^*\,\tilde p(t^\star;y,\lambda r^*\tilde\eta)\,
\dr y\,\dr\tilde\eta+o(\lambda^{n-1})
\end{multline}
as $\lambda\to+\infty$, where $r^*=r^*(y,\tilde\eta):=\left|\varphi_t(t^\star;y,y,\tilde\eta)\right|^{-1}$. One can easily show that the right hand side of \eqref{fio-4} coincides with
\begin{equation}\label{fio-5}
(2\pi)^{-n}\lambda^{n-1}\int_\Omega e^{i\lambda t^\star}r^*\,\tilde p(t^\star;y,\lambda r^*\tilde\eta)\,
\dr y\,\dr\tilde\eta+o(\lambda^{n-1})\,,
\end{equation}
where 
$$
\Omega\ =\ \{(y,\tilde\eta)\in S^*M\,:\,t^\star_y(y,\tilde\eta)=t^\star_\eta(y,\tilde\eta)=0\}
$$
is the set of stationary points of the function $t^\star$. 

Differentiating the identity $\varphi(t^\star,y,y,\tilde\eta)\equiv0$ and taking into account \eqref{varphi-2}, we see that $\Omega$ consists of the points $(y,\tilde\eta)$ such that
$$
\varphi_\eta(t^\star,y,y,\tilde\eta)\ =\ (y-z^{t^\star}(y,\tilde\eta))\cdot\zeta_\eta^{t^\star}(y,\tilde\eta)\ =\ 0
$$
and
$$
\varphi_y(t^\star,y,y,\tilde\eta)\ =\ (I-z_y^{t^\star}(y,\tilde\eta))^T\zeta^{t^\star}(y,\tilde\eta)+(y-z^{t^\star}(y,\tilde\eta))\cdot\zeta_y^{t^\star}(y,\tilde\eta)\ =\ 0\,.
$$
The first equation implies that $y=z^{t^\star}(y,\eta)$. Since  $\Phi_V^t$ preserves the 1-form $\xi\cdot\dr x$, the vector-function $z_y^{t^\star}(y,\eta))^T\zeta^{t^\star}(y,\eta)$ in the second equation is identically equal to $\tilde\eta$. Therefore the second equation yields $\zeta^{t^\star}(y,\tilde\eta)=\tilde\eta$. 

Thus we see that $(y,\tilde\eta)\in\Omega$ if and only if $\Phi_V^{t^\star}(y,\tilde\eta)=(y,\tilde\eta)$. From here and \eqref{varphi-2} it follows that $r^*(y,\tilde\eta)=1$ for all $(y,\tilde\eta)\in\Omega$. Recall that 
\begin{equation}\label{zeros}
\begin{split} 
&\text{\sl for every $C^\infty$-function the set of zeros of infinite} \\
&\text{\sl order has full measure in the set of all its zeros.}
\end{split}
\end{equation}
In particular, this implies that $\nabla_\eta(\zeta^{t^\star}(y,\eta)-\tilde\eta)=\zeta_\eta^{t^\star}(y,\eta)-I=0$ 
and henceforth $\det \zeta_\eta^{t^\star}=1$
on a set of full measure in $\Omega$. Now, substituting \eqref{fio-p} into \eqref{fio-5} and removing lower order terms of $p$, we obtain the required result.
\end{proof}

Let $\Op(\chi)$ be the operator of multiplication by an arbitrary function $\chi\in C_0^\infty(M\setminus\partial M)$ such that $V=\Op(\chi)\,V=V\,\Op(\chi)$. Then $\sum_{m=0}^3 i^m\,V_m^*V_m=V$ and, consequently,
$\,N_V(\lambda)=\sum_{m=0}^3 i^m\,N_{V^*_mV_m}(\lambda)\,$, where $V_m:=\frac12\left(V+i^m\Op(\chi)\right)$. Note that $V^*_mV_m$ are linear combinations of FIOs lying in $\AC_0$. Applying Lemma \ref{l:fio-local} to these FIOs and taking into account \eqref{rho2}, we see that 
$$
\rho*N'_{V^*_mV_m}(\lambda)
=\FC^{-1}_{t\to\lambda}\left(\hat\rho(t)\,\Tr \left(V^*_mV_mU(t)\right)\right)
=O(\lambda^{n-1})\,,\qquad\lambda\to+\infty\,,
$$
whenever $\supp\rho\subset(-\delta,\delta)$ for a sufficiently small $\delta$. Assume, in addition, that the function $\rho$ is even and nonnegative, and $\hat\rho(0)=1$. Then, since the functions $N_{V^*_mV_m}$ are nondecreasing, standard Tauberian theorems imply that 
$$
\left|N_{V^*_mV_m}(\lambda)-\rho*N_{V^*_mV_m}(\lambda)\right|\ =\ O(\lambda^{n-1})\,,\qquad\lambda\to+\infty\,,
$$
(see, for instance, \cite[Theorem 1.3]{Sa2}).

Clearly, the same estimate holds for the function $N_V$. 
Applying \eqref{rho2} and integrating \eqref{fio-2}, we obtain
\begin{multline}\label{fio-6}
N_V(\lambda)\ =\ \rho*N_V(\lambda)\ =\ 
\int_0^\lambda\FC^{-1}_{t\to\mu}\left(\hat\rho(t)\,\Tr\,(VU(t))\right)\dr\mu\\
=\ (2\pi)^{-n}\int_0^\lambda\mu^{n-1}\int_{\Sigma_\delta(\Phi)}e^{i\mu t^\star}\hat\rho(t^\star)\,
p_0(t^\star;y,\tilde\eta)\,
\dr y\,\dr\tilde\eta\,\dr\mu+o(\lambda^n)\,.
\end{multline}
If $\eps>0$ and $\Sigma_{\delta,\eps}(\Phi):=\{(y,\tilde\eta)\in\Sigma_\delta(\Phi) \,:\, |t^\star(y,\tilde\eta)|>\eps\}$ then
$$
\int_0^\lambda\mu^{n-1}\int_{\Sigma_{\delta,\eps}(\Phi)}e^{i\mu t^\star}\hat\rho(t^\star)\,
p_0(t^\star;y,\tilde\eta)\,
\dr y\,\dr\tilde\eta\,\dr\mu\ =\ O(\lambda^{n-1})\,.
$$
Since 
$\,\Sigma_\delta(\Phi)\setminus\left(\bigcup_{\eps>0}\Sigma_{\delta,\eps}(\Phi)\right)=\Sigma_0(\Phi)=\mathrm{Fix}(\Phi)\,$,
letting $\eps\to0$ we see that the right hand side of \eqref{fio-6} coincides with
\begin{multline*}
 (2\pi)^{-n}\int_0^\lambda\mu^{n-1}\int_{\mathrm{Fix}(\Phi)}
p_0(0;y,\tilde\eta)\,
\dr y\,\dr\tilde\eta\,\dr\mu\;+\;o(\lambda^n)\\
 =\  (2\pi)^{-n}n^{-1}\lambda^n\int_{\mathrm{Fix}(\Phi)}p_0(0;y,\tilde\eta)\,\dr y\,\dr\tilde\eta\;+\;o(\lambda^n)\,.
\end{multline*}

In view of \eqref{zeros}, 
$\,x^*_\eta=\left.\varphi_{\eta\eta}\right|_{x=x^*}=0\,$
on a set of full measure in $\mathrm{Fix}(\Phi)$, where $\varphi$ is the phase function \eqref{varphi-1}. Now \eqref{sigma-theta}
implies that $p_0(0;y,\tilde\eta)=i^{\Theta_\Phi(y,\tilde\eta)}\sigma_V(y,\tilde\eta)$ on a set of full measure, and \eqref{Lambda}
follows from the Weyl asymptotic formula for the counting function $N(\lambda)$.

\subsection{Proof of Lemma \ref{l:thetas}}
Let $\varphi^{(j)}$ and $\varphi$ be arbitrary functions associated (in the sense of Subsection \ref{FIO:def}) with the transformations \eqref{Phi-12} and \eqref{Phi}.
The condition {\bf(a$_4$)} and \eqref{zeros} imply that
\begin{equation}\label{thetas-1}
x^*_\eta=z^{(1)}_\eta-z^{(2)}_\eta=0\quad\text{and}\quad \left.\varphi_{\eta\eta}\right|_{x=x^*}=\left.\left(\varphi^{(1)}_{\eta\eta}-\varphi^{(2)}_{\eta\eta}\right)\right|_{x=x^*}=0
\end{equation}
on a set of full measure in $\mathrm{Fix}(\Phi)$.

Let $V_j$ and $V$ be arbitrary FIOs given by the integrals \eqref{integral} with the phase functions $\varphi^{(j)}$ and $\varphi$, and let $p_{0,j}$ and $p_0$ be the leading homogeneous terms of corresponding amplitudes. In view of \eqref{sigma-theta} and \eqref{thetas-1}, we have 
\begin{equation}\label{thetas-2}
i^{\Theta_\Phi}\sigma_V\ =\ p_0\quad\text{and}\quad  i^{\Theta_{\Phi_1}}i^{-\Theta_{\Phi_2}}\,\sigma_{V_1}\,\overline{\sigma_{V_2}}\ =\ p_{0,1}\,\overline{p_{0,2}}
\end{equation}
on a set of full measure in $\mathrm{Fix}(\Phi)$. In particular, this shows that the restrictions of $p_0$ and $p_{0,1}\,\overline{p_{0,2}}$ to this set do not depend on the choice of the phase functions. 

Assume now that the FIOs $V_j$ have sufficiently small conic supports, the phase functions $\varphi^{(j)}$ are given by \eqref{phi}, and $V=V_2^*V_1$. Then, as was shown in Subsection \ref{s:a2}, $V$ is represented by the integral \eqref{composition4} with the phase function \eqref{composition3} and an amplitude  with leading homogeneous term $p_{1,0}(y,\eta)\,\overline{p_{2,0}\left(\Phi(y,\eta)\right)}$.
From here and \eqref{thetas-2} it follows that $i^{\Theta_\Phi}\sigma_V=i^{\Theta_{\Phi_1}}i^{-\Theta_{\Phi_2}}\,\sigma_{V_1}\overline{\sigma_{V_2}}$ on a set of full measure in $\mathrm{Fix}(\Phi)$.

\section{ An example where QE holds 
  by~Yves~Colin~de~Verdi\`ere}

\subsection{Introduction}
The goal of this Appendix is to provide an example
of QE for a discontinuous Riemannian metric.
We will first discuss generalities on ergodicity for  Markov
semi-groups in order to get a simple test of ergodicity via the
Poincar\'e maps associated to the singular manifold.
We will then present  examples of metrics on the $2-$sphere whose
geodesic flows are ergodic: these metrics are obtained by glueing together two copies of
an Euclidian disk of radius $1$ via a generic diffeomorphism.  Finally, we will discuss Thom-Sard
transversality in order to check that we have QE for a generic subset
of these metrics. 

\subsection{Ergodicity of Markov semi-groups}
We will denote by $Z$ the unit cotangent bundle of $(X,g)$
and by $d\omega$ the normalized  Liouville measure (a probability)  on $Z$.
We will denote by $G_t$ the semi-group
denoted by $\Xi _t^c$ in the body of the present paper
(see Definition \ref{def:transfer-op})
It acts on $L^1 (Z, d\omega)$ and $L^\infty (Z,d\omega)$ with norms equal to
$1$. 
We call $G_t$ the ``geodesic flow'' even if it is not a 
(deterministic) flow on $Z$.

\begin{definition} The geodesic flow $G_t$   is {\rm ergodic} if and only
  if
the only functions in $L^\infty (Z,d\omega) $ 
 which are invariant under the semi-group $(G_t)_{t\geq
  0}$
are the  functions which are constant outside a set of measure $0$.
 \end{definition}
As a Corollary of ergodicity, we get the
\begin{pro}
If the geodesic flow is ergodic and $f\in L^1(Z, d\omega)$, we have,
 for almost all
$z \in Z$ and in $ L^1(Z, d\omega)$,
\[ \lim _{T\rightarrow +\infty}\frac{1}{T}\int_0^T G_tf(z)dt  = \int _{Z}
f d\omega ~.\]
\end{pro}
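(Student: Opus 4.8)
The plan is to deduce the statement from the continuous-parameter Dunford--Schwartz (Hopf) ergodic theorem for Markov semigroups, combined with the mean ergodic theorem to pin down the limit. The crucial structural fact is that $G_t=\Xi^{\mathrm c}_t$ is a \emph{doubly stochastic} (Markov) semigroup on $(Z,d\omega)$: from the properties recorded in the body (Definition~\ref{def:transfer-op} and Corollary~\ref{c:sums}) each $G_t$ is positivity preserving, is a contraction on every $L^p(Z,d\omega)$, $1\le p\le\infty$, is an isometry of $L^1(Z,d\omega)$, and fixes the constants, $G_t\mathbf 1=\mathbf 1$. Positivity and the $L^1$-isometry together give $\int_Z G_tf\,d\omega=\int_Z f\,d\omega$ for all $f\in L^1$ (true for $f\ge 0$, since then $\int_Z G_tf\,d\omega=\|G_tf\|_1=\|f\|_1=\int_Z f\,d\omega$, hence for all $f$ by linearity), so the dual semigroup also fixes $d\omega$. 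Thus $(G_t)_{t\ge 0}$ meets the hypotheses of the continuous Dunford--Schwartz theorem.

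First I would establish the abstract convergence. For $f\in L^1(Z,d\omega)$ put $A_Tf:=\frac1T\int_0^T G_tf\,dt$. Von Neumann's mean ergodic theorem for the strongly continuous contraction semigroup $(G_t)$ on the Hilbert space $L^2(Z,d\omega)$ gives $L^2$-convergence of $A_Tf$ for $f\in L^2$, and the maximal ergodic inequality for the Markov semigroup $(G_t)$ supplies a weak $(1,1)$ bound on $\sup_{T>0}|A_Tf|$. A standard Banach-principle argument then yields, for every $f\in L^1(Z,d\omega)$, a function $\bar f\in L^1(Z,d\omega)$ with $A_Tf\to\bar f$ both $d\omega$-a.e. and in $L^1$-norm as $T\to+\infty$. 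The limit $\bar f$ is invariant: for each fixed $s\ge 0$,
\[
G_sA_Tf-A_Tf\ =\ \frac1T\Bigl(\int_T^{T+s}G_tf\,dt-\int_0^{s}G_tf\,dt\Bigr),
\]
which has $L^1$-norm at most $2s\,\|f\|_1/T\to 0$, so letting $T\to+\infty$ and using continuity of $G_s$ on $L^1$ gives $G_s\bar f=\bar f$ for all $s\ge 0$.

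Next I would identify $\bar f$. The ergodicity hypothesis, although stated for $L^\infty$, extends to $L^1$ by a routine truncation: if $g\in L^1$ is invariant then so are $|g|$, $g^{\pm}$ and the bounded functions $g^{\pm}\wedge(n\mathbf 1)$ (invariance of $|g|$ and $g^{\pm}$ follows from $G_t|g|\ge|G_tg|=|g|$ together with equality of integrals, and the truncations from $G_t(g^{+}\wedge n\mathbf 1)\le g^{+}\wedge n\mathbf 1$ with equal integrals); ergodicity makes each $g^{\pm}\wedge(n\mathbf 1)$ a.e.\ constant, and $n\to\infty$ shows $g$ is a.e.\ constant. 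Hence $\bar f\equiv c$ for some constant $c$, and since $\int_Z A_Tf\,d\omega=\int_Z f\,d\omega$ for all $T$ while $A_Tf\to\bar f$ in $L^1$, we get $c=\int_Z\bar f\,d\omega=\int_Z f\,d\omega$, as claimed.

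The step I expect to be the real obstacle is the pointwise (a.e.) part of the conclusion, i.e.\ the maximal ergodic inequality for the continuous-time Markov semigroup $(G_t)$; once this is in hand, a.e.\ convergence follows from the Banach principle. The mean ($L^1$ and $L^2$) convergence and the identification of the limit are comparatively soft, using only von Neumann's theorem and the doubly-stochastic property. One should also note that $t\mapsto G_tf$ must be strongly measurable for the Bochner integrals $A_Tf$ to be well defined; this follows from strong continuity of the semigroup $(G_t)$.
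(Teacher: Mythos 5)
Your argument is correct, and the convergence half runs on the same engine as the paper's: the paper simply cites the Dunford--Schwartz/Krengel pointwise ergodic theorem for Markov semi-groups as a black box, whereas you outline a proof of it (von Neumann mean ergodic theorem on $L^2$, a maximal inequality, and the Banach principle). That is more work than the paper does, but not a different route. Where you genuinely diverge is in identifying the limit $\bar f$. The paper proceeds via a second theorem (extended from Krengel's Lemma~3.3): using the Markov kernel structure $G_tf(x)=\int p_t(x,dy)f(y)$ it shows $\bar f=\mathbf{E}(f\mid\mathcal I)$, the conditional expectation with respect to the $\sigma$-field of invariant sets, and then the corollary (ergodic $\Leftrightarrow$ $\mathcal I$ is trivial) collapses this to the constant $\int_Z f\,d\omega$. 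You instead bypass conditional expectations entirely: you observe $\bar f$ is invariant, and then bootstrap the $L^\infty$ ergodicity hypothesis to $L^1$ by a truncation argument ($g$ invariant $\Rightarrow$ $|g|$, $g^\pm$, $g^\pm\wedge n\mathbf 1$ invariant, using positivity, $G_t\mathbf 1=\mathbf 1$, and the fact that $\int G_t h\,d\omega=\int h\,d\omega$ for $h\ge 0$), which forces $\bar f$ to be a.e.\ constant; the measure-preservation of $G_t$ then pins down the constant. Your identification is more elementary and self-contained (no appeal to the characterization of the limit as a conditional expectation, no explicit use of the Markov kernel representation), while the paper's route buys a cleaner reusable statement ($\bar f=\mathbf E(f\mid\mathcal I)$) that it wants anyway for the subsequent corollary. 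One small caveat: you invoke strong \emph{continuity} of $t\mapsto G_t$ on $L^2$ for the von Neumann step and on $L^1$ for measurability of the Bochner integral; the paper only assumes strong \emph{measurability}, which is all the cited theorem requires, and strong continuity of $\Xi^{\mathrm c}_t$ is not established anywhere in the paper. Your argument would survive replacing ``continuity'' by ``measurability'' (the mean ergodic theorem does not actually need continuity), but as written you are assuming slightly more regularity than the paper grants.
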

This is proved using 
the point-wise ergodic Theorem given in \cite{DS58}, Theorem 5, page 690
or in \cite{Kr85}, Theorem 3.7, page 217: 
\begin{theorem} \label{theo:DS}
 If $\mu $
is a probability measure 
on a space $Z$ and  $(G_t)_{t\geq 0}$ is a strongly measurable semi-group on
  $L^1(\mu)$ whose norms on $L^1(\mu) $ and $L^\infty (\mu)$ are bounded by $1$,
then, for $f\in L^1(\mu )$,  the averages
\[ \frac{1}{T}\int_0^T G_t f(z) dt \]
converge for almost all $z$ and in $L^1( \mu)$
 as $T\rightarrow +\infty $. The limit function
$z\rightarrow \overline{f}(z)$  is invariant
under $G_t$ for all $t$.
\end{theorem}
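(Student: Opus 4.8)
Since this statement is the Dunford--Schwartz pointwise ergodic theorem in continuous time, one could simply invoke \cite{DS58} or \cite{Kr85}; for completeness I outline a proof. The plan is to reduce everything to the discrete one-operator statement together with a maximal inequality. First I would pass to $G:=G_1$, which is a linear contraction on $L^1(\mu)$ and on $L^\infty(\mu)$, hence, by Riesz--Thorin, on every $L^p(\mu)$, $1\le p\le\infty$. Writing $T=N+s$ with $N=\lfloor T\rfloor$ and $s\in[0,1)$, and setting $\tilde f:=\int_0^1 G_u f\,\dr u\in L^1(\mu)$, the semigroup law $G_{k+u}=G^kG_u$ gives $\int_k^{k+1}G_tf\,\dr t=G^k\tilde f$, so that
\[
\frac1T\int_0^T G_tf\,\dr t\ =\ \frac{N}{T}\,\Bigl(\frac1N\sum_{k=0}^{N-1}G^k\tilde f\Bigr)\ +\ \frac1T\int_N^{N+s}G_tf\,\dr t\,.
\]
Thus a.e.\ convergence of the continuous averages follows once we know a.e.\ convergence of the discrete averages of $G$ and that the remainder tends to $0$ a.e.

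The core is therefore the discrete Dunford--Schwartz theorem: for $f\in L^1(\mu)$ the averages $A_Nf:=\frac1N\sum_{k=0}^{N-1}G^kf$ converge $\mu$-a.e. I would prove this in the classical way. Step (i): a weak-type $(1,1)$ estimate for $A^*f:=\sup_N|A_Nf|$. Since $G$ is in general neither positive nor self-adjoint, one cannot apply Hopf's maximal ergodic inequality directly; instead one uses the \emph{linear modulus}, a positive Dunford--Schwartz operator $\bar G$ with $|Gf|\le\bar G|f|$ pointwise, so that $|A_Nf|\le\bar A_N|f|$ and Hopf's inequality for the positive contraction $\bar G$ yields $\mu(A^*f>\lambda)\le\lambda^{-1}\|f\|_1$. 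Step (ii): a.e.\ convergence on a dense subclass of $L^1(\mu)$. Here I would use the von Neumann decomposition $L^2(\mu)=\ker(I-G)\oplus\overline{(I-G)L^2(\mu)}$, so that on the set of $f=h+(g-Gg)$ with $Gh=h$ and $g\in L^\infty(\mu)$ one has $A_Nf=h+\tfrac1N(g-G^Ng)\to h$ a.e.; this set is $L^2$-dense, hence (as $\|\cdot\|_1\le\|\cdot\|_2$) $L^1$-dense. Combining (i) and (ii) via the Banach principle gives a.e.\ convergence of $A_Nf$ for every $f\in L^1(\mu)$, and $L^1$ convergence is the classical mean ergodic theorem for Dunford--Schwartz operators.

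Returning to the display above: applying the discrete result (and its analogue for the positive contraction $\bar G$) to $\tilde f$ and to $\hat f:=\int_0^1|G_uf|\,\dr u\in L^1(\mu)$ shows that $\frac1N\sum_{k=0}^{N-1}G^k\tilde f$ converges a.e., while the remainder is dominated by $\frac1T\bar G^N\hat f$ and $\frac1N\bar G^N\hat f=\frac{N+1}{N}\bar A_{N+1}\hat f-\bar A_N\hat f\to0$ a.e.; since $N/T\to1$, the continuous average converges a.e., and in $L^1$ by the mean ergodic theorem. Write $\overline f$ for the limit. Finally, for fixed $s\ge0$,
\[
\frac1T\int_0^T G_{t+s}f\,\dr t\ =\ G_s\Bigl(\frac1T\int_0^T G_tf\,\dr t\Bigr)\ =\ \frac1T\int_0^T G_tf\,\dr t\ +\ \frac1T\Bigl(\int_T^{T+s}-\int_0^s\Bigr)G_tf\,\dr t\,,
\]
and the last term has $L^1$-norm at most $\frac{2s}{T}\|f\|_1\to0$; letting $T\to\infty$ and using $\|G_s\|_{L^1\to L^1}\le1$ gives $G_s\overline f=\overline f$. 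The main obstacle is Step (i): the maximal inequality in the absence of positivity or self-adjointness, for which the linear-modulus reduction to Hopf's maximal ergodic theorem is the technical heart of the Dunford--Schwartz argument.
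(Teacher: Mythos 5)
The paper does not prove this statement itself; it is quoted directly from Dunford--Schwartz \cite{DS58} (Theorem 5, p.~690) and Krengel \cite{Kr85} (Theorem 3.7, p.~217), as you note at the outset. Your outline correctly reproduces the standard argument found in those references — reduction to the discrete operator $G=G_1$, the weak-$(1,1)$ maximal inequality via the linear modulus and Hopf's lemma, a.e.\ convergence on the dense subclass $\ker(I-G)+(I-G)L^\infty$ from the von Neumann decomposition, the Banach principle, and the telescoping estimates for the remainder term and for invariance — so there is no discrepancy with the paper.
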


\begin{definition} A measurable subset $A\subset Z$ is {\rm invariant}
by $G_t$ if the characteristic function $\chi _A$ of $A$ satisfies:
for all $t$,
 $G_t \chi _A (z)\equiv \chi _A (z)$ where $\equiv $ means equality almost everywhere.
\end{definition}

\begin{theorem} If $G_t$ is   given
by $G_tf(x)=\int_Z p_t (x,dy)f(y)$ with $p_t(x,dy )$ a measurable
family
of probability measures on $Z$ (a Markov kernel),  the function
$\overline{f}$ given in Theorem \ref{theo:DS} is the conditional
expectation
 of $f$ with respect to the 
$\sigma-$field  ${\mathcal I} $ of invariant sets:
\[ \overline{f}={\bf E} (f|{\mathcal I})~.\] \end{theorem}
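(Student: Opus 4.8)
The plan is to verify directly the two defining properties of the conditional expectation $\mathbf{E}(f\mid\mathcal{I})$: that $\overline f$ is $\mathcal{I}$-measurable, and that $\int_A\overline f\,d\omega=\int_A f\,d\omega$ for every invariant set $A\in\mathcal{I}$. The first preliminary, used everywhere below, is to record that $d\omega$ is stationary for the semigroup, i.e. $\int_Z G_t g\,d\omega=\int_Z g\,d\omega$ for all $g\in L^1(Z,d\omega)$ and $t\ge 0$. This is forced by the hypotheses: since $G_t$ is an $L^1$-contraction and $G_t\chi_A\ge 0$, one has $\int_Z G_t\chi_A\,d\omega\le\omega(A)$; applying this to both $A$ and $Z\setminus A$ and summing, using $G_t\mathbf{1}=\mathbf{1}$ (the kernels $p_t(x,\cdot)$ are probabilities), forces equality for indicators, hence for all $g$. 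Consequently the Banach-space adjoint $G_t^\ast$ of $G_t\colon L^1\to L^1$ is a positive operator on $L^\infty$ with $G_t^\ast\mathbf{1}=\mathbf{1}$ and $\int_Z (G_t g)\,h\,d\omega=\int_Z g\,(G_t^\ast h)\,d\omega$.

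For the measurability I would first prove the auxiliary fact that every $G_t$-invariant $h\in L^1$ has all super-level sets $\{h>c\}$ in $\mathcal{I}$. Fixing $t$ and $c$ and subtracting $c$ (using $G_t\mathbf{1}=\mathbf{1}$), one may assume $c=0$. Jensen's inequality for the probability kernel $p_t(x,\cdot)$ and the convex function $s\mapsto s^+$ gives $h^+=(G_th)^+\le G_t(h^+)$ a.e.; likewise $h^-\le G_t(h^-)$, and adding and integrating against the stationary measure forces $G_t(h^+)=h^+$ a.e. Applying Jensen once more, now to the concave functions $s\mapsto\min(ns,1)$ and the nonnegative function $g:=h^+$, gives $G_t\big(\min(ng,1)\big)\le\min(n\,G_tg,1)=\min(ng,1)$ a.e., hence equality by stationarity; letting $n\to\infty$ with monotone convergence yields $G_t\chi_{\{h>0\}}=\chi_{\{h>0\}}$ a.e. Doing this for every $t$ puts $\{h>c\}$ in $\mathcal{I}$. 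Since $\overline f$ is $G_t$-invariant for all $t$ by Theorem \ref{theo:DS}, it follows that $\overline f$ is $\mathcal{I}$-measurable.

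For the integral identity, the key point is that $A\in\mathcal{I}$ is invariant under each adjoint $G_t^\ast$ as well. Indeed $A^c\in\mathcal{I}$, so $\int_A G_t\chi_{A^c}\,d\omega=\omega(A\cap A^c)=0$, which rewrites as $\int_{A^c}G_t^\ast\chi_A\,d\omega=0$; positivity of $G_t^\ast$ gives $G_t^\ast\chi_A=0$ a.e. on $A^c$, and the symmetric argument together with $G_t^\ast\mathbf{1}=\mathbf{1}$ gives $G_t^\ast\chi_A=\chi_A$ a.e. Hence, for $f\in L^1$ and $A\in\mathcal{I}$, $\int_A G_tf\,d\omega=\langle G_tf,\chi_A\rangle=\langle f,G_t^\ast\chi_A\rangle=\int_A f\,d\omega$ for all $t$. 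Averaging in $t$ over $[0,T]$ and letting $T\to\infty$, using $\frac1T\int_0^T G_tf\,dt\to\overline f$ in $L^1$ so that the continuous functional $g\mapsto\int_A g\,d\omega$ passes to the limit, gives $\int_A\overline f\,d\omega=\int_A f\,d\omega$. Combined with the $\mathcal{I}$-measurability, this is precisely $\overline f=\mathbf{E}(f\mid\mathcal{I})$.

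I expect the measurability step to be the real obstacle: for a deterministic flow an invariant function automatically has invariant level sets, whereas for a Markov semigroup this fails at the pointwise level and must be recovered through the convexity estimates above, which genuinely use the stationarity of $\omega$. One should also note in passing that $\mathcal{I}$ is a $\sigma$-algebra (closure under complements is immediate; for intersections $G_t\chi_{A\cap B}\le G_t\chi_A=\chi_A$ and $\le\chi_B$, so $\le\chi_{A\cap B}$, with equality by stationarity, and countable unions follow by monotone convergence), so that $\mathbf{E}(f\mid\mathcal{I})$ is well defined in the first place.
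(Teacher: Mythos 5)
Your proof is correct, and it takes a genuinely different route from the paper's. The paper reduces the continuous-time claim to its discrete-time analogue: it sets $P_\tau:=G_\tau$ and $f_\tau:=\frac1\tau\int_0^\tau G_t f\,dt$, writes $\overline f=\lim_{N\to\infty}\frac1N\sum_{n=0}^{N-1}P_\tau^n f_\tau$ (the paper's formula is missing the factor $\frac1N$, evidently a typo), invokes Krengel's Lemma 3.3 for the Markov chain generated by $P_\tau$ to conclude that $\overline f$ is measurable with respect to each $\sigma$-field $\mathcal I_\tau$ of $P_\tau$-invariant sets, and then uses $\mathcal I=\bigcap_\tau \mathcal I_\tau$. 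That argument is short because the real content—that the ergodic limit of a Markov chain is the conditional expectation—is outsourced to the cited lemma. You instead verify the two defining properties of $\mathbf E(f\mid\mathcal I)$ from scratch: $\mathcal I$-measurability via the Jensen-inequality device (showing that a $G_t$-invariant $L^1$ function has $G_t$-invariant super-level sets, which, as you correctly flag, is the genuine obstacle in the stochastic setting and is automatic only for deterministic flows), and the integral identity via the dual statement $G_t^\ast\chi_A=\chi_A$ for $A\in\mathcal I$, followed by time-averaging and passage to the $L^1$ limit furnished by Theorem \ref{theo:DS}. Your route is self-contained and in fact supplies details the paper's sketch leaves implicit (it addresses measurability explicitly and then asserts ``the conclusion follows''), at the cost of being longer. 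Both arguments rest on the stationarity of $\dr\omega$, which you derive cleanly from $G_t\mathbf 1=\mathbf 1$ and the $L^1$-contraction hypothesis; the paper takes it as understood.
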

\begin{proof}
This result is proved in the book \cite{Kr85} (Lemma 3.3) for 
Markov chains.
Let us show how to extend it to the continuous case:
let us denote, for $\tau >0$, by $f_\tau $  the function
$\frac{1}{\tau }\int _0^\tau G_t f dt $. We have
clearly
\[ \overline{f}=\lim _{N\rightarrow \infty }\sum _{n=0}^{N-1}
P_\tau ^n f_\tau ~.\]
Hence, using the Markov chain generated by $P_\tau $, we
deduce that $\overline{f}$ is measurable with respect to 
${\mathcal I}_\tau $, the $\sigma-$field of $P_\tau $-invariant sets.
Hence $\overline{f}$ is measurable with respect to 
${\mathcal I}$ which is the intersection of the   $\sigma-$fields 
${\mathcal I}_\tau $. The conclusion follows. 
\end{proof}

Hence, we get:
\begin{cor} The geodesic flow  $G_t$ is ergodic if and only if 
all invariant subsets of $Z$ have measure $0$ or $1$.
\end{cor}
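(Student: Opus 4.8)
The plan is to read off both implications directly from Theorem~\ref{theo:DS} and from the subsequent theorem identifying the ergodic average $\overline f$ with the conditional expectation $\mathbf E(f\,|\,\mathcal I)$; essentially no new analysis is needed.

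First I would treat the implication ``ergodic $\Rightarrow$ every invariant set is trivial''. Given a $G_t$-invariant measurable set $A\subset Z$, its indicator $\chi_A$ belongs to $L^\infty(Z,d\omega)$ and satisfies $G_t\chi_A=\chi_A$ for every $t\ge 0$, by the definition of an invariant set; ergodicity then forces $\chi_A$ to agree with a constant off a null set, i.e.\ $\omega(A)\in\{0,1\}$.

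For the converse, assume every invariant subset of $Z$ has measure $0$ or $1$, so that the $\sigma$-field $\mathcal I$ of invariant sets is $\omega$-trivial and hence $\mathbf E(\,\cdot\,|\,\mathcal I)=\int_Z(\,\cdot\,)\,d\omega$ is the constant functional. Let $f\in L^\infty(Z,d\omega)\subset L^1(Z,d\omega)$ be $G_t$-invariant. Then $\frac1T\int_0^T G_tf\,dt=f$ identically in $T$, so the averages in Theorem~\ref{theo:DS} converge to $f$ and, by uniqueness of the limit, $\overline f=f$. Now $G_t$ is the Markov semi-group whose kernel at a point $z$ is the probability measure $\sum_{(x,\xi)\in\Phi^t(z)}w^{\mathrm c}_{(x,\xi)}(t;z)\,\delta_{(x,\xi)}$ (its total mass being $1$ by Corollary~\ref{c:sums}), so the conditional-expectation theorem applies and gives $f=\overline f=\mathbf E(f\,|\,\mathcal I)=\int_Z f\,d\omega$, a constant. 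This is precisely the defining property of ergodicity, and the proof closes.

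The only point I expect to need care is the measurability hypothesis required to invoke Theorem~\ref{theo:DS} and the conditional-expectation theorem, namely that $t\mapsto p_t$ is a measurable family of Markov kernels and $(G_t)$ is strongly measurable on $L^1$; this follows from the measurable dependence of the maps $\Phi^t_\kappa$ and of the weights $w^{\mathrm c}$ on the base point, set up in Sections~\ref{s:billiards} and \ref{s:group}. Should one wish to bypass the conditional-expectation theorem, I would instead note that for $G_t$-invariant $f$ and any $c\in\mathbb R$ the functions $\min\bigl(n(f-c)^+,1\bigr)$ are again $G_t$-invariant (since $G_t$ is a positivity-preserving Markov operator preserving $\int\,d\omega$), so letting $n\to\infty$ exhibits $\{f>c\}$ as an invariant set; its measure being $0$ or $1$ for every $c$ forces $f$ to be constant almost everywhere.
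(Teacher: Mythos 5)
Your proof is correct and follows exactly the route the paper leaves implicit after the conditional-expectation theorem: one direction reads indicator functions of invariant sets as invariant $L^\infty$ functions, the other uses $\overline f=\mathbf E(f\,|\,\mathcal I)$ together with the triviality of $\mathcal I$. The optional bypass at the end (exhibiting $\{f>c\}$ as an invariant set via monotone truncations, using positivity preservation, $G_t\mathbf 1=\mathbf 1$, and the $L^1$-isometry property) is also sound, though it is not what the paper had in mind.
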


\subsection{Poincar\'e maps}\label{sec:Poincare}
We denote $f\equiv g$ if the functions $f$ and $g$ coincide outside a set of measure $0$
and similarly, for subsets $A$ and $B$, $A\equiv B$ if $A\Delta B$ is  of measure $0$.
Let us assume for simplicity that $ Y$ is smooth and cuts $X$ into two
open 
disjoint parts $X\setminus Y=X_+ \cup X_-$.
Let us denote by $Y_\pm  $ the unit ball bundles for $g_{\pm }^\star$ in
$ T^\star Y$, by ${\mathcal Y}=Y_+ \cup Y_-$. Let  $Z=Z_+ \cup Z_- $ be
the decomposition of the unit cotangent bundle of $X$ into the parts lying over 
$X_+ $ and $X_-$, and let $g_t $ be the classical  geodesic flow on $Z$.
If  $z\in \tilde{Z}_+ $, where  $\tilde{Z}_+ $ is the set of the $z\in Z_+$ such that the geodesic
$t\rightarrow  g_t(z)$, starting from $z$,  cuts $Y$ transversally 
for some $>0$ time, we denote   by 
$\tau (z)$ the first hitting time.
We define a map $\pi_+:\tilde{Z}_+ \rightarrow Y_+$
as follows: if $g_{\tau(z)}=(y,\xi)$,
$ \pi_+ (z)=(y, \xi_{|T_yY })$. We define in a similar way a map $\pi_-$.
We will need the following lemma:
\begin{lem} \label{lemm:inv}
 If $A_+ \subset \tilde{Z}_+$ satisfies $\chi_{A_+}(g_t (z))\equiv \chi _{A_+}(z)$ for all $t$ small enough (depending
on $z$!),
then there exists $B_+ \subset Y_+$ such that $A_+ \equiv \pi _+^{-1}(B_+)$.
\end{lem}
{\it Proof:} If $V$ is the vector field which is
the generator of the classical geodesic flow
in $\tilde{Z}_+ $, the assumption implies that  $V(\chi_{A_+})=0$
in the sense of Schwartz distributions.
Since $V$ is a non-vanishing vector field on a manifold, there are local
coordinates $(x_1,x')$  such that $V=\partial_{x_1}$
and the solutions of $\partial T/\partial{x_1}=0$ in the Schwartz space
are the distributions (here functions) $T$ depending only on $x'$.
 Hence the set $A_+ $ 
is the union of all  maximal 
geodesics in $\tilde{Z}_+ $. These geodesics
are inverse images of points in $Y_+$ by $\pi_+$. 
\qed

The Poincar\'e maps $P_\pm :Y_\pm \rightarrow Y_\pm $ are defined as follows:
if $(y,\eta) \in Y_+$, $P_+ (y,\eta)=\pi_+ (y,\xi_+)$ where $\xi_+$ is the lift of $\eta $
to $Z_+$ pointing in the direction of $X_+$. Similarly for $P_-$. 
The Poincar\'e maps $P_\pm $ are not everywhere defined respectively on $Y_\pm $, but almost
everywhere thanks to the Poincar\'e recurrence theorem.

\begin{definition}
We will say that a set $B \subset Y_+ \cup Y_-$ is invariant under $P_+$
and $P_-$ if for any $y \in Y_+$, $P_+ y \in B$ and similarly for
$y\in Y_-$.\end{definition}

We will use the following result in order to prove ergodicity of the
geodesic flow:
%more details
\begin{theorem}Assuming  that 
\begin{itemize}
\item almost all maximal geodesic arcs in $X_\pm $ cut $Y$
  at both ends
\item if $B\subset {\mathcal Y}= Y_+ \cup Y_- $ is invariant under $P_+ $
and $P_-$, then either $B$ or ${\mathcal Y}\setminus B$ are of measure $0$. 
\end{itemize}
the geodesic flow $(G_t)_{t\geq 0 }$ is an ergodic Markov process. 
\end{theorem}
\begin{proof}
Let us assume that the assumptions are satisfied and consider a 
 set $A\subset Z$ invariant under $G_t$. $ A $ is, up to a set of measure $0$, a union of
maximal
geodesic arcs in $X\setminus Y$. Let us consider the traces $B_\pm =\pi_\pm (A_\pm )$
 of $A$ in
${\mathcal Y}$. Let us show that  $B=B_+ \cup B_-$ is  invariant under $P_+$ and $P_-$ modulo sets
of measure $0$.
Let us denote by $w^+_+:Y_+\rightarrow ]0,1]$ and $w^+_-Y_+ \rightarrow [0,1[ $ be the coefficients
 whose sum is $1$
 describing the repartition of the energies after the hitting of the geodesic coming from $Z_+$
and define in a similar way coefficients $w_\pm^- :Y_- \rightarrow [0,1]$.
From the invariance of $A$ under $G_t$ and Lemma \ref{lemm:inv}, we get the  equations
\[ \chi _{B_+}(\zeta) \equiv w^+_+(\zeta) \chi _{B_+}(P_+\zeta )+ w^+_-(\zeta) \chi _{B_-}(P_-\zeta)~,\]
and 
\[  \chi _{B_-}(\zeta) \equiv w^-_+(\zeta) \chi _{B_+}(P_+\zeta )+ w^-_-(\zeta) \chi _{B_-}(P_-\zeta)~.\]
If $\zeta \in Y_+$, $w^\pm _+ (\zeta) \ne 0$ and similarly for 
$w_-^\pm $. We deduce from the fact that characteristic functions take only $0$ and $1$ as values,
that $P_+ (B_+) \subset B_+ $ and $P_- (B_+) \subset B_- $  modulo sets of measure $0$ and 
similarly for $B_-$.
  If  $B_1$ is the intersections of the
images of $B$ by iterating $P_+$ and $P_-$. The measure of $B\setminus
B_1$ vanishes. The second  assumption  implies that either $B_1$ or
$Y\setminus B_1$ has measure $0$.
From Lemma \ref{lemm:inv}, we have that
$A_\pm \equiv \pi_\pm ^{-1}(B_\pm )$ and then the first assumption finishes  the proof. 

\end{proof}

\subsection{An example: Gluing together two flat disks}
\begin{figure}[hbtp]
\leavevmode \center
\input{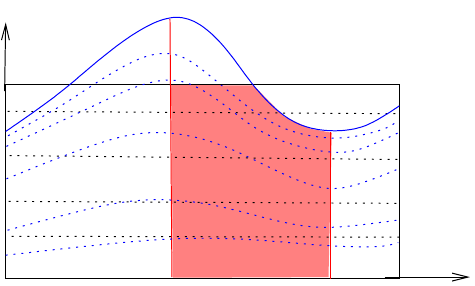_t}
\caption{{\it \small Poincar\'e section.}}
 \label{fig:poincare}
\end{figure}

Let us consider two unit Euclidian disks $D_+ $ and $D_-$
and a diffeomorphism $\chi :\partial D_- \rightarrow \partial D_+$ so that $\chi''(s)\ne
1$
except at a finite number of values of $s$.
Gluing together $D_+ $ and $D_-$ along their boundaries using
$\chi $  gives a topological manifold homeomorphic to $S^2$
with a metric $ g_\chi $ which is flat outside the equator and discontinuous on
the equator except at a finite number of points.

Let us first describe the Poincar\'e section:
as in Section \ref{sec:Poincare}, we define $Y_\pm \subset T^\star \partial
D_+$ with the boundary $\partial D_+=\field{R}/2\pi \field{Z}$   parametrized by the arc length
$s$.
Then $Y_+ =\{ (s,u)|s\in \field{R}/2\pi \field{Z} , |u|<1 \} $ with the symplectic structure $\omega=
du\wedge ds $.
Using the map $\chi $ and its extension $\Xi $ to the cotangent bundle of
$D_-$,
we get $Y_- =\Xi ( \field{R}/2\pi \field{Z} \times ]-1,+1[)$ or more
explicitly 
\[ Y_- = \{ (s,u)|s\in\field{R}/2\pi \field{Z} , |u|< \psi (s) \} \]
with $\psi (s)= 1/\chi'(\chi^{-1}(s))$. 

The Poincar\'e maps $P_\pm : Y_\pm \rightarrow Y_\pm $ are integrable; they
preserve the foliations ${\mathcal F}_\pm $ of $Y_\pm $  defined
by
\begin{itemize}
\item 
 ${\mathcal F}_+ =\{ L^+_\alpha |~ |\alpha | <1 \}$
with $  L^+_\alpha:=\{ (s,\alpha)~|~s\in\rbb/2\pi \zbb  \} $ on which $P_+ $ acts as a
rotation
of angle $\rho _+ (\alpha )=2 \arccos (\alpha ) $ satisfying  $\rho_+'<0 $
\item  ${\mathcal F}_- =\{ L^-_\beta |~ |\beta | <1 \}$
with $  L^-_\beta:=  \{ (s,\beta \psi(s))~|~s\in\rbb/2\pi \zbb \} $ on which $P_- $ acts as a
rotation
of angle $\rho _- (\beta ) $ where $\rho_-'<0 $. 
\end{itemize}
We have $\psi'(s)= \eta(s)  \chi'' (\chi^{-1}(s)))  $ where $\eta $
does not vanish. This implies that the foliations ${\mathcal F}_+ $
and ${\mathcal F}_- $ are transverse in $Y_+ \cap Y_-$  outside a finite
number of segments $I_j:= \{s_j \} \times ] - \min (1,\psi(s)), \min
(1,\psi(s))[ $ with $0\leq s_1 < s_2 < \cdots <s_N <2\pi $.

Our main result is:
\begin{theorem} The geodesic flow on the 2-sphere $(S^2,g_\chi ) $ has
  two 
  ergodic components corresponding in the Poincar\'e sections to $u>0 $
  and to $u<0$.
 \end{theorem}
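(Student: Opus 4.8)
The plan is to deduce the statement from the Poincar\'e-section characterization of ergodicity established above, applied separately on the two pieces of the unit cotangent bundle cut out by the sign of the tangential momentum $u$. The first point is that $\sgn u$ is conserved by the ray-splitting dynamics: the orbit of an interior point of $D_{+}$ or $D_{-}$ meets $Y$, and a short computation shows that a chord of the unit disk has the same sign of counterclockwise tangential momentum at its two endpoints, so $\sgn u$ descends to a function on $Z$ defined away from the null set of grazing orbits; moreover it is unchanged by a reflection (this is exactly the assertion that $P_{+}$ fixes each leaf $L^{+}_{\alpha}=\{u=\alpha\}$) and by a refraction, since Snell's law makes the tangential component of the momentum continuous across $Y$ (equivalently, $P_{-}$ fixes each leaf $L^{-}_{\beta}=\{(s,\beta\psi(s))\}$ and $\psi>0$, so $\sgn u=\sgn\beta$ on $Y_{-}$). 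Hence $Z^{+}=\{z\in Z:u>0\text{ along the orbit of }z\}$ and $Z^{-}$ are $G_{t}$-invariant, both of positive measure, and $Z=Z^{+}\sqcup Z^{-}$ modulo null sets (Assumption \ref{a:bad-points} holds for $g_{\chi}$ just as for the ordinary circular billiard). It therefore suffices to show that $G_{t}$ restricted to $Z^{+}$, with the normalized restricted Liouville measure, is ergodic; the case of $Z^{-}$ is identical with the signs reversed.

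Every maximal geodesic arc in $D_{\pm}$ is a chord, hence meets $Y$ at both ends, so the first hypothesis of the Poincar\'e-section theorem is satisfied, and, applying that characterization to the invariant subsystem $Z^{+}$ (whose Poincar\'e section is $Y^{+}:=(Y_{+}\cup Y_{-})\cap\{u>0\}$), it is enough to prove that any $B\subset Y^{+}$ invariant under both $P_{+}$ and $P_{-}$ has measure $0$ or full measure. Fix such a $B$ of positive measure. Since $\rho_{+}$ is continuous and strictly monotone, the set of $\alpha$ for which $\rho_{+}(\alpha)$ is a rational multiple of $2\pi$ is countable, hence null; for every other $\alpha$ the rotation $P_{+}|_{L^{+}_{\alpha}}$ is ergodic, so by Fubini $\chi_{B}$ is a.e. constant on $L^{+}_{\alpha}$ for a.e. $\alpha$, i.e. $\chi_{B}|_{Y_{+}\cap\{u>0\}}=\phi_{+}(u)$ for some measurable $\phi_{+}\colon(0,1)\to\{0,1\}$. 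In the same way (using $\rho_{-}'<0$) one gets $\chi_{B}|_{Y_{-}\cap\{u>0\}}=\phi_{-}(u/\psi(s))$, and on the overlap $\Omega=Y_{+}\cap Y_{-}\cap\{u>0\}$ the two expressions agree: $\phi_{+}(u)=\phi_{-}(u/\psi(s))$ for a.e. $(s,u)\in\Omega$.

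Next I would use that $\psi$ is continuous and non-constant (which is the content of the hypothesis on $\chi$, and also what makes $\mathcal F_{+}$ and $\mathcal F_{-}$ transverse away from the segments $I_{j}$). Comparing the identity on $\Omega$ at two generic points $s_{0},s_{1}$ with $\psi(s_{0})\neq\psi(s_{1})$ gives $\phi_{-}(v)=\phi_{-}(\lambda v)$ for a.e. $v$ near $0$, with $\lambda=\psi(s_{1})/\psi(s_{0})\neq1$; letting $s_{0},s_{1}$ vary so that $\lambda$ sweeps an interval around $1$ (possible because $\psi$ takes all values in a nondegenerate interval), $\phi_{-}\circ\exp$ becomes invariant under all sufficiently small translations, so $\phi_{-}$ is a.e. equal to a constant $c_{0}\in\{0,1\}$ on some interval $(0,c)$, and then so is $\phi_{+}$ on some $(0,c')$. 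To globalize I would invoke $\psi_{\max}\geq1$, which holds because $\chi$ preserves the total boundary length, so $\chi'$ has mean $1$ and $\psi=1/(\chi'\circ\chi^{-1})\geq1$ somewhere: for a.e. $\alpha\in(0,c')$ with irrational rotation number we get $L^{+}_{\alpha}\subset B$ mod $0$, hence $\phi_{-}(\alpha/\psi(s))=c_{0}$ a.e. on the nonempty open set $\{\psi>\alpha\}$, on which $\alpha/\psi(s)$ fills $(\alpha/\psi_{\max},1)$; letting $\alpha\to0$ gives $\phi_{-}\equiv c_{0}$ on $(0,1)$, and then $\phi_{+}\equiv c_{0}$ on $(0,1)$ as well. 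Thus $B$ is $Y^{+}$ or $\varnothing$ modulo null sets, which proves ergodicity of $G_{t}|_{Z^{+}}$; combined with the first paragraph, $Z^{+}$ and $Z^{-}$ are precisely the two ergodic components of the geodesic flow on $(S^{2},g_{\chi})$.

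I expect essentially all the work to be in the last two paragraphs: the measure-theoretic fact that the two transverse integrable rotation-foliations of the annulus $Y$ admit no nontrivial common invariant set. The delicate points are that only \emph{almost every} leaf carries an ergodic rotation (so Fubini must be combined with the nullity of the rational-rotation-number locus), that the overlap $\Omega$ is disconnected by the non-transversality segments $I_{j}$ (which is why the constant must be propagated using the global relation $\chi_{B}|_{Y_{+}}=\phi_{+}(u)$ and the inequality $\psi_{\max}\geq1$ rather than a naive connectedness argument), and carefully tracking the ``modulo null sets'' qualifiers throughout; by contrast the identification of the decomposition $Z=Z^{+}\sqcup Z^{-}$ in the first paragraph is elementary.
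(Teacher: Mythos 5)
Your overall strategy matches the paper's: use the Poincar\'e-section criterion, observe that $\sgn u$ is conserved so that $\{u>0\}$ and $\{u<0\}$ give invariant pieces, reduce to showing that any set $B\subset Y^+ := (Y_+\cup Y_-)\cap\{u>0\}$ invariant under $P_+$ and $P_-$ has measure $0$ or full measure, and establish via Fubini and ergodicity of a.e.\ irrational rotation that $\chi_B|_{Y_+}$ and $\chi_B|_{Y_-}$ factor through the leaf parameters of $\mathcal{F}_+$ and $\mathcal{F}_-$, i.e.\ $\chi_B=\phi_+(u)=\phi_-(u/\psi(s))$ a.e.\ on the overlap. Where you diverge is in extracting constancy from this relation. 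The paper works in the domains $D_j$ between the non-transversality segments $I_j$, straightens $\mathcal{F}_+$ and $\mathcal{F}_-$ to coordinate foliations there, deduces that $\chi_B$ is a.e.\ constant in each $D_j$, and then propagates through the $I_j$ using the fact that every leaf of $\mathcal{F}_+$ (resp.\ $\mathcal{F}_-$) meets every $D_j$. You instead convert $\phi_+(u)=\phi_-(u/\psi(s))$ into a multiplicative (Steinhaus-type) invariance of $\phi_-$ by comparing two $s$-values. That is a legitimate and rather hands-on way of exploiting the same transversality.

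There is, however, a genuine error in your globalization. You assert that for $\alpha\in(0,c')$ the map $s\mapsto\alpha/\psi(s)$, restricted to $\{\psi>\alpha\}$, has image $(\alpha/\psi_{\max},1)$, and then ``let $\alpha\to 0$''. Since $\psi$ is continuous and strictly positive, $\psi_{\min}>0$; for $\alpha<\psi_{\min}$ the set $\{\psi>\alpha\}$ is the whole circle and the image of $\alpha/\psi(\cdot)$ is $[\alpha/\psi_{\max},\alpha/\psi_{\min}]$, which shrinks to $\{0\}$ as $\alpha\to 0$. So letting $\alpha\to 0$ yields nothing new. The gap is repairable in at least two ways. (a) Choose $s_0,s_1$ in the nonempty open set $\{\psi<1\}$ (nonempty because $\int_0^{2\pi}\psi\,\dr s=2\pi$ forces $\psi_{\min}<1<\psi_{\max}$ whenever $\psi$ is nonconstant); then $u/\psi(s_i)$ ranges over all of $(0,1)$ as $u$ ranges over $(0,\psi(s_i))$, and one obtains $\phi_-(v)=\phi_-(\lambda v)$ a.e.\ on \emph{all} of $(0,1)$ for $\lambda$ in a nondegenerate interval, after which the Steinhaus/Fourier argument gives $\phi_-$ a.e.\ constant on $(0,1)$ directly. (b) Iterate the propagation $\phi_-\equiv c_0$ on $(0,c)$ $\Rightarrow$ $\phi_+\equiv c_0$ on $(0,\min(1,c\psi_{\max}))$ $\Rightarrow$ $\phi_-\equiv c_0$ on $(0,\min(1,c\psi_{\max}/\psi_{\min}))$, each step expanding the interval by a factor $\psi_{\max}>1$ or $1/\psi_{\min}>1$, so it reaches $(0,1)$ in finitely many steps. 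Either fix replaces your ``let $\alpha\to 0$''.
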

\begin{proof}
The idea of the proof is that the refractions will mix the two foliations in an erratic
way depending on the derivative of $\chi $ at the impact point.

Following the result of Section \ref{sec:Poincare}, we have to
consider a subset $B $ of $Y=(Y_+ \cup Y_-) \cap \{ u>0\} $ which is invariant under
$P_+$
and by $P_-$, meaning that $P_+ (B \cap Y_+)= B  \cap Y_+ $ and 
 $P_- (B \cap Y_-)=B \cap Y_- $.  
We want to
prove that $B$ or $Y\setminus B$ has measure $0$. 
Let $B_i$ be the intersection of $B$ with the leaves $L_+^\alpha $  
on which the rotation $\rho_+(\alpha)/2\pi  $ is irrational. Then $B_i
\equiv B$ (i.e. $B\setminus B_i$ has measure $0$) and 
$B_i   \cap  L_+^\alpha $   is measurable and invariant under the rotation
$\rho_+ (\alpha)$. Hence the measure of $B_i   \cap  L_+^\alpha $ is $0 $ or
$2\pi $ by the ergodicity of the irrational rotations of the circle. 
From this we get that $B\cap Y_+ $ is equivalent to a set foliated by ${\mathcal
  F}_+ $. Similarly $B\cap Y_-$ is equivalent to a set foliated by
${\mathcal F}_-$. 
Let us now consider the set $B_j:=B\cap D_j $ with
$D_j:= \{ (s,u)|~ s_j<s <s_{j+1}, 0<u <  \max
(1,\psi(s))  $.  In $D_j$, both foliations are transverse. This
implies  that $B_j$ or $ D_j \setminus B_j $ is of
measure $0$: using smooth coordinates $(x,y)$ in $D_j$ so that
the two foliations correspond to $x={\rm const} $
and $y={\rm const} $ respectively, the indicator function of
$B$ is equivalent to a function depending on $x$ only  and 
to a function depending of $y$ only, hence is equivalent to a constant
$0$ or $1$.
 If $B_j $ is of measure $0$,  then $B\cap Y_+ $ is of
measure $0$ as being foliated by ${\mathcal F}_+$, similarly for $B\cap
Y_-$.
The conclusion follows: all $B_j$ are of measure $0$ or all $D_j
\setminus B_j$ are of measure $0$. 
\end{proof}

If we want to apply the main Theorem of the paper, we have to take into
account the fact that there are two ergodic components, they are
equivalent by the involution $J: (x,\xi)\rightarrow (x,-\xi )$  which on the
quantum level is the complex conjugation $ \phi \rightarrow
\overline{\phi}$. The semi-classical measures associated to real
eigenfunctions are  invariant under $J$. 

We want to prove that QE holds for the manifold
$(S^2,g_\chi)$  for a generic $\chi $.
What remains to do is to check that $\Xi_t^c$ coincides with
$\Xi_t^d$ for a generic $\chi $: this is a consequence of the fact 
that Cauchy data of  recombining geodesics have measure $0$ and will
be proved in what follows to hold for a generic $\chi $.
We recall the 
\begin{definition}
 We denote by $ f \pitchfork  Z$ the fact that the map
$f:X\rightarrow Y $  is
transverse to the sub-manifold  $Z$ of $Y$, i.e. for each $x\in X$ so that
$z=f(x) \in Z$, we have  $T_{z}Y =f'(x)\left(T_x X \right) + T_{z} Z$.
\end{definition}

We start with the following lemma:
\begin{lem}  Let us consider a  word $P_\chi^\alpha =P_-^{a_1} P_+^{a_2} 
\cdots P_+^{a_{2l}} $ with $a_j\in \field{Z} \setminus 0$. For $N\geq 3$
and $D\subset Y_+$ a closed domain with a smooth boundary,   so that $D\subset Y_+$, 
let  
${\mathcal A}^N_D $ be  the manifold  of all  diffeomorphisms of class $C^N$
of $S^1$ so that $P^\alpha _\chi $ is defined
in some open set $V$ containing  $D$.
 Let us denote by $\pi $ the projection of $Y_+ $ onto
$S^1$, by $W$  the diagonal of $S^1 \times S^1$  and, for
$\chi \in {\mathcal A}^N_D $,  by 
$\rho (\chi )$ the $C^2$ map from  $D $ into
$S^1\times S^1$ defined by
\[ \rho (\chi )(z)=\left( \pi (z),\pi \left(P^\alpha _\chi
    (z)\right)\right)~.\]
Then the set of $\chi$'s belonging to ${\mathcal A}^N_D $ so that $\rho (\chi ) \pitchfork W  $ 
is open and dense in ${\mathcal A}^N $.
\end{lem}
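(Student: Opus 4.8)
The plan is to recognize the lemma as an instance of Thom's parametric transversality theorem, and to split the proof into the two usual parts, openness and density. The object to work with is the ``total'' map
\[
F\colon\ {\mathcal A}^N_D\times D\ \longrightarrow\ S^1\times S^1,\qquad F(\chi,z)\ =\ \rho(\chi)(z)\ =\ \bigl(\pi(z),\,\pi(P^\alpha_\chi(z))\bigr),
\]
so that $\rho(\chi)$ is the slice $F(\chi,\cdot)$. If $F$ is transverse to $W$, the parametric transversality theorem gives that the set of $\chi$ with $\rho(\chi)\pitchfork W$ is residual in ${\mathcal A}^N_D$; combined with openness this is exactly the assertion.

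First I would handle openness, which is soft. The diagonal $W$ is a compact codimension-one submanifold of the compact manifold $S^1\times S^1$ and $D$ is compact, so transversality to $W$ is a $C^1$-open condition on maps $D\to S^1\times S^1$. Away from the finitely many singular points $s_j$ the Poincar\'e maps $P_\pm$ depend smoothly on all their arguments, and on ${\mathcal A}^N_D$ the word $P^\alpha_\chi$ is defined on a fixed neighbourhood of $D$; hence $\chi\mapsto\rho(\chi)$ is continuous from ${\mathcal A}^N_D$, with its $C^N$ topology, into $C^2(D,S^1\times S^1)$, in particular into $C^1$. Therefore $\{\chi\in{\mathcal A}^N_D:\rho(\chi)\pitchfork W\}$ is open, and only density remains.

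For density I would show that $F$ is itself transverse to $W$ --- in fact that $F$ is a submersion. Its first component is $\pi(z)$, independent of $\chi$, and its $z$-differential already surjects onto the tangent space of the first circle factor; so it is enough to move the second component $\pi(P^\alpha_\chi(z))$ by varying $\chi$ with $z$ frozen. The relevant structural fact is that the refraction map $P_-$ ends with an application of the gluing map $\chi\colon\partial D_-\to\partial D_+$; since $a_1\neq0$, the last factor of $P^\alpha_\chi=P_-^{a_1}P_+^{a_2}\cdots P_+^{a_{2l}}$ is $P_-^{a_1}$, so the base point of $P^\alpha_\chi(z)$ is $\chi$ evaluated at some point $s_\star(\chi,z)$ of the circle. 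Choosing a variation $\dot\chi$ (i.e.\ a path $\chi_t$ in ${\mathcal A}^N_D$ with $\chi_0=\chi$) supported in a small neighbourhood of $s_\star$ that misses the remaining finitely many circle points at which $\chi^{\pm1}$ is evaluated along the orbit of $z$, one gets that the derivative of $\pi(P^\alpha_{\chi_t}(z))$ at $t=0$ equals $\dot\chi(s_\star)\neq0$ while $\pi(z)$ does not move; hence $dF$ is onto and $F\pitchfork W$. Then, since for $N\geq3$ the map $F$ is of class $C^2$ on the relevant open set, $W$ has codimension $1$ and $\dim D=2$ (so the Sard--Smale index count $\dim D-1=1<2$ is met), the Banach-manifold form of the parametric transversality theorem applies and yields $\rho(\chi)\pitchfork W$ for $\chi$ in a residual subset of ${\mathcal A}^N_D$; as ${\mathcal A}^N_D$ is an open subset of a Banach manifold it is a Baire space, so this residual set is dense, which together with openness completes the argument.

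The step I expect to be the main obstacle is the verification that $F\pitchfork W$, precisely in the case when the orbit of $z$ under the word revisits a point of the circle: one must either argue that some crossing point is visited only once, so that the variation $\dot\chi$ can be localized there, or, more robustly, check directly from the explicit formula for $P_-$ that the contributions of the several occurrences of $\chi^{\pm1}$ enter the derivative of the base point through linearly independent functionals of the jet of $\dot\chi$ at that point, so that they cannot all cancel for every $\dot\chi$. One also has to keep the support of $\dot\chi$ away from the finitely many singular points $s_j$ of $\chi''$, which is arranged by shrinking $D$ if necessary, and to verify the joint $C^2$ regularity of $F$ in $(\chi,z)$ needed for the infinite-dimensional transversality theorem --- this is where the hypothesis $N\geq3$ is used.
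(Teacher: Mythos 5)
Your proposal matches the paper's argument in all essentials: openness is observed to be a soft consequence of compactness, and density is obtained from the Abraham--Thom transversality theorem (with $r=2$, $X=D$, $Y=S^1\times S^1$, $W$ the diagonal) by checking that the evaluation map is transverse to $W$, which is achieved by localizing the variation $\delta\chi$ at a circle point hit by the orbit. The obstacle you flag at the end (orbit revisiting a circle point, so the contributions of several occurrences of $\chi^{\pm1}$ could interfere) is exactly what the paper disposes of at the outset by a preliminary reduction ``by induction on $|\alpha|$'' to the case where the projections to $S^1$ of the orbit points $z, P_+z=z_1,\dots,z_{|\alpha|-1}$ are pairwise distinct, after which $\delta\chi$ can be supported near a single such point.
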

\begin{proof}
By induction on $|\alpha|$, we can assume that we are looking 
only at the case where the projections on $S^1$  of the points
of the $z-$orbit  $(z,P_+ z=z_1, \cdots , z_{|\alpha|-1} )$ are pairwise distinct.

The openness is clear.

The density follows from the transversality Theorem as  stated for example
 in \cite{AB63} and \cite{AR67}, page 48 (see Appendix \ref{app:transv}).
We will apply Theorem \ref{theo:trans} with $r=2$, 
$X =D $, $Y=  S^1 \times  S^1 $ and $W$ the diagonal of $Y$.

The transversal intersection of $\rho ( \chi)$ with $W$ implies that the
set of $z$ for which $\pi (z)=\pi \left(P^\alpha _\chi (z)\right)$
is a submanifold of dimension $1$ of $Y_+$.
Let us consider the evaluation map
${\rm ev}(\chi, z)=(\pi(z),\pi \left( P_\chi ^\alpha (z)\right)$.
The differential $L$  of ${\rm ev}$ at a point $(\chi_0, z_0)$ can be
written
as $L(\delta \chi ,\delta z)= (0, L_1 \delta \chi)  + (\delta s, L_2 \delta z)$.
In order to prove transversality it is enough to prove 
that $L_1$ is surjective. Let us restrict ourselves to variations of
$\chi $ in some small neighborhood of $s_1=\chi ^{-1}(s_0)$ where
$z_0=(s_0,u_0)$. Then we have 
$L_1 (\delta \chi )= \delta \chi (s_1)$.  
\end{proof}

Hence we get that
\begin{pro} For any $N \geq 3$ , the set of $C^N$
  diffeomorphisms
$\chi$'s, whose  set
  of  periodic   points under iterations of $P_+,P_-$ and their inverses  is of measure $0$, 
is generic.
\end{pro}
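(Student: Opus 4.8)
The plan is to deduce the Proposition from the transversality Lemma above by a Baire-category argument, after reducing the set of periodic points to the fixed-point sets of a countable, explicit family of words. Fix $N\ge 3$ and let $\Pi(\chi)$ be the set of points of $Y_+\cup Y_-$ periodic under iterations of $P_+,P_-$ and their inverses; then $\Pi(\chi)=\bigcup_w\mathrm{Fix}(P_\chi^{w})$, the union being over all nonempty words $w$ in $P_+^{\pm1},P_-^{\pm1}$, and it is countable, so it suffices to show that for $\chi$ in a residual subset of the space of $C^N$ diffeomorphisms of $S^1$ every $\mathrm{Fix}(P_\chi^{w})$ has Lebesgue measure zero in the $(s,u)$-coordinates. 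Since $P_+,P_-$ and their inverses are partial diffeomorphisms (hence carry null sets to null sets), amalgamating consecutive factors of the same type only shrinks $\mathrm{Fix}$, and $\mathrm{Fix}(P_\chi^{uv})=u\bigl(\mathrm{Fix}(P_\chi^{vu})\bigr)$; so, up to replacing $\mathrm{Fix}$ by a larger or diffeomorphic set, it is enough to treat (i) pure powers $P_+^a$ and $P_-^a$ with $a\in\field{Z}\setminus\{0\}$, and (ii) alternating words $w=P_-^{a_1}P_+^{a_2}\cdots P_+^{a_{2l}}$ with all $a_j\neq0$, whose domain is contained in $Y_+$ (every reduced word containing both letters is cyclically conjugate to such a word).

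Case (i) needs no genericity. The map $P_+$ is the billiard map of the flat disk $D_+$, independent of $\chi$; it preserves the foliation $\mathcal F_+=\{L_\alpha^+\}$ and acts on $L_\alpha^+$ as rotation by $\rho_+(\alpha)=2\arccos\alpha$, which is strictly monotone. Hence $\mathrm{Fix}(P_+^a)$ is the union of the finitely many leaves $L_\alpha^+$ with $a\,\rho_+(\alpha)\in2\pi\field{Z}$, a finite union of circles in the two-dimensional phase space, which is Lebesgue-null; the same argument, using the strictly monotone $\rho_-$, handles $P_-^a$ (on $Y_-$).

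For case (ii) I would proceed as follows. Choose a countable family $\{D_k\}$ of compact domains with smooth boundary, all contained in $Y_+$, forming a neighbourhood basis of $Y_+$. For an alternating word $w$ and an index $k$ let $\mathcal A_{k,w}$ denote the set $\mathcal A^N_{D_k}$ of the Lemma (the open set of $C^N$ diffeomorphisms $\chi$ for which $P_\chi^{w}$ is defined on a neighbourhood of $D_k$), and let $\mathcal B_{k,w}\subset\mathcal A_{k,w}$ be the subset on which $\rho(\chi)|_{D_k}$ is \emph{not} transverse to the diagonal $W\subset S^1\times S^1$, where $\rho(\chi)(z)=(\pi(z),\pi(P_\chi^{w}(z)))$. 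By the Lemma $\mathcal B_{k,w}$ is relatively closed in $\mathcal A_{k,w}$ with empty interior there; since $\mathcal A_{k,w}$ is open, $\overline{\mathcal B_{k,w}}$ has empty interior in the whole space, i.e.\ is nowhere dense. The space of $C^N$ diffeomorphisms of $S^1$ lifts to an open subset of the Banach space $C^N(S^1,\field{R})$, hence is a Baire space, so $\mathcal G$, the complement of $\bigcup_{k,w}\overline{\mathcal B_{k,w}}$, is residual. If $\chi\in\mathcal G$ and $w$ is alternating, then for any $z\in\mathrm{Dom}(P_\chi^{w})$ pick $k$ with $z\in\mathrm{int}(D_k)\subset D_k\subset\mathrm{Dom}(P_\chi^{w})$; since $\chi\notin\mathcal B_{k,w}$, the set $\{z':\pi(z')=\pi(P_\chi^{w}(z'))\}\cap\mathrm{int}(D_k)$ is a one-dimensional submanifold of the two-dimensional $\mathrm{int}(D_k)$, hence Lebesgue-null, and it contains $\mathrm{Fix}(P_\chi^{w})\cap\mathrm{int}(D_k)$. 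Covering $\mathrm{Dom}(P_\chi^{w})$ by countably many such $D_k$ gives $|\mathrm{Fix}(P_\chi^{w})|=0$; combined with case (i) and the countable union over $w$, this yields $|\Pi(\chi)|=0$ for all $\chi\in\mathcal G$.

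The step I expect to be the real work is organizational rather than geometric: because $P_+,P_-$ are only partially defined, one cannot quote a single ``open and dense'' set for each word but must argue through nowhere-dense complements inside the open sets $\mathcal A_{k,w}$ and then patch countably many words and base points into one residual $\mathcal G$; and the reduction in the first paragraph — verifying that amalgamation of like factors and cyclic conjugation really do bring an arbitrary periodic point into case (i) or (ii) while only enlarging or diffeomorphically moving $\mathrm{Fix}$, keeping track of domains of the partial maps — must be checked carefully, though it is entirely elementary.
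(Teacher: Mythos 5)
Your proof is correct and follows the route the paper intends, since the paper states the transversality Lemma and leaves the deduction of the Proposition from it implicit. You supply exactly the missing details: the reduction by amalgamation and cyclic conjugation to pure powers and to alternating words of the form treated in the Lemma, the observation that fixed sets of pure powers $P_\pm^a$ are $\chi$-independently Lebesgue-null because $P_\pm$ preserve the foliations $\mathcal{F}_\pm$ and act as rotations with strictly monotone rotation number on the leaves, and the Baire-category patching of the Lemma's open-dense conclusion over countably many words and base domains $D_k$.
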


This  implies that $\Xi_t^c $ and $\Xi _t^d$ coincide 
for a generic $\chi$. Hence, a small extension of
Theorem \ref{theo:main} applies and we get
\begin{theorem}For a generic $\chi $,   
 any  basis of real eigenfunctions of $\Delta _\chi $
 is  QE. \end{theorem}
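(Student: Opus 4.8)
The plan is to run the proof of Theorem \ref{theo:main} essentially verbatim, but taking into account that the dynamics on $(S^2,g_\chi)$ is not ergodic: by the theorem of the preceding subsection it has exactly two ergodic components, and full equidistribution is then recovered from the reality of the eigenfunctions. Write $S^*M$ for the unit cotangent bundle of $(S^2,g_\chi)$ and $Z_+,Z_-\subset S^*M$ for the two invariant sets identified above (the sign of the tangential momentum at the equator): modulo a set of measure zero (the covectors lying over diameters), $Z_\pm$ are open, conic, $\Xi_t^{\mathrm c}$-invariant, exchanged by the time-reversal involution $J\colon(x,\xi)\mapsto(x,-\xi)$, and $\omega(Z_\pm)=\tfrac12$ since $J$ preserves $\dr\omega$. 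Fix a generic $\chi$, so that by the Proposition of the previous subsection recombining billiard trajectories have Cauchy data of measure zero and hence $\Xi_t^{\mathrm d}=\Xi_t^{\mathrm c}$. Applying the Dunford--Schwartz pointwise ergodic theorem (Theorem \ref{theo:DS}) to the Markov semigroup $\Xi_t^{\mathrm d}$ on $L^1(S^*M,\dr\omega)$ and Colin de Verdi\`ere's identification of the $\sigma$-field of invariant sets with $\{\varnothing,Z_+,Z_-,S^*M\}$ modulo null sets, one gets, for every $f\in L^\infty(S^*M,\dr\omega)$,
\begin{equation*}
2T^{-2}\int_0^T\!\!\int_0^t(\Xi_s^{\mathrm d}f)\,\dr s\,\dr t\ \longrightarrow\ \mathbf E\bigl(f\,\big|\,\{Z_+,Z_-\}\bigr)=2\Bigl(\int_{Z_+}\!f\,\dr\omega\Bigr)\mathbf 1_{Z_+}+2\Bigl(\int_{Z_-}\!f\,\dr\omega\Bigr)\mathbf 1_{Z_-}
\end{equation*}
in $L^1(S^*M,\dr\omega)$ as $T\to+\infty$ (the double Ces\`aro average has the same limit as the single one).

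The next step is the following variant of Theorem \ref{theo:main}: for every self-adjoint $\PsDO$ $A_0\in\AC$ of order zero,
\begin{equation}\tag{$\star$}
\limsup_{N\to\infty}\frac1N\sum_{j=1}^N\bigl|\langle A_0\phi_j,\phi_j\rangle\bigr|\ \le\ \Bigl(\sup|\sigma_{A_0}|\cdot\bigl(\,\bigl|\textstyle\int_{Z_+}\sigma_{A_0}\,\dr\omega\bigr|+\bigl|\int_{Z_-}\sigma_{A_0}\,\dr\omega\bigr|\,\bigr)\Bigr)^{1/2}.
\end{equation}
The key observation is that ergodicity enters the proof of Theorem \ref{theo:main} only once, through the estimate \eqref{5-1} in Step 5; Steps 1--4, Theorem \ref{averagingtheorem}, and the use of Corollary \ref{c:local-pdo} for the cut-off errors are all insensitive to it. Repeating Steps 1--5 with $A$ replaced by $A_0$, but using the displayed two-component ergodic theorem in place of \eqref{5-1}, the averaged term in \eqref{4-2} is now bounded (for $T$ large) by $\sup|\sigma_{A_0}|\bigl(|\int_{Z_+}\sigma_{A_0}|+|\int_{Z_-}\sigma_{A_0}|\bigr)$ instead of being made arbitrarily small, while the cut-off errors still tend to $0$; feeding this into \eqref{3-2} and the Cauchy--Schwarz inequality \eqref{2-1} gives $(\star)$. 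In particular, if $\sigma_{A_0}$ has vanishing mean over $Z_+$ and over $Z_-$ then the left-hand side of $(\star)$ vanishes.

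Finally I would run the decomposition argument, where the reality of $\phi_j$ is used. Let $A\in\AC$ be self-adjoint of order zero (the general case splits off into self- and skew-adjoint parts), and set $c_\pm:=2\int_{Z_\pm}\sigma_A\,\dr\omega$, so that $\tfrac12(c_++c_-)=\int_{S^*M}\sigma_A\,\dr\omega$ because the set of diameters is null. Given $\eps>0$, choose a self-adjoint $\Pi_\eps\in\AC$ with $0\le\sigma_{\Pi_\eps}\le1$ which is identically $\tfrac12$ near $\partial M$ and satisfies $\|\sigma_{\Pi_\eps}-\mathbf 1_{Z_+}\|_{L^2(S^*M,\dr\omega)}<\eps$; this is possible because $Z_+$ is open and conic away from $\partial M$ while $\partial M$ is null, so $\Pi_\eps-\tfrac12 I\in\AC_0$. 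Since $J\phi_j=\phi_j$ and $J$ is antiunitary, $\langle\Pi_\eps\phi_j,\phi_j\rangle=\langle J\Pi_\eps J\,\phi_j,\phi_j\rangle$, where $J\Pi_\eps J\in\AC$ has real symbol $\sigma_{\Pi_\eps}\circ J$; as $J(Z_+)=Z_-$ one gets $\|\sigma_{J\Pi_\eps J}-\sigma_{I-\Pi_\eps}\|_{L^2}<2\eps$ and $|\sigma_{J\Pi_\eps J}-\sigma_{I-\Pi_\eps}|\le1$, so with $E_\eps:=J\Pi_\eps J-(I-\Pi_\eps)\in\AC$ and $(\star)$ applied to $E_\eps$ we obtain, using $\langle\Pi_\eps\phi_j,\phi_j\rangle=\tfrac12+\tfrac12\langle E_\eps\phi_j,\phi_j\rangle$, that $\langle\Pi_\eps\phi_j,\phi_j\rangle$ equals $\tfrac12$ on average up to $O(\sqrt\eps)$. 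Writing $A=c_+\Pi_\eps+c_-(I-\Pi_\eps)+A_{0,\eps}$, the symbol of $A_{0,\eps}$ has mean $O(\eps)$ over each of $Z_\pm$ and bounded sup-norm, so $(\star)$ gives $\limsup_N\frac1N\sum_{j\le N}|\langle A_{0,\eps}\phi_j,\phi_j\rangle|=O(\sqrt\eps)$; combining the pieces with $\langle\Pi_\eps\phi_j,\phi_j\rangle+\langle(I-\Pi_\eps)\phi_j,\phi_j\rangle=1$ yields
\begin{equation*}
\limsup_{N\to\infty}\frac1N\sum_{j=1}^N\Bigl|\langle A\phi_j,\phi_j\rangle-\int_{S^*M}\sigma_A\,\dr\omega\Bigr|\ \le\ C_A\,\sqrt\eps
\end{equation*}
with $C_A$ independent of $\eps$, and letting $\eps\to0$ proves QE. The main obstacle is the bookkeeping at the interface between the two arguments: one must verify that ergodicity is genuinely used in the proof of Theorem \ref{theo:main} only through \eqref{5-1}, keep correct track of the antilinearity of $J$ and of the boundary behaviour of the symbols of $\Pi_\eps$ and $J\Pi_\eps J$ so that all operators stay in the class $\AC$ for which Theorem \ref{averagingtheorem} applies, and invoke the genericity statement $\Xi_t^{\mathrm d}=\Xi_t^{\mathrm c}$ so that the ergodic theorem may be applied to the diagonal dynamics. (The hypothesis that the eigenfunctions are real is essential: a complex eigenbasis can have elements concentrating on a single component $Z_\pm$, for which QE fails.)
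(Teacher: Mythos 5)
Your proposal is correct and follows exactly the route the paper sketches: the paper's ``proof'' of this theorem is a two-sentence reduction (two ergodic components exchanged by $J$, semiclassical measures of real eigenfunctions are $J$-invariant, $\Xi_t^{\mathrm c}=\Xi_t^{\mathrm d}$ generically, hence ``a small extension of Theorem~\ref{theo:main} applies''), and what you have written is precisely that small extension spelled out --- the two-component ergodic averaging replacing \eqref{5-1} in Step~5 to get~$(\star)$, and the symmetrization $A=c_+\Pi_\eps+c_-(I-\Pi_\eps)+A_{0,\eps}$ exploiting $\langle\Pi_\eps\phi_j,\phi_j\rangle=\langle J\Pi_\eps J\,\phi_j,\phi_j\rangle$ to recover the global mean.
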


\begin{rem} Unique Quantum Ergodicity is not satisfied because there
  are infinitely many radial eigenfunctions corresponding to the
  Neumann and the Dirichlet problem for radial functions in the unit
  Euclidian disk. 
\end{rem}

\subsection{The Abraham-Thom transversality Theorem}
 \label{app:transv}

Let us give the statement of the transversality Theorem, due to Ren\'e
Thom,  as given in
\cite{T56,AB63,AR67}:
\begin{theorem} \label{theo:trans} Let $r\geq 1$ and 
 ${\mathcal A},~ X$ and $Y$ be $C^r$ manifolds. We assume that
 ${\mathcal A}$ is a Banach manifold while $\dim X $ and $\dim Y $
are finite. The manifold $X$ is assumed to be compact with  a smooth boundary.
 Consider a $C^r$ map
$\rho : {\mathcal A} \rightarrow C^r (X,Y)$
and $W\subset Y$ a compact  sub-manifold. The
evaluation
map ${\rm ev}:{\mathcal A}\times  X\rightarrow Y $ is defined by ${\rm
  ev}(a,x)=\rho(a)(x)$ and we denote by ${\mathcal A}_W$   the set of the
$a $'s 
in $  {\mathcal A}$ so that 
$ \rho (a) \pitchfork W $.
Then if $r> \max (0, \dim X -{\rm codim} W )$,
and ${\rm ev}\pitchfork W $, then ${\mathcal A}_W $ is 
 open  and dense in  ${\mathcal A}$.
\end{theorem}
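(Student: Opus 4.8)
The plan is to deduce the statement from the Sard--Smale theorem on Fredholm maps of Banach manifolds. First I would consider the evaluation map ${\rm ev}\colon{\mathcal A}\times X\to Y$, which is $C^r$ since $\rho$ is, and use the standing hypothesis ${\rm ev}\pitchfork W$. Because $W$ has finite codimension in $Y$, transversality here amounts to the strong condition that the relevant derivatives have closed complemented range, so the implicit function theorem in Banach manifolds shows that $\Sigma:={\rm ev}^{-1}(W)$ is a $C^r$ submanifold of ${\mathcal A}\times X$ whose ``codimension'' is ${\rm codim}\,W$. Since the $X$-factor is finite dimensional, the restriction to $\Sigma$ of the projection ${\mathcal A}\times X\to{\mathcal A}$ is then a $C^r$ Fredholm map $\pi\colon\Sigma\to{\mathcal A}$ of index $\dim X-{\rm codim}\,W$.

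The core of the argument is the linear-algebra reduction: a point $a\in{\mathcal A}$ satisfies $\rho(a)\pitchfork W$ if and only if $a$ is a regular value of $\pi$. I would prove this by a tangent-space diagram chase at a point $(a,x)\in\Sigma$ with $y:={\rm ev}(a,x)\in W$, using that $T_yY=D{\rm ev}_{(a,x)}\bigl(T_a{\mathcal A}\oplus T_xX\bigr)+T_yW$ and $T_{(a,x)}\Sigma=(D{\rm ev}_{(a,x)})^{-1}(T_yW)$: one checks that $D\pi_{(a,x)}$ surjects onto $T_a{\mathcal A}$ exactly when $D\rho(a)_x(T_xX)+T_yW=T_yY$, i.e.\ exactly when $\rho(a)$ is transverse to $W$ at $x$. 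If instead $\rho(a)(X)\cap W=\varnothing$ then $a\notin\pi(\Sigma)$, so $a$ is (vacuously) a regular value and $\rho(a)\pitchfork W$ (vacuously) as well; hence the equivalence holds globally.

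Now the hypothesis $r>\max(0,\dim X-{\rm codim}\,W)$ is precisely the condition $r>\max(0,{\rm index}\,\pi)$ under which the Sard--Smale theorem guarantees that the set of regular values of $\pi$ is residual in ${\mathcal A}$; by the reduction this residual set equals ${\mathcal A}_W$, so ${\mathcal A}_W$ is dense. For openness I would use that $X$ is compact (with smooth boundary) and $W$ is compact, hence closed: the set of pairs $(a,x)$ for which $\rho(a)(x)\notin W$, or $\rho(a)(x)\in W$ with $\rho(a)$ transverse to $W$ at $x$, is open in ${\mathcal A}\times X$ (surjectivity of the induced map $T_xX\to T_yY/T_yW$ onto a finite-dimensional space is an open condition), and a tube-lemma / finite-subcover argument over $X$ then shows that ${\mathcal A}_W$, the set of $a$ whose slice $\{a\}\times X$ lies in this open set, is open in ${\mathcal A}$. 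Together these give that ${\mathcal A}_W$ is open and dense.

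The hard part will be the infinite-dimensional bookkeeping in the first paragraph: one must check carefully that ${\rm ev}\pitchfork W$ really makes $\Sigma$ a bona fide $C^r$ Banach submanifold (closed, complemented range of the appropriate derivative) and that the projection $\pi$ is genuinely Fredholm of the stated index -- both of which rely on $\dim X<\infty$ and on the precise implicit function theorem for Banach manifolds. Secondary technical points are invoking the correct version of the Sard--Smale theorem (which needs ${\mathcal A}$, or at least $\Sigma$, to be second countable, or $\pi$ to be $\sigma$-proper) and, because $X$ has a boundary, reading transversality as transversality on both ${\rm int}\,X$ and $\partial X$.
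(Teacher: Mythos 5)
The paper does not prove Theorem \ref{theo:trans}: it is stated as a known result and attributed, with citations, to Thom, Abraham, and Abraham--Robbin \cite{T56,AB63,AR67}. So there is no internal proof for your attempt to be compared against; the relevant comparison is with the argument in those references, in particular \cite{AR67}.

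Your sketch is correct and is essentially the Abraham--Robbin proof. The three pillars are the right ones: (i) ${\rm ev}\pitchfork W$ together with finite codimension of $W$ makes $\Sigma={\rm ev}^{-1}(W)$ a $C^r$ Banach submanifold (via the implicit function theorem, using that the derivative has closed, complemented kernel because the normal direction is finite dimensional); (ii) the linear-algebra equivalence ``$a$ is a regular value of $\pi=\mathrm{pr}_{\mathcal A}|_\Sigma$ iff $\rho(a)\pitchfork W$'' is the key reduction, and your diagram chase is exactly the standard one; (iii) Sard--Smale applied to the Fredholm map $\pi$ of index $\dim X-{\rm codim}\,W$ under the hypothesis $r>\max(0,\mathrm{index}\,\pi)$ gives density, while compactness of $X$ and closedness of $W$ give openness by the tube-lemma argument. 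You also correctly flag the two genuine caveats: Sard--Smale needs $\Sigma$ (equivalently, in the separable setting, $\mathcal A$) to be second countable, or $\pi$ to be $\sigma$-proper; and since $X$ has boundary, $\rho(a)\pitchfork W$ must be read as transversality of both $\rho(a)$ and $\rho(a)|_{\partial X}$ to $W$, and correspondingly $\Sigma$ is a manifold with boundary. Both points are handled in \cite{AR67}, and in the specific application in the Appendix (diffeomorphisms of $S^1$ of class $C^N$) second countability holds.
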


\vskip2cm

\noindent{\bf Acknowledgements.}
The authors would like to thank Steve Zelditch for useful discussions and comments. D.J. and Y.S. thank
the Fields Institute in Toronto and all of the authors are grateful to the CRM
in Montreal for the hospitality. 
Y.CdV thanks S\'ebastien Gou\"ezel for illuminating
discussions on ergodic theorems. 

\bigskip

\end{document}